\newcommand{\x}{\boldsymbol{x}}
\newcommand{\q}{\boldsymbol{q}}
\newcommand{\p}{\boldsymbol{p}}
\newcommand{\xh}{\widehat{\x}}
\newcommand{\Sp}{\mathbb{S}^2}
\newcommand{\C}{\mathbb{C}}
\newcommand{\R}{\mathbb{R}}
\newcommand{\Z}{\mathbb{Z}}
\newcommand{\modification}[1]{{ #1}}
\newcommand{\modificationG}[1]{{ #1}}
\newcommand{\hsum}{ \sum  \hspace{-0.03in}{}^{\prime \prime}}
\newcommand{\dprime}{\hspace{-0.03in} {}^{\prime \prime}}
\newtheorem{proposition}{Proposition}[section]
\newtheorem{corollary}[proposition]{Corollary}
\newtheorem{lemma}[proposition]{Lemma}
\newtheorem{theorem}[proposition]{Theorem}
\newtheorem{hypothesis}{Hypothesis}
\newtheorem{notita}[proposition]{Remark}
\newenvironment{remark}{\begin{notita}\rm}{\hfill$\Box$\\[0.5ex]\end{notita}}
\newenvironment{proof}{{\it Proof}. }{\hfill$\Box$\\[0.5ex]}
\title{Sobolev estimates for constructive uniform-grid FFT   interpolatory approximations of spherical functions}
\author
{V. Dom\'{\i}nguez\thanks{
Departamento Ingenier\'ia Matem\'atica e Inform\'atica,  Universidad P\'ublica
de Navarra, Campus de Tudela, 31500, Tudela, Spain.
\hspace{0.01in} {\tt (victor.dominguez@unavarra.es)}}
\and
M. Ganesh\thanks{Department of Applied Mathematics \& Statistics,
Colorado School of Mines,
Golden, CO 80401. ~~~~~~~~~~~~{\tt (mganesh@mines.edu)}}
}
\date{}
\begin{document}
\maketitle 
\begin{abstract}
The fast Fourier transform (FFT) based matrix-free ansatz interpolatory  approximations 
of periodic functions are fundamental for efficient realization in several applications.
In this work we design, analyze, and implement similar constructive  interpolatory  approximations 
of  spherical functions, using samples of the unknown functions at the poles and at  the uniform  
spherical-polar grid locations $(\frac{j\pi}{N}, \frac{k \pi}{N})$,  
for $j = 1, \dots, N-1,~k=0, \dots, 2N-1$. 
The spherical matrix-free interpolation operator  range  space consists of a selective subspace of  two dimensional trigonometric polynomials 
which are rich enough to contain all spherical polynomials of  degree less than $N$.   
Using the ${\mathcal{O}}(N^2)$ data, the spherical interpolatory approximation is  
efficiently constructed by applying the FFT techniques (in both azimuthal and latitudinal variables) with only ${\mathcal{O}}(N^2 \log N)$ complexity. We describe 
the construction details using  the FFT operators and  provide complete convergence analysis of the interpolatory approximation  in the Sobolev  space framework that are well suited for 
quantification of various   computer models. 
 
We prove that the rate of spectrally accurate  convergence of the interpolatory approximations in Sobolev norms (of order zero and one)  are similar (up to a log term) to that of  the best approximation in the finite dimensional ansatz space. Efficient interpolatory quadratures on the sphere are important for  several applications including radiation transport and wave propagation computer models. We use our matrix-free  interpolatory approximations to construct robust  FFT-based quadrature rules for a wide class of non-, mildly-, and strongly-oscillatory  integrands on the sphere. We  provide numerical experiments to demonstrate  fast evaluation of the algorithm and   various theoretical results presented in the article.  
\end{abstract}

\paragraph{Key words:} Interpolation, Spherical functions, Sobolev norms, Cubature, Spherical integrals

\paragraph{AMS subject classification:} 42A15, 65D32, 33C55

\newpage 
\section{Introduction}
Approximation of  functions defined on the sphere is important for realization and understanding of various  processes described in the spherical-polar coordinate system. In particular,  approximation of an unknown spherical function with the  requirement of  exactly reproducing the function at certain
locations on the sphere, namely the interpolatory approximation, plays an important role in designing 
efficient  discrete computer models of various continuous systems.

A key tool for several large scale simulations  is the  
FFT-based   representation  (in polar coordinates) of the polynomial  interpolatory approximation of a
function defined on the circle (and in general any periodic function). The polar-coordinate/periodic case analytical
interpolatory representation facilitates fast construction of the approximation without the need to solve any 
matrix system of  interpolation constraint equations.

\modificationG{For a preferred set of   points on the sphere,  approximations of spherical functions belong to  either the non-interpolatory class 
or the interpolatory class for the set. Construction of each of these approximations can be further subdivided into either  the matrix-free class or  that require solutions of  linear algebraic systems. 
The set of points, especially for discretizing continuous systems based on differential equations (with a known source function), can be chosen for
efficiently setting up discrete computer models. For experimental data based approximations, the set of observation points are in practice 
predetermined and such data in general include noise. 

In order to avoid data sensitivity with respect to the noise, it is efficient to choose non-interpolatory class of approximations. 
There is a large literature on non-interpolatory approximations, depending on whether
the data observation points are scattered or can be chosen by the user, see for 
example~\cite{rep1, rep2, GaMhaqint:2006, rep3, rep4, rep5, rep6, rep7, rep8, rep9} and references therein.
Among these, hyperinterpolation approximations~\cite{rep2, rep4, rep7, rep8} are matrix-free and these are  global spherical polynomial
approximations with spherical harmonic Fourier coefficients (integrals on the sphere)  further approximated by a combination
of quadratures with certain degree of precision.  Quadrature-free quasi-interpolatory approximations can also be 
constructed~\cite{GaMhaqint:2006} and these are in particular suitable  for a class of scattered data.

The interpolatory class approximations have the advantage of being equal to a 
known function at all points in the set. This is in particular ideal for setting up scientific computing models governed by
differential equations with known source functions. Construction of the set  of  interpolation points and associated interpolatory
approximations is essential to develop computer models based on the collocation method, see~\cite{Cory:14, GaGrSi:1998} and references therein.

As described later in this section, our interest is on efficiently simulating partial differential equations with applications to wave propagation
and radiation transport models. Such simulations substantially benefit from the collocation method based computer models, with a fast method
to compute approximations. Matrix-free interpolatory approximations for the collocation method can be
efficiently built into the computer models, without solving any algebraic system to setup the discrete collocation system.
The FFT based evaluations of approximations  are needed for large scale simulations and precise quantification of accuracy of approximations
in Sobolev spaces is crucial in the mathematical analysis of the discrete models. The main focus of this article is on developing, analyzing,
implementing such matrix-free spherical interpolatory approximations.
}

A general approach in approximation theory is to seek a solution 
(that satisfy certain modeling constraints) in the space of polynomials.
Within the space of spherical polynomials,
it is an open problem to construct  such a powerful matrix-free 
representation of interpolatory approximations of  spherical functions. Indeed,
 if  a standard constraint that the spherical
interpolation operator (with truncated Fourier series ansatz) should  exactly reproduce  polynomials of degree, say $N\geq 3$, is imposed,  then 
it is impossible to construct a matrix-free polynomial interpolatory approximation~\cite{sloan:1}. 
This important two decade old work of Sloan~\cite{sloan:1} resulted in addressing several theoretical 
and practical questions, including efficient design of points on the sphere,  
\modificationG{see~\cite{atk_book,  sloan:3, reimer:00, sloan:2}} and extensive references therein.
It is still an open problem to prove the numerically observed $\mathcal{O}(N)$ Lebesgue constant growth  of some of the very efficient matrix-dependent spherical polynomial \modificationG{interpolation} operators~\cite{sloan:2}.

As discussed in detail in~\cite{atk_book, sloan:3, sloan:2}, the quality of spherical interpolatory approximation
(determined by the Lebesgue constant of the interpolation operator) is important. 
Further, mathematically
establishing  error estimates of the approximation is crucial for quantifying the validity of various computer models
that use such approximations. For applications, in addition to providing a fast   procedure for evaluating interpolatory
spherical approximations, it is important to prove associated error estimates in the $L^2$ and Sobolev (energy)  norms.
This is because robust error estimates in various approximate computer models  are usually established in such norms. Our main focus in this
article is on  such practical (matrix-free and FFT-based) considerations and associated robust mathematical analysis.
To this end,  we  do not require that the interpolatory spherical approximations need to  be in the space of spherical polynomials. 

	In~\cite{DoGa:2012,  GaGrSi:1998, GaMha:2006} a finite dimensional  space  $\chi_N$ (containing the space of spherical polynomials of degree less than $N$) was introduced.   
Well-posedness of the $\chi_N$-space based interpolation problem was established 
in~\cite{DoGa:2012}, using equally spaced  $2N$  azimuthal angles in $[0, 2 \pi)$ and {\em arbitrary} $N+1$ elevation latitudinal angles in $[0, \pi]$  so that the total number of interpolation points on the sphere
(including the north and south poles) is equal to the dimension of $\chi_N$. 

For the special choice of the $N+1$ non-uniform latitudinal angles  in $[0, \pi]$  that are 
based on Gauss-Lobatto points, 
as shown in~\cite{GaMha:2006}, the 
$\chi_N$-space interpolation problem is matrix-free and Sobolev error estimates for this spherical approximation  
was proved in~\cite{DoGa:2012}. If the $N+1$  latitudinal angles in $[0, \pi]$ 
are equally spaced, then the $\chi_N$-space 
uniquely solvable  interpolation problem was also shown to be matrix-free in~\cite{GaMha:2006}  
and the growth of the Lebesgue constant of this interpolation operator is only $\mathcal{O}(\log^2 N)$. This 
Lebesgue constant growth (and hence  error estimates in the uniform norm) 
was proved in~\cite{GaGrSi:1998, GaMha:2006}. As demonstrated by Sloan and
Womersley in~\cite{sloan:2} for benchmark
smooth and non-smooth functions, this non-polynomial  interpolatory spherical approximation,
with proven optimal Lebesgue constant, perform better than  several
matrix-dependent polynomial interpolatory approximations. 

Practical construction and analysis of the matrix-free interpolation operator in this article is completely different from that in~\cite{GaMha:2006}.
The main aim of this article is on the  efficient construction of uniform-grid interpolatory spherical
approximation  using only the FFT operators and to provide robust mathematical  analysis for quantifying the
interpolatory approximations in the Sobolev norm.

This article is the final of the three part constructive approximation theory 
and implementation work by Ganesh et al.~\cite{GaMha:2006, DoGa:2012} on the non-polynomial range space $\Sp$ interpolation framework introduced  in~\cite{GaGrSi:1998} (for  a 3D potential theory computer model).  Our new FFT-based construction and  \modification{Sobolev} space analysis presented in this article have potential applications in various large scale computer models  that require approximation of spherical functions.  

In particular, in our future work, we shall focus on two important classes of  specific applications of the interpolatory approximations
developed in this article:
(i) Deterministic and stochastic three dimensional wave propagation models, for evaluation of statistical quantities and 
uncertainty quantification in multiple particle configurations~\cite{GH:09, GH:11, GH:13, GH:14}; and
(ii) Advanced radiation transport (RT) computer models~\cite{transport_book, nuclear_book}.

The acoustic and electromagnetic wave propagation
Galerkin computer models developed by Ganesh and Hawkins~\cite{GH:09, GH:11, GH:13, GH:14} depend on  {\em local} spherical-polar  coordinate system based approximations (of surface currents and  integrals). These coordinate systems are  
imposed either on various patches on the surface of a single scatterer~\cite{GH:11} or  on individual particles in multiple particle configurations~\cite{GH:09,  GH:13, GH:14}. The future advanced algorithms and analysis 
for the three dimensional wave propagation models 
will be based on the  interpolatory  collocation version of deterministic and stochastic 
algorithms in~\cite{GH:09, GH:11, GH:13, GH:14}. These algorithms  will also require efficient interpolatory cubature rules for
medium to highly oscillatory integrals on the sphere.

The linear RT equation (RTE) poses a significant computational challenge, even for the next generation of super computers, 
because of the high-dimensional phase space on which it is posed. In general, the solution of the RTE
(an integro-differential equation) is a function of seven independent variables: one temporal variable, three spatial variables, one energy variable and two angular spherical-polar coordinate variables (describing the direction of radiation motion). The 
integral part of the RTE is an integral on the sphere and, in practice,  integrands with limited smoothness properties on
the sphere  (similar to functions in $\chi_N$). The industrial standard approach hitherto is to apply cubature on the
sphere with certain symmetry properties, such as that in~\cite{Cory:09}. However,
a recent derivation~\cite{Cory:14} demonstrates that interpolatory approximation based cubature  on the sphere are efficient. As discussed in the 
conclusion section in~\cite{Cory:14},
solutions to the RTE in practice are poorly behaved in the angular variables. Hence interpolatory approximations
and associated interpolatory cubature on the sphere based on approximations in the non-polynomial space $\chi_N$
will facilitate developing future advance RT computer models. 

In addition to the requirement of interpolatory  approximations  in both the above classes of applications, an important common tool required in \modification{this} future application  work is  an efficient FFT evaluation based interpolatory cubature  on the sphere with  non-, mildly-, and strongly-oscillatory integrands and  quantify the error in such cubature rules for integrands with limited smoothness properties. 
\modificationG{This article is structured as follows.}
After developing (i) an FFT-based interpolatory approximation in Section~\ref{sec:int_app}; (ii) introducing a functional 
framework in Section~\ref{sec:3}, \modificationG{based on Sobolev space decomposition~\cite{DoHeSa:2009}}; and (iii) proving the quality of our spherical interpolatory approximations in Section~\ref{sec:err_est},  we develop an efficient FFT-based interpolatory cubature on the sphere with error estimates in Section~\ref{sec:quad}. Numerical results in Section~\ref{sec:num_exp} (and Appendix~\ref{app:hyp_app}) demonstrate  various constructive and theoretical results developed and proved in this article and efficiency over a 
recent matrix-free interpolatory construction~\cite{DoGa:2012}.
We conclude this article in Appendix~\ref{sec:app_proof_sec_3} with proofs of some technical results
stated in Section~\ref{sec:3}. This will also be of independent interest for analysis/applications on 
rotationally invariant manifolds~\cite{GT:14}.

\section{An interpolatory approximation of spherical functions}\label{sec:int_app}

Let  $\Sp$ be the unit sphere in $\R^3$ parameterized, for $\xh \in \Sp$,  using the standard convention: \begin{equation}
\label{eq:x}
 \xh = \p(\theta,\phi):=(\sin\theta\cos\phi,\sin\theta\sin\phi,\cos\theta), \qquad \theta,\phi\in\R. 
\end{equation}
For any continuous \modificationG{ scalar-valued function} $F^\circ$ defined on the sphere, we  denote
$F:=F^\circ \circ \p$ and observe that 
\begin{equation}
 \label{eq:01}
 F(\theta,\phi)=F(\theta+2\pi,\phi)=F(\theta,\phi+2\pi),\quad 
 F(\theta,\phi)=F(-\theta,\phi+\pi), \quad \forall \theta,\phi\in\R. 
\end{equation}
Conversely, for any continuous \modificationG{scalar-valued function} $F$ on $\R^2$ satisfying \eqref{eq:01}, there exists a
unique associated function $F^\circ$ on the sphere. Motivated by this observation, we define the space
\begin{equation}
 {\cal C}:=\left \{F:\mathbb{R}^2\to\mathbb{C}\ :\ F \text{ is continuous and satisfies \eqref{eq:01}} \right \}
 \label{eq:02}
\end{equation}
which in view of \eqref{eq:01} can be identified with ${\cal C}(\Sp)$, the space of
complex valued continuous  \modificationG{scalar-valued functions}  on the unit sphere. 

In this work we will introduce  a trigonometric interpolant for functions $F^{\circ}$ on ${\cal C}(\Sp)$ 
using the following details: For $N\in\mathbb{N}$ with $N \geq 2$,  consider the equally spaced grid points 
\[
 \theta_j=\frac{j\pi}{N},\quad \phi_k =\frac{k\pi}{N},\qquad \quad j,k\in\mathbb{Z}. 
\]
We recall that, for {\em any} $\phi \in \mathbb{R}$,  the north and south poles are respectively 
$\p(\theta_0, \phi) = {\bf n}$ and $\p(\theta_N, \phi) = {\bf s}$.
Using  \eqref{eq:x},  the parametrized uniform grid  
\begin{equation}\label{eq:uni_grid}
\mathcal{G}_N = \left\{(\theta_j,\phi_k): j = 0, \dots, N,  k = 0, \dots, 2N-1 \right\}
\end{equation}
in $[0, \pi] \times [0, 2 \pi)$  corresponds to a grid of $2N(N-1)+2$ distinct points on the sphere. We also note that,
similar to \eqref{eq:01}, we have 
\begin{equation}\label{eq:symmetry}
 \p(\theta_j,\phi_k)=\modification{
 \p(\theta_{j+2N},\phi_{k})=
 \p(\theta_j,\phi_{k+2N})},\quad 
 \p(\theta_j,\phi_k)= \p(\theta_{j+N},\phi_{k-N}). 
\end{equation}
It is convenient to introduce the space of even and odd functions
\begin{eqnarray}
 \mathbb{D}_N^{\rm e}&:=&{\rm span}\,\langle \cos j\theta\ :\ j=0,\ldots,
N\rangle
 ={\rm span}\,\langle \cos^j\theta\ :\ j=0,\ldots,
N\rangle\\
 \mathbb{D}_N^{\rm o}&:=&{\rm span}\,\langle \sin j\theta\ :\ j=1,\ldots,
N+1\rangle={\rm span}\,\langle \sin\theta \cos^j\theta\ :\ j=0,\ldots,
N\rangle,
\end{eqnarray}
and then, as in~\cite{DoGa:2012,  GaGrSi:1998, GaMha:2006},  we consider a $2N^2-2N+2$ 
dimensional subspace of ${\cal C}$, defined as
\begin{eqnarray}
 \chi_N&:=&
\bigg\{p_0(\theta)+\!\!\sum_{ -N <m \le N \atop  \text{even }m\ne0}\!\!\!\!
\sin^2\theta\, p_m(\theta)\exp({\rm i}m\phi) +\!\!
\sum_{ -N <m \le N\atop   \text{odd } m}\!\!
  p_m(\theta)\exp({\rm i}m\phi)\ : \nonumber\\
&&
 \ \hspace{3cm}\ p_0\in {\mathbb D}^{\rm e}_N,\ 
  p_{2\ell}\in
{\mathbb D}^{\rm e}_{N-2},\ \ell \ne 0, \ p_{2\ell+1}\in
{\mathbb D}^{\rm o}_{N-2}\bigg\}\label{eq:chin}\\
&=& \bigg\{\sum_{ -N  <m \le N\atop  \text{even }m}\!\!\!\!
  p_m(\theta)\exp({\rm i}m\phi) +\!\!
\sum_{ -N <m \le N\atop   \text{odd } m}\!\!
  p_m(\theta)\exp({\rm i}m\phi)\  : \nonumber\\
&&
 \ \hspace{3cm}\ p_{2\ell}\in {\mathbb D}^{\rm e}_N,\ 
  p_{2\ell}(0)=p_{2\ell}(\pi)=0 \text{ for $\ell \ne 0$}, \ p_{2\ell+1}\in
{\mathbb D}^{\rm o}_{N-2}\bigg\}.\ \quad\label{eq:chin2}
\end{eqnarray} 
The equality between \eqref{eq:chin} and \eqref{eq:chin2}  follows from the fact that  for $p\in\mathbb{D}_N^{\rm e}$
\[
p(0)=p(\pi)=0\quad \Longleftrightarrow\quad p (\theta)=\sin^2\theta\
q(\theta),\qquad\text{with } q\in\mathbb{D}^{\rm e}_{N-2}.
\]
We refer to \cite[Section~2]{GaGrSi:1998}  for details of arriving at the subspace 
$\chi_N$ of ${\cal C}$ from the standard trigonometric polynomial two dimensional Fourier  approximation space on $[0, 2\pi] \times [0, 2\pi]$.

It is easy to check that any function $F_N\in \chi_N$ satisfies
\eqref{eq:01}. In other words, 
the elements of $\chi_N$  can be identified with  continuous functions on
the sphere.  Next we consider an interpolation
problem with $2N^2-2N+2$ interpolatory points on the unit sphere. 

The spherical $\chi_N$-interpolatory approximation  problem  is defined as follows: For any $F\in{\cal C}$, 
\begin{equation} \label{eq:interpolant}
\text{find}~ {\cal Q}_NF \in \chi_N,  \,\quad \text{such that}\quad {\cal
Q}_NF(\theta_j,\phi_k)=F(\theta_j,\phi_k),\quad j=0,\ldots,N,\quad k=0,\ldots,
2N-1. 
\end{equation}
In \cite[Proposition~1]{DoGa:2012} we  proved that the interpolation problem on $\chi_N$, 
with  the $2N$ uniform grid  azimuthal angles $\phi_k$ and {\em arbitrary} $N+1$ 
latitudinal points $\theta \in [0, \pi]$,  is uniquely solvable and hence ${\cal Q}_N$ reproduces
functions in $\chi_N$. 
 In fact the unique solution to the
interpolation problem can be expressed analytically without the need to solve
any linear system. That is, \eqref{eq:interpolant}  is a matrix-free interpolation problem. 

In this article, we present  a constructive (FFT operators based) matrix-free
proof of this result, adapted to the  particular choice of   uniform grid latitudinal points, for two reasons: (a) it shows, from a practical point of view,
how the interpolant can be fast computed employing only the FFT techniques; (b) it will provide
an indication on how the analysis of the convergence of the interpolatory approximation
${\cal Q}_NF$ to $F$ could be carried out. The FFT friendly uniform grid latitudinal points provide a challenging 
Sobolev space analysis framework on the sphere compared to that with latitudinal points that are zeros of certain orthogonal polynomials~\cite{DoGa:2012}. Our analysis in this article 
leads to an interesting  trigonometric polynomials based one dimensional inequality  conjecture 
(that we could numerically verify for  practically useful cases). 

Next we consider some FFT based operators that we use in the construction of the matrix-free
interpolatory approximation. Let  ${\bf DST}_N:\mathbb{C}^{N-1}\to \mathbb{C}^{N-1}$ and 
${\bf DCT}_{N}:\mathbb{C}^{N+1}\to \mathbb{C}^{N+1}$  
 denote the  discrete sine and cosine  transform ({\em of type I}) defined as 
\begin{align}
\left({\bf DST}_N({\bf y})\right)_j&:=    
\sum_{k=1}^{N-1} y_k \sin\left(\frac{kj\pi}{N}\right) ,&& j=1,\ldots,N-1,  \label{eq:dst_op}\\
\left({\bf DCT}_N({\bf x})\right)_j&:=
\sum_{k=0}^{N}\dprime  x_k \cos\left(\frac{kj\pi}{N}\right) ,&& j=0,\ldots,N,  \label{eq:dct_op}
\end{align}
where $\hsum$ means the usual summation with only half the first and last terms included.
These operators can easily be constructed using the standard FFT operator.
Let ${\bf iDST}_N$ and ${\bf iDCT}_N$ respectively denote the inverse discrete sine and cosine transforms. We will also need  the inverse of the  discrete Fourier transform ${\bf iFFT}_{M}:\mathbb{C}^{M}\to
\mathbb{C}^{M}$,
\begin{eqnarray}
\left({\bf iFFT}_M({\bf z})\right)_j&:=&\frac{1}{M} \sum_{k=0}^{M-1} z_k \exp\Big(-\frac{2kj\pi\;\mathrm{i}}{M}\Big),\quad j=0,\ldots,M-1, \label{eq:ifft_op}
\end{eqnarray} 
with the associated forward  discrete Fourier transform denoted by ${\bf FFT}_{M}$.
For construction of the matrix-free interpolation operator we will use only the above explicitly defined operators. Their inverses are defined in the following proposition and are used to prove the properties of  the 
spherical interpolatory operator ${\cal Q}_N$  in \eqref{eq:interpolant}.

\begin{proposition}\label{prop:prop01}
For any  given data 
\[
 F_{j,k}=F(\theta_j,\phi_k),\quad j=0,\ldots,N,\quad k=0,\ldots,
2N-1, 
\]
representing a function $F \in {\cal C}$ at the grid locations $\mathcal{G}_N$ in~\eqref{eq:uni_grid}, 
the well defined interpolatory approximation in \eqref{eq:interpolant} can
be efficiently constructed using the  FFT appropriate matrix-free ansatz
\begin{equation}
\begin{split}
 ({\cal Q}_NF)(\theta,\phi)&=\frac{2}{N}   \sum_{0\le m\le  N/2} \hspace{-0.2in} 
 \hspace{0.2in}   \bigg[\sum_{\ell=0}^{N} \dprime \alpha_\ell^{2m}\cos \ell \theta \bigg]\exp(2m\mathrm{i}\phi)\\
 &\quad +
  \frac{2}{N} \hspace{-0.1in} \sum_{-N/2<m\le -1} \hspace{-0.2in} 
 \hspace{0.2in}   \bigg[\sum_{\ell=0}^{N} \dprime \alpha_\ell^{2m+2N}\cos \ell \theta \bigg]\exp(2m\mathrm{i}\phi)
  \\
&\quad +
 \frac{2}{N} \hspace{-0.05in}  \sum_{1 \leq  n \leq  (N+1)/2}  
 \bigg[\sum_{\ell=1}^{N-1} \beta_\ell^{2n-1}\sin \ell \theta\bigg]\exp((2n-1)\mathrm{i}\phi), \\
&\quad +
 \frac{2}{N} \hspace{-0.05in}  \sum_{(-N+1)/2< n \leq  0}  
 \bigg[\sum_{\ell=1}^{N-1} \beta_\ell^{2n-1+2N}\sin \ell \theta\bigg]\exp((2n-1)\mathrm{i}\phi),
  \end{split}\label{eq:01:Prop01}
  \end{equation}
 where the coefficients $\left(\alpha^{2m}_\ell \right)_{\ell=0}^N $ and 
  $\left(\beta^{2n-1}_\ell \right)_{\ell=1}^{N-1} $, for $m=0,\ldots,N-1$ and $n=1,\ldots,N$,  can be computed using the data and the following fast algorithm:
\begin{enumerate}
\item Compute inverse transform data
\begin{equation}\label{eq:ifft}
 (f_{j,m})_{m=0}^{2N-1}:={\bf iFFT}_{2N}((F_{j,k})_{k=0}^{2N-1} ),\quad j=0,\ldots,N.
\end{equation}
\item 
Then compute the coefficients in~\eqref{eq:01:Prop01} using the sine and cosine
transforms as 
\begin{subequations}\label{eq:dsc-dst}
\begin{align}
\left(\alpha^{2m}_\ell \right)_{\ell=0}^N&:=  {\bf DCT}_N((f_{j,2m})_{j=0}^{N}),&& m=0,\ldots, N-1,\label{eq:dct}\\
\left(\beta^{2n-1}_\ell\right)_{\ell=1}^{N-1}&:= {\bf DST}_N((f_{j,2n-1})_{j=1}^{N-1}),&& n=1,\ldots, N.\label{eq:dst}
\end{align}
\end{subequations}
\end{enumerate}

\end{proposition}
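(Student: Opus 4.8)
The plan is to diagonalize the interpolation conditions in the azimuthal variable by a discrete Fourier transform, which turns \eqref{eq:interpolant} into $2N$ uncoupled one–dimensional interpolation problems, one per azimuthal frequency, and then to recognize each of these as a type--I discrete cosine or sine interpolation problem whose solution operator is, up to the normalization $2/N$, the transform ${\bf DCT}_N$ or ${\bf DST}_N$ appearing in the algorithm. The well-posedness claimed implicitly in the statement will then drop out as well, since each one–dimensional problem is uniquely solvable.

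First I would write a generic element of $\chi_N$, using the representation \eqref{eq:chin2}, as $F_N(\theta,\phi)=\sum_{-N<m\le N}p_m(\theta)\exp({\rm i}m\phi)$ with $p_0\in\mathbb{D}^{\rm e}_N$, with $p_{2\ell}\in\mathbb{D}^{\rm e}_N$ and $p_{2\ell}(0)=p_{2\ell}(\pi)=0$ for $\ell\neq0$, and with $p_{2\ell+1}\in\mathbb{D}^{\rm o}_{N-2}$. For a fixed latitude index $j$, evaluation at $\phi_k=k\pi/N$ gives $F_N(\theta_j,\phi_k)=\sum_{-N<m\le N}p_m(\theta_j)\exp({\rm i}m\phi_k)$; since $(-N,N]$ is a complete residue system modulo $2N$, the matrix $[\exp({\rm i}m\phi_k)]_{k,m}$ is a column permutation of the invertible $2N\times 2N$ Fourier matrix, so the conditions $F_N(\theta_j,\phi_k)=F_{j,k}$, $k=0,\dots,2N-1$, are equivalent to $p_m(\theta_j)=g_{j,m}$ for all $m\in(-N,N]$, where $g_{j,m}$ is the $m$-th discrete Fourier coefficient of the row $(F_{j,k})_k$. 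Step~1 of the algorithm produces exactly these numbers: $g_{j,m}=f_{j,m}$ for $0\le m\le N$ and, by $2N$-periodicity of the DFT, $g_{j,m}=f_{j,m+2N}$ for $-N<m<0$, which explains the $2m$ versus $2m+2N$ superscripts in \eqref{eq:01:Prop01}.

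Next I would solve, for each $m$, the one–dimensional problem $p_m(\theta_j)=g_{j,m}$, $j=0,\dots,N$, at the nodes $\theta_j=j\pi/N$. For even $m=2\ell$ the relevant space is $\mathbb{D}^{\rm e}_N$ and the interpolation of $N+1$ values by a cosine polynomial of degree $N$ is uniquely solvable, with coefficients given by ${\bf DCT}_N$, using the standard type--I relation ${\bf iDCT}_N=\tfrac2N{\bf DCT}_N$; this yields the bracketed factors of the first two sums. For $\ell\neq0$ the interpolant must additionally lie in the constrained subspace $\{p\in\mathbb{D}^{\rm e}_N:\ p(0)=p(\pi)=0\}$, and this is where the hypothesis that the data come from a continuous function on the sphere enters: because $\p(0,\phi)\equiv{\bf n}$ and $\p(\pi,\phi)\equiv{\bf s}$, the rows $(F_{0,k})_k$ and $(F_{N,k})_k$ are constant, so $g_{0,m}=g_{N,m}=0$ for every $m\not\equiv0\pmod{2N}$, whence the cosine interpolant automatically satisfies $p_{2\ell}(0)=g_{0,2\ell}=0=g_{N,2\ell}=p_{2\ell}(\pi)$ and therefore belongs to $\chi_N$. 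For odd $m=2n-1$ the space is $\mathbb{D}^{\rm o}_{N-2}=\mathrm{span}\langle\sin\ell\theta:\ell=1,\dots,N-1\rangle$; both sides of the interpolation condition vanish at the two poles (the basis functions vanish there and $g_{0,2n-1}=g_{N,2n-1}=0$), so only the $N-1$ interior conditions at $\theta_1,\dots,\theta_{N-1}$ survive, and these are uniquely solved by ${\bf DST}_N$ via ${\bf iDST}_N=\tfrac2N{\bf DST}_N$, giving the $\beta$-brackets.

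Finally, assembling $F_N=\sum_m p_m(\theta)\exp({\rm i}m\phi)$, every $p_m$ lies in the space prescribed by \eqref{eq:chin2}, so $F_N\in\chi_N$, and $F_N$ meets all interpolation constraints; conversely, any $\chi_N$-solution of \eqref{eq:interpolant} has azimuthal Fourier coefficients solving the same uniquely solvable one–dimensional problems, so ${\cal Q}_NF=F_N$, which is precisely \eqref{eq:01:Prop01} (alternatively one invokes the well-posedness of \cite[Proposition~1]{DoGa:2012}, the nodes $0=\theta_0<\dots<\theta_N=\pi$ being an admissible choice of $N+1$ latitudinal points). The computation is largely bookkeeping; the one genuinely delicate point is the pole handling — verifying that the \emph{unconstrained} ${\bf DCT}_N$ output for the nonzero even modes nonetheless lands in the constrained subspace defining $\chi_N$ — together with keeping the $2/N$ type--I normalizations and the $m\mapsto m+2N$ frequency wrap-around consistent throughout.
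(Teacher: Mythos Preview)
Your proposal is correct and follows essentially the same approach as the paper: both arguments hinge on the azimuthal Fourier decoupling, the observation that the constant pole rows force $f_{0,m}=f_{N,m}=0$ for $m\neq 0$ (so the unconstrained ${\bf DCT}_N$ output automatically satisfies the endpoint constraints of $\chi_N$), and the type--I inversion identities ${\bf iDCT}_N=\tfrac{2}{N}{\bf DCT}_N$, ${\bf iDST}_N=\tfrac{2}{N}{\bf DST}_N$. The only difference is presentational: the paper defines the candidate via \eqref{eq:01:Prop01} and then verifies $\chi_N$-membership and the interpolation conditions directly, whereas you start from a generic element of $\chi_N$ and derive the coefficients; the mathematical content is the same.
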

\begin{proof} With  coefficient vectors ${\boldsymbol \alpha}$ and ${\boldsymbol \beta}$ as in 
\eqref{eq:01:Prop01}-\eqref{eq:dsc-dst}, we first define,
for $0\le m\le  N/2$ and   $1\le  n\le  (N+1)/2$, the
 even and odd functions: 
\[
 p_{2m} (\theta)= \frac{2}{N}\sum_{\ell=0}^{N}\dprime \alpha_\ell^{2m}\cos \ell \theta \in \mathbb{D}_N^{\rm e},\qquad \qquad 
 p_{2n-1}(\theta)= \frac{2}{N}\sum_{\ell=1}^{N-1} \beta_\ell^{2n-1}\sin \ell\theta \in   \mathbb{D}_{N-2}^{\rm o}, 
\]
and similarly define for $-N/2< m\le  -1$ and  $(-N+1)/2<n\le 0$ the even and odd functions:
\[
 p_{2m} (\theta)= \frac{2}{N}\sum_{\ell=0}^{N}\dprime \alpha_\ell^{2m+2N}\cos \ell \theta \in \mathbb{D}_N^{\rm e},\qquad \qquad 
p_{2n-1}(\theta)= \frac{2}{N}\sum_{\ell=1}^{N-1} \beta_\ell^{2n-1+2N}\sin \ell\theta \in   \mathbb{D}_{N-2}^{\rm o}.
\]
In particular, for $0\le m\le  N/2$,  $1\le  n\le  (N+1)/2$, 
$j = 0, \dots, N$, and  $k = 1, \dots, N-1$, we have 
\[
 p_{2m} (\theta_j)= \Big({\bf iDCT}_N(\alpha_{\ell}^{2m})_{\ell=0}^{N}) \Big)_j, 
 \qquad \qquad 
 p_{2n-1}(\theta_k)= \Big({\bf iDST}_N(\beta_{\ell}^{2n-1})_{\ell=1}^{N-1}) \Big)_k.
 \]
Using \eqref{eq:chin2} and \eqref{eq:01:Prop01},  to prove that ${\cal Q}_NF\in\chi_N$, it is sufficient 
to show  that $p_{m}(0)=p_m(\pi)=0$ for  $m\ne 0$.
Applying property \eqref{eq:01}, we obtain 
\[
 F_{0,k} =F(0,\phi_k)=F(0,\cdot) ,\quad  F_{N,k} =F(\pi,\phi_k)=F(\pi,\cdot),\qquad\text{for }
 k=0,\ldots,2N-1, 
\]
and therefore, $(f_{0,m})_{m=0}^{2N-1}$ and $(f_{N,m})_{m=0 }^{2N-1}$ are the result of applying 
 the iFFT operator to constant vectors. Thus,
\begin{equation} \label{eq:01:ProofProp1}
 f_{0,m} =\begin{cases}
                   F(0,0), &\text{if $m=0$},\\
                   0,& \text{otherwise},
                  \end{cases}\qquad
 f_{N,m} =\begin{cases}
                   F(\pi,0), &\text{if $m=0$},\\
                   0,& \text{otherwise.}
                  \end{cases} \quad
 \end{equation}
 Furthermore taking into account \eqref{eq:01:ProofProp1}, we easily 
see that for $0\le m\le N/2$ it holds
\begin{eqnarray}\label{eq:02:ProofProp1}
 p_{2m}(\pi)&=&\frac{2}{N}\sum_{\ell=0}^{N}\dprime \alpha_\ell^{2m}\cos(\ell\pi) =   
 \Big({\bf iDCT}_N((\alpha_\ell^{2m})_{\ell=0}^N) \Big)_N\nonumber\\
 &=& \Big({\bf iDCT}_N({\bf DCT}_N((f_{j,2m})_{j=0}^{N})) \Big)_N
 =f_{N,2m}=\begin{cases}
                   F(\pi,0), &\text{if $m=0$},\\
                   0,& \text{otherwise.}
                  \end{cases}
\end{eqnarray}
For $-N/2\le m\le -1$, proceeding analogously we obtain 
\[
  p_{2m}(\pi)=\frac{2}{N}\sum_{\ell=0}^{N}\dprime \alpha_\ell^{2m+2N}\cos(\ell\pi) =  
  \Big({\bf iDCT}_N({\bf DCT}_N((f_{j,2m+2N})_{j=0}^{N})) \Big)_N
 =f_{N,2m+2N}=0.
\]

Similarly we derive 
\begin{equation}\label{eq:03:ProofProp1}
 p_{2m}(0)=f_{0, 2m}=\begin{cases}
                   F(0,0), &\text{if $m=0$},\\
                   0,& \text{otherwise}.
                  \end{cases}
 \end{equation}
Thus  ${\cal Q}_N F\in\chi_N$ and that
\begin{equation}\label{eq:04:ProofProp1}
  {\cal Q}_N F(\theta_j,\phi_k)=F(\theta_j,\phi_k),\quad  j\in\{0,N\},\quad k=0, \ldots, 
2N-1. 
 \end{equation} 
 To check that ${\cal Q}_N F$  interpolates $F$  at the rest of the grid points in $\mathcal{G}_N$,  we 
can use a similar argument. For $j=1,\ldots, N-1$ and $k=0,\ldots,2N-1$, using 
\[
\begin{split}
({\cal Q}_NF)(\theta_j,\phi_k)&=  \hspace{-0.1in} \sum_{-N/2<m\le  N/2} \hspace{-0.2in} 
 \hspace{0.2in} p_{2m} (\theta_j)  \exp(2m\mathrm{i}\phi_k) + 
   \hspace{-0.14in}  \sum_{(-N+1)/2< n \leq  (N+1)/2}  
 p_{2n-1}(\theta_j)
\exp((2n-1)\mathrm{i}\phi_k) \\
  &=\sum_{0\le m\le  N/2}   \Big({\bf iDCT}_N((\alpha_\ell^{2m})_{\ell=0}^N) \Big)_j \exp(2m\mathrm{i}\phi_k)\\
  &\quad + 
   \sum_{1\le n \leq  (N+1)/2} 
  \Big({\bf iDST}_N((\beta_\ell^{2n-1})_{\ell=1}^{N-1}) \Big)_j \exp((2n-1)\mathrm{i}\phi_k) \\
   &\quad +\sum_{-N/2<m\le  -1}   \Big({\bf iDCT}_N((\alpha_\ell^{2m+2N})_{\ell=0}^N) \Big)_j \exp(2m\mathrm{i}\phi_k) \\
  &\quad+ 
   \sum_{(-N+1)/2< n \leq 0} 
  \Big({\bf iDST}_N((\beta_\ell^{2n-1+2N})_{\ell=1}^{N-1}) \Big)_j \exp((2n-1)\mathrm{i}\phi_k)
  \\
  &=\sum_{n=0}^N   f_{j,n}\exp\Big(\frac{n\mathrm{i}\pi k}{N}\Big)+
   \sum_{n=-N+1}^{-1}   f_{j,n+2N}\exp\Big(\frac{n\mathrm{i}\pi k}{N}\Big)
\\
  &=\sum_{n=0}^N   f_{j,n}\exp\Big(\frac{n\mathrm{i}\pi k}{N}\Big)+
   \sum_{n=N+1}^{2N-1}   f_{j,n}\exp\Big(\frac{(n-2N)\mathrm{i}\pi k}{N}\Big)
  \\
 &=\sum_{n=0}^{2N-1}   f_{j,n}\exp\Big(\frac{n\mathrm{i}\pi k}{N}\Big)=\left({\bf FFT}_{2N}((f_{j,n})_{n=0}^{2N-1})\right)_k\\
 &=
  \left({\bf FFT}_{2N}\left({\bf iFFT}_{2N}\big((F_{j,k})_{k=0}^{2N-1}\big)\right)\right)_k
  =F_{j,k}=F(\theta_j,\phi_k), 
\end{split}
\]
where in the penultimate step  we have used 
$\exp(\frac{(n-2N)\mathrm{i}\pi k}{N})= \exp (\frac{n\mathrm{i}\pi k}{N})$.
Combining this result with \eqref{eq:04:ProofProp1}, we proved that the matrix-free representation 
in~\eqref{eq:01:Prop01} solves the interpolation problem~\eqref{eq:interpolant}.
The uniqueness of the interpolant follows either from similar arguments or 
as a consequence of the existence of the solution since the underlying matrix in 
the interpolation problem is square.
\end{proof}

\begin{remark}
It is easy to see that the matrix-free representation \eqref{eq:01:Prop01} also provides
fast evaluation in the azimuthal variable $\phi$, using the FFT. 
We note  that the process described in Proposition \ref{prop:prop01} does not 
need that $F\in{\cal C}$: It suffices $F$ to be a continuous function in $[0,\pi]\times [0,2\pi]$, but if $F\notin{\cal C}$, the interpolant is  a trigonometric polynomial which need not be in $\chi_N$.  
\end{remark}

\begin{remark}
Roughly speaking the process explained in Proposition \ref{prop:prop01} consists 
in applying the FFT to the matrix $F_{kj}$ by columns and then the 
DCT and DST to the even and odd rows respectively. Obviously, we can also revert the order of application 
of these transformations. In any case, the calculations are fast requiring only ${\cal O}(N^2\log N)$ operations
and are parallelizable. Moreover, the matrix-free representation can be exploited for performing 
fast evaluations of the interpolatory approximation for example, on dyadic grids, or by combining with 
appropriate piecewise polynomial interpolation. 
\end{remark} 

We conclude this section by presenting the main result of this article, namely 
the convergence of the matrix-free interpolatory approximation in Sobolev norms $\|\cdot\|_{{\cal H}^s(\Sp)}$ 
on the sphere for $s\in[0,1]$ with
the regularity of spherical functions to be approximated also  measured in Sobolev norms. We introduced these spaces, in terms 
of spherical harmonics,  in the next section. Before presenting the result, 
we need the following technical hypothesis. 
Although we do not have a proof of the hypothesis, at least for all practical cases, we have numerically verified   that  the hypothesis is true: In Appendix~\ref{hypo:01} we demonstrate that   the hypothesis is true for any integer 
$2 \leq N\le 2^{14}$. The $N = 2^{14}= 16,384$ case correspond to 
the spherical interpolation problem with over $500$ million data locations. Thus
we have verified the hypothesis for almost all practical application of the interpolant
studied in this article. In Appendix~\ref{hypo:01}, we provide details of how we numerically verified the hypothesis.

\begin{hypothesis}\label{hypo:01} \modificationG{({\bf Numerically verified in  Appendix~\ref{hypo:01}}):} \\
For each $j = 1, 2, 3$, there exists $c_{\rm H}^{(j)} <1$, independent of $N$, so that
\begin{equation}\label{eq:H:00}
 -2\int_{0}^\pi |p_N(\theta)|^2\cos(2N\theta)\sin \theta \,{\rm d}\theta\le 
 c_{\rm H}^{(j)}\int_{0}^\pi |p_N(\theta)|^2\sin \theta \,{\rm d}\theta,\qquad
\forall
p_N\in A_N^j, 
\end{equation}
where 
\begin{equation}\label{eq:H:01}
A_N^1:=\mathbb{D}_{N-2}^{\rm e}, \,\quad  A_N^2:=
\mathbb{D}_{N-2}^{\rm
o},\quad  \text{and} \quad A_N^3:=\{\sin^2(\theta)q_{N-2}(\theta) :\:
q_{N-2}\in
\mathbb{D}_{N-2}^{\rm e}\}\subset\mathbb{D}_N^{\rm e}.
\end{equation}
\end{hypothesis}
 
Now we state the main theoretical spectrally accurate convergence result of the article.
\begin{theorem}\label{theo:main} Suppose that Hypothesis \ref{hypo:01} holds. Then, for
$F \in {{\cal H}^\modification{t}}$ with \modification{$\modification{t}>5/2$}  there exists  $C_{\modification{t}}>0$ so that, for  $s\in[0,1]$,
\begin{equation}\label{eq:main_res}
 \|F-{\cal Q}_N F\|_{{\cal H}^s}\le C_r N^{s-\modification{t}}(\log N)^{s/2}\|F\|_{{\cal H}^\modification{t}}. 
\end{equation}
\end{theorem}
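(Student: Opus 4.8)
The strategy is the classical interpolation-operator argument: split the error as $F - \mathcal{Q}_N F = (F - P_N F) + \mathcal{Q}_N(P_N F - F)$, where $P_N$ is a near-best approximation to $F$ from $\chi_N$ (or from the space of spherical polynomials of degree $<N$, which sits inside $\chi_N$), and use that $\mathcal{Q}_N$ reproduces $\chi_N$. The first term is controlled by classical Jackson-type estimates on the sphere in the $\mathcal{H}^s$ scale: $\|F - P_N F\|_{\mathcal{H}^s} \lesssim N^{s-t}\|F\|_{\mathcal{H}^t}$, using the spherical-harmonic characterisation of $\mathcal{H}^s$ to be introduced in Section \ref{sec:3}. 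The whole difficulty is therefore concentrated in bounding the operator $\mathcal{Q}_N$ — more precisely, in obtaining a stability estimate of the form $\|\mathcal{Q}_N G\|_{\mathcal{H}^s} \lesssim (\log N)^{s/2}\,\|G\|_{\mathcal{H}^t}$ (or, better, in terms of a stronger uniform-type norm of $G$) for $G = P_N F - F$, which is small in high-order norms because $P_N F$ is a good approximation.

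First I would reduce everything to one-dimensional (latitudinal) estimates. Because the azimuthal grid is uniform with $2N$ points and $\mathcal{Q}_N$ acts in $\phi$ exactly as the trigonometric interpolation/DFT projection, the Fourier modes $\exp(\mathrm{i}m\phi)$ decouple: writing $G(\theta,\phi) = \sum_m g_m(\theta)\exp(\mathrm{i}m\phi)$, the interpolant $\mathcal{Q}_N G$ has $m$-th azimuthal block given by a one-dimensional operator acting on the aliased sum $\sum_{m'\equiv m} g_{m'}$, with the latitudinal operator being the $\mathbb{D}_N^{\mathrm e}$ cosine-interpolant (even $m$) or the $\mathbb{D}_{N-2}^{\mathrm o}$ sine-interpolant (odd $m$) at the points $\theta_j$. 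The $\mathcal{H}^s(\Sp)$ norm, via the spherical-harmonic decomposition recalled in Section \ref{sec:3}, is equivalent to a weighted sum over $m$ of one-dimensional Sobolev-type norms of the $g_m$ with the measure $\sin\theta\,\mathrm{d}\theta$ (this is the point of the Sobolev space decomposition of \cite{DoHeSa:2009} cited in the introduction). So the task becomes: bound the $L^2(\sin\theta\,\mathrm{d}\theta)$ and $H^1$-type norms of the one-dimensional cosine/sine interpolants in terms of correspondingly strong norms of the data function, uniformly in $m$.

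This is exactly where Hypothesis \ref{hypo:01} enters. The quadratic form $\int_0^\pi |p_N(\theta)|^2\sin\theta\,\mathrm{d}\theta$ is the natural $L^2$-on-the-sphere norm of a latitudinal polynomial, and when one expresses the discrete $\ell^2$ inner product $\tfrac{2}{N}\sum_j |p_N(\theta_j)|^2 \sin\theta_j$ (the quadrature underlying the DCT/DST) against the continuous one, the error term is precisely a term of the shape $-2\int_0^\pi |p_N|^2\cos(2N\theta)\sin\theta\,\mathrm{d}\theta$ coming from aliasing $\cos(2N\theta)$ against the constant. Hypothesis \ref{hypo:01} says this aliasing term is a strict fraction of the main term on each of the three relevant subspaces $A_N^1, A_N^2, A_N^3$ — i.e. the discrete norm is equivalent to the continuous $L^2(\sin\theta\,\mathrm{d}\theta)$ norm on $\chi_N$, uniformly in $N$. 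Granting this, the $s=0$ estimate follows: $\mathcal{Q}_N$ is $L^2(\Sp)$-bounded uniformly in $N$ on the relevant subspaces, the usual Lebesgue-type/orthogonality argument gives $\|\mathcal{Q}_N G\|_{L^2} \lesssim \|G\|_{L^2} + (\text{best approx error})$, and combined with the Jackson estimate for $P_N$ this yields $\|F - \mathcal{Q}_N F\|_{L^2} \lesssim N^{-t}\|F\|_{\mathcal{H}^t}$.

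For $s=1$ I would handle the latitudinal derivative and the azimuthal derivative separately. The azimuthal derivative just multiplies the $m$-th block by $\mathrm{i}m$ and commutes with the (per-mode) interpolation, so it costs nothing beyond the $s=0$ bound except summing $m^2$ weights, which is absorbed into $\|F\|_{\mathcal{H}^t}$ since $t>1$. The latitudinal derivative is where the $(\log N)^{1/2}$ is produced: differentiating the one-dimensional cosine/sine interpolant and using a Markov–Bernstein inequality on $[0,\pi]$ for trigonometric polynomials of degree $N$ against the weight $\sin\theta$, together with the fact that the data $G = P_N F - F$ decays like $N^{-t}$ pointwise after the near-best projection, one gets an $H^1$ stability constant growing like the square root of the Lebesgue constant of the univariate interpolation — and the $\mathcal{O}(\log N)$ Lebesgue constant of the type-I DCT/DST interpolation at uniform nodes gives the $(\log N)^{1/2}$. (The half-power, rather than a full $\log N$, comes from interpolating between the $s=0$ bound, which has no log, and an $s=1$ bound that carries one $\log N$; one can either prove the $s=1$ estimate directly and interpolate, or run the $s=1$ argument so the log only appears on the top-order piece.) Finally, the intermediate $s\in(0,1)$ case follows by operator interpolation between the $s=0$ and $s=1$ estimates on the Sobolev scale, which also explains the exponent $N^{s-t}(\log N)^{s/2}$.

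The main obstacle — indeed the only genuinely new analytic input, and the reason the hypothesis is isolated as such — is establishing the uniform-in-$N$ equivalence of the discrete and continuous weighted $L^2$ norms on the subspaces $A_N^j$, i.e. controlling the aliasing term in \eqref{eq:H:00}. The weight $\sin\theta$ (from the spherical surface measure) is what makes this nontrivial: without it, type-I DCT/DST quadrature on uniform nodes is exact for the relevant degrees and the constant is $1$; with it, one is asking that $\cos(2N\theta)$ not correlate too strongly with $|p_N(\theta)|^2\sin\theta$, which is plausible but resists an elementary proof. Everything downstream of the hypothesis — the per-mode decoupling, the Jackson estimates, the Markov–Bernstein step for the derivative, the operator interpolation — is standard; it is the quadrature-aliasing bound that would be the heart of a full proof, and the paper (correctly, I think) defers it to the numerically-verified Hypothesis \ref{hypo:01} and an accompanying conjecture.
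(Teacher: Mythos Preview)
Your overall architecture is right and matches the paper: decouple into Fourier modes in $\phi$, reduce the spherical estimate to per-mode one-dimensional estimates for the cosine/sine interpolants $\mathrm{q}_N^{\rm e},\mathrm{q}_N^{\rm o}$, feed Hypothesis~\ref{hypo:01} into a discrete--continuous $L^2(\sin\theta\,\mathrm{d}\theta)$ equivalence to get the $s=0$ bound with no log, and obtain $s\in(0,1)$ by Hilbert-scale interpolation between $s=0$ and $s=1$. The paper's top-level split is the three-term one \eqref{eq:01:proof:MainTheo} (one-dimensional interpolation error on ${\cal F}_mF$, aliasing piece $\rho_N^mF-{\cal F}_mF$, Fourier tail), but your two-term ``near-best $P_N$ plus stability of $\mathcal Q_N$'' split is equivalent in spirit and the paper in fact uses exactly that device (with the projections $\mathrm p_N^m$ of Proposition~\ref{lemma:01}) inside the proof of the one-dimensional Theorem~\ref{theo:qNmf_1}.

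Where your plan goes wrong is the $s=1$ analysis. You assert that the azimuthal derivative ``just multiplies the $m$-th block by $\mathrm{i}m$\ldots so it costs nothing beyond the $s=0$ bound except summing $m^2$ weights,'' and that the $(\log N)^{1/2}$ is produced by the latitudinal derivative via the DCT/DST Lebesgue constant. Both claims are incorrect. The $\mathcal H^1$ norm decomposes mode-by-mode as (Theorem~\ref{theo:equiv_norms})
\[
\|f\|_{W_m^1}^2 \ \simeq\ m^2\int_0^\pi \frac{|f(\theta)|^2}{\sin\theta}\,\mathrm d\theta \;+\; \int_0^\pi |f'(\theta)|^2\sin\theta\,\mathrm d\theta,
\]
so the azimuthal contribution is \emph{not} $m^2\|f\|_{L^2_{\sin}}^2$ but $m^2\|f\|_{L^2_{\sin^{-1}}}^2$. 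You therefore need stability of $\mathrm q_N^m$ in $L^2_{\sin^{-1}}$, not just $L^2_{\sin}$, and Hypothesis~\ref{hypo:01} alone does not give this. The latitudinal-derivative term $\|(\mathrm q_N^m f)'\|_{L^2_{\sin}}$ is in fact the easy one (Lemma~\ref{lemma:02}, via the inverse inequality); it carries no log. The $\sqrt{\log N}$ appears only in the odd-$m$ case, from bounding $\|\mathrm q_N^{\rm o}f\|_{L^2_{\sin^{-1}}}$ (Proposition~\ref{prop:02c}), and the mechanism has nothing to do with a Lebesgue constant or Markov--Bernstein: since $\mathrm q_N^{\rm o}f/\sin\theta\in\mathbb D_{N-2}^{\rm e}$ need not vanish at $0,\pi$, the endpoint contribution to the relevant quadrature must be controlled, and the sharp estimate (Lemma~\ref{lemma:aux5})
\[
|r_N(0)|^2+|r_N(\pi)|^2 \ \le\ C\,N^2\log N\ {\cal L}_N^1\big(|r_N|^2\sin(\cdot)\big),\qquad r_N\in\mathbb D_{N-2}^{\rm e},
\]
is what injects the logarithm. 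In the even-$m$ case $\mathrm q_N^{\rm e}f/\sin\theta\in\mathbb D_{N-2}^{\rm o}$ vanishes at the endpoints and no log appears (Proposition~\ref{prop:02b}). So the gap in your plan is the $L^2_{\sin^{-1}}$ stability of the one-dimensional interpolant; once you see that this is the crux and that it is resolved by Hypothesis~\ref{hypo:01} together with the endpoint Lemma~\ref{lemma:aux5}, the rest of your outline goes through essentially as in the paper.
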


\begin{remark}
A similar estimate in the continuous function space norm ($\| \cdot \|_\infty$) for the spherical interpolation operator
(using Chebyshev polynomial basis based matrix-free representation) was proved in~\cite{GaMha:2006}: 
\[
 \|{\cal F}-{\cal Q}_NF\|_\infty \le C (\log N )^2 N^{-m}\|F\circ
{\bf x}^{-1}\|_{{\cal C}^m(\Sp)},
\]
where ${\cal C}^m(\Sp)$ denotes  the space of functions on $\Sp$ with 
continuous derivatives  up to order $m$,  endowed with the natural norm.
We recall that ${\cal H}^{1+\epsilon}(\Sp) \subset {\cal C}^0(\Sp)$, for any $\epsilon > 0$.

Convergence analysis of Galerkin computer models  of partial differential equations (PDEs)
are usually studied in the Hilbert space setting Sobolev (and equivalent  energy) norms and hence our new
result is widely applicable, for example, in  analyzing fully discrete Galerkin methods for  approximating PDE
(and its equivalent boundary integral equation) based models. Fully discrete Galerkin methods are obtained
by approximating Galerkin integrals (and also  integral operators) in the model by
finite sums (quadratures/cubatures). In  Section~\ref{sec:quad} we demonstrate
the applicability of the Sobolev norm estimate~\eqref{eq:main_res} for analyzing efficient interpolatory cubatures.
\end{remark}

\begin{remark}
In~\cite{DoGa:2012} a similar interpolation process was studied  with a non-uniform 
distribution of the nodes in the elevation angle, namely, that which
makes  $\cos \theta_n$ the Gauss-Lobatto points. The 
convergence in this non-uniform grid case was shown to be very similar to that stated in Theorem~\ref{theo:main} but without the penalizing 
$\log N$ term.  As demonstrated in Section~\ref{sec:num_exp}, the FFT-based approximation considered in
this article is computationally more efficient than that in~\cite{DoGa:2012}. However, the mathematical analysis for the equally spaced
grid points  case is challenging  in the Sobolev  framework as shown 
in the next two sections.

\end{remark}

\section{Functional framework and properties}\label{sec:3}

In this section we describe orthogonal decompositions of some Sobolev spaces \modificationG{\cite{DoHeSa:2009}}  that are crucial for proving Theorem 
\ref{theo:main}. To this end, we introduce some fundamental properties of various
norms that we state in this section and prove these properties in Appendix~\ref{sec:app_proof_sec_3}. 

\subsection{Spherical harmonics and Sobolev spaces on the sphere}

The Sobolev spaces on the unit sphere can be introduced in several equivalent forms. 
One can work, for instance, with an atlas of the surface,  associated local charts and partition of unity functions and defined them in terms of $H^{s}(\mathbb{R}^2)$. This  general approach is valid for any sufficiently smooth 
surface~\cite{ad:2003, McLean:2000}. We may also construct the Sobolev spaces on the sphere
as a Hilbert scale using the eigenfunctions of the Laplace-Beltrami operator, namely, the spherical harmonics. 
We follow the spectral approach~\cite{Ne:2001} for functional framework and introduce essential details that
we use throughout this article. 

Using the associated Legendre polynomial
\begin{equation}
\label{eq:Legendres}
P_n^m(x):=
\frac{(-1)^m}{2^n n!} (1-x^2)^{m/2}\frac{{\rm d}^{m+n}}{{\rm
d}x^{m+n}}(x^2-1)^n, 
\end{equation}
we define 
\begin{equation}\label{eq:qLeg}
Q_n^m(\theta):=\bigg(\frac{2n+1}2 \frac{(n-m)!}{(n+m)!}
\bigg)^{1/2}P_n^{|m|}(\cos \theta),\quad
Q_{n}^{-m}:=Q_{n}^m, \qquad 0\le m\le n,\quad n=0,1,\ldots
\end{equation}
Denoting
\begin{equation}\label{eq:em}
e_m(\phi) :=\frac{1}{\sqrt{2\pi}}\exp({\rm i}m\phi),\qquad m\in\Z, 
\end{equation}
we  introduce the spherical harmonics~\cite{atk_book,Ne:2001}, a  polynomial
of degree $n$  on the $\Sp$, as
\begin{equation}
\label{eq:defHarmonicSpherics}
Y_n^m(\theta,\phi):= (-1)^{(m+|m|)/2} Q_{n}^m( \theta) e_{m}(\phi),\qquad
m=- n ,\ldots,n,\quad n=0,1,\ldots.
\end{equation}
It is well known that $\{Y_n^m \ : \  n = 0, 1, 2, \ldots, |m| \leq n\}$ 
is an orthonormal basis of 
\[
 {\cal H}^0:=\Big\{F:\R^2\to \R\  :\  F \text{ satisfies \eqref{eq:01}},\:
\int_0^\pi\!\int_0^{2\pi} |F(\theta,\phi)|^2\,\sin\theta\,{\rm
d}\phi\,{\rm d}\theta<\infty \Big\},
\]
endowed with the natural inner product and the induced norm $ \|F\|_{{\cal H}^0}$.
That is, if we define for any $F\in {\cal H}^0$, 
\begin{equation}
 \label{eq:defFnm}
 \widehat{F}_{n,m}:=\int_0^\pi\int_0^{2\pi}F (\theta,\phi)
\overline{{ Y_{n}^m(\theta,\phi)}}\sin\theta\,{\rm d}\phi \,{\rm d}\theta
\end{equation}
then
\[
 F=\sum_{m=-\infty}^\infty \sum_{n=|m|}^\infty \widehat{F}_{n,m} Y_n^m, \quad 
 \|F\|_{{\cal H}^0}^2=\sum_{m=-\infty}^\infty \sum_{n=|m|}^\infty |F_{n,m}|^2.
\]
We recall that for any $F^{\circ}:\Sp\to\mathbb{C}$, we have denoted  $F=F^\circ\circ{\bf p}$.
We follow the standard convention to identify $[Y_{n}^m]^{\circ}$ with $Y_{n}^m$, using
the identity $[Y_{n}^m]^{\circ} = Y_{n}^m \circ{\bf p}$. 
Clearly, if ${\cal L}^2(\Sp)$ denotes the space of all square integrable functions on $\Sp$, we have 
\[
 {\cal H}^0 =\{F : \ F^\circ \in{\cal L}^2  (\Sp)\},\quad \text{with}\quad
 \|F\|_{{\cal H}^0}=\|\modification{F^\circ}\|_{{\cal L}^2(\Sp)}. 
\]
The Sobolev spaces ${\cal H}^s$ for $s\in\mathbb{R}$, and their counterparts 
${\cal H}^s(\Sp)$, can be defined proceeding
analogously. Hence, the Sobolev norm of order $s$ is given by
\[
 \|F\|^2_{{\cal H}^s}:=\sum_{m=-\infty}^\infty \sum_{n=|m|}^\infty \left(n+\tfrac12\right)^s 
|\widehat{F}_{n,m}|^2, 
\]
which is well defined for instance if  $F\in \mathbb{T}:={\rm span}\: \langle
Y_n^m: n = 0, 1, \dots, |m| \leq n \rangle$. We may also define ${\cal H}^s$  as the completion of
$\mathbb{T}$ in $ \|\:\cdot\:\|_{{\cal H}^s}$. Finally, the Sobolev space on $\Sp$ can be  defined as 
\[
{\cal H}^s(\Sp):=\{F^\circ \ : \ F\in{\cal H}^s\}.  
\]
\subsection{Sobolev-like spaces for the Fourier modes and properties}

\modification{
We will introduce now an orthogonal decomposition of the Sobolev spaces ${\cal H}^s$ which will play an essential role in the analysis of the convergence of our interpolator.  This decomposition, first introduced in \cite{DoHeSa:2009}, consists essentially in  periodic one variable functions in $\theta$ which are Fourier coefficients, in $\phi$, of functions on the sphere.}

Given $f\in L_{\rm loc}^1(\R)$ we denote
\[
 (f\otimes e_m)(\theta,\phi):=f(\theta)e_m(\phi), \qquad m\in\Z.
\]
For $s\ge 0$, we can define the spaces
\[
 W_m^s:=\{f\in L_{\rm loc}^1 (\R) :\ f\otimes e_m\in {\cal H}^s\}, 
\]
endowed with the image norm
\[
 \|f\|_{W_m^s}:=\|f\otimes e_m\|_{{\cal H}^s}.  
\]
Then
\begin{equation}
\label{eq:normWmr}  f = \sum_{n=|m|}^\infty
 \widehat{f}_m(n) Q_n^m,\qquad
 \|f\|_{W_m^s}=\bigg(\sum_{n=|m|}^\infty
(n+{\textstyle\frac12})^{2s}|\widehat{f}_m(n)|^2\bigg)^{1/2}
\end{equation}
with convergence in $W_m^s$, where
\begin{equation*}
 \widehat{f}_m(n):=\int_{0}^\pi f(\theta)Q_n^{m}(\theta)\sin\theta\,{\rm
d}\theta=\int_0^\pi \int_0^{2\pi} \big(f\otimes e_m \big)(\theta,\phi)
\overline{Y_n^m(\theta,\phi)}\, \sin\theta\, {\rm
d}\phi\,{\rm d} \theta = \widehat{(f\otimes e_m)}_{n,m}. 
\end{equation*}
Clearly, $W_m^s=W_{-m}^s$ and for  $r> s$ the injection $W_m^r\subset W_m^s$ is
compact. Moreover, using  \eqref{eq:normWmr},
\begin{equation}
 \label{eq:inequatityWms}
\|f\|_{W_m^s}\le   (|m|+{\textstyle\frac12})^{s-r} \|f\|_{W_m^r},\qquad \forall
r\ge s.
\end{equation}

We note that \eqref{eq:01} imposes  periodicity and parity conditions on the
elements of $W_m^s$, namely
\begin{equation}
\label{eq:periodicity-conditions}
f \in W_m^s  \quad \Longrightarrow\quad f(\,\cdot\,+ 2\pi)=f,\quad
f(-\,\cdot\,)=(-1)^m f. 
\end{equation}
If we define the mapping 
\[
 ({\cal F}_m F\big)(\theta) := \sum_{n=|m|}^\infty \widehat{F}_{n,m} Q_n^m
(\theta) =\int_0^{2\pi} F(\theta,\phi)e_{-m}(\phi)\,{\rm d}\phi,
\]
it is easy to prove that  ${\cal F}_m:{\cal H}^s\to W_m^s$ is just a right
inverse of $f\longmapsto f\otimes e_m$ and that 
\[
 \|{\cal F}_m F\|_{W_m^s}\le \|F\|_{{\cal H}^s}. 
\]
In particular, we have  
\begin{equation}
 \label{eq:Norm_decomposition}
 \|F\|_{{\cal H}^s}^2 = \sum_{m=-\infty}^\infty \|{\cal F}_m F\|^2_{W_m^s}. 
\end{equation}
In other words, $\{W_m^s\}_m$ gives rise to an orthogonal sum decomposition of
${\cal H}^s$ in its Fourier modes in the azimuthal angle $\phi$. 
Clearly, 
\[
 \|f \|_{W_m^0}^2= \|f \|_{L^2_{\sin}}^2:=\int_0^\pi
|f(\theta)|^2\sin\theta\,{\rm d}\theta,
\]
and therefore the space for $s=0$ is independent of $m$, provided that one
ignores how $f$ is extended outside of  $[0,\pi]$ (see 
\eqref{eq:periodicity-conditions}).
For $m=1,2$ it is possible to derive integral expressions for these norms,
as we show in the proof (in Appendix~\ref{sec:theorem3.1}) of the following technical result.  

\begin{theorem}
 \label{theo:equiv_norms}
Let  
\begin{eqnarray}
\|f\|_{{Z_0^1}}^2&:=& \frac14\int_0^{\pi
}|f(\theta)|^2\sin\theta\,{\rm
d\theta}+
 \int_0^{\pi }|f'(\theta)|^2{\sin\theta}\,{\rm d\theta},\label{eq:Y0}\\
\|f\|_{{Z_m^1}}^2&:=& m^2 \int_0^{\pi
}|f(\theta)|^2\frac{\rm
d\theta}{\sin\theta}+
 \int_0^{\pi }|f'(\theta)|^2{\sin\theta}\,{\rm d\theta},\label{eq:Ym}\\
\|f\|_{{Z_m^2}}^2&:=& m^4 \int_0^{\pi }|f(\theta)|^2\frac{\rm
d\theta}{\sin^3\theta}+
m^2\int_0^{\pi }|f'(\theta)|^2\frac{\rm
d\theta}{\sin\theta}+\int_0^{\pi }|f''(\theta)|^2{\sin\theta}\,{\rm
d\theta}.\label{eq:Zm}
\end{eqnarray}
Then, for all $m\in\Z$ with  $|m|\ge 1$,
\begin{equation}\label{eq:01:cor:equivNorms}
\|f\|_{W_0^1}=\|f\|_{Z_0^1}\le  \|f\|_{W_m^1}\le {\textstyle\frac{\sqrt{5}}2}
\|f\|_{{Z_m^1}}\le
{\textstyle\frac{\sqrt{5}}2}\|f\|_{W_m^1}. 
 \end{equation}
Moreover, for all  $m\in\Z$ with $|m|\ge 2$,
\begin{equation}\label{eq:02:cor:equivNorms}
{\textstyle\frac1{\sqrt{3}}}\|f\|_{W_m^2}\le \|f\|_{{Z_m^2}}\le  \sqrt{3}
\|f\|_{W_m^2}.
 \end{equation}
\end{theorem}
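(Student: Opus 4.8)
The plan is to reduce both families of inequalities to a single structural fact: in the variable $\theta$ the weighted $L^2$-pairings appearing in \eqref{eq:Y0}--\eqref{eq:Zm} are quadratic forms of the Legendre differential operator. Write $L^2_{\sin}$ for the space with norm $\|\cdot\|_{L^2_{\sin}}$ and set
\[
 (L_m f)(\theta):=-\frac{1}{\sin\theta}\bigl(\sin\theta\,f'(\theta)\bigr)'+\frac{m^2}{\sin^2\theta}f(\theta).
\]
Classically $L_m Q_n^m=n(n+1)Q_n^m$ and $\{Q_n^m\}_{n\ge|m|}$ is orthonormal in $L^2_{\sin}$, so on the dense subspace $\mathcal P_m:=\mathrm{span}\,\langle Q_n^m:n\ge|m|\rangle$ of $W_m^s$, using $(n+\tfrac12)^2=n(n+1)+\tfrac14$ and \eqref{eq:normWmr},
\[
 \|f\|_{W_m^1}^2=\bigl\langle (L_m+\tfrac14)f,\,f\bigr\rangle_{L^2_{\sin}},\qquad
 \|f\|_{W_m^2}^2=\bigl\|(L_m+\tfrac14)f\bigr\|_{L^2_{\sin}}^2.
\]
Every element of $\mathcal P_m$ is smooth and vanishes to order $\sin^{|m|}\theta$ at the poles $\theta\in\{0,\pi\}$, so all the integrals in \eqref{eq:Y0}--\eqref{eq:Zm} are finite on $\mathcal P_m$ and every integration by parts below has vanishing boundary contributions; since $\mathcal P_m$ is dense in $W_m^s$ and $W_m^s$ is complete, it suffices to prove the claimed inequalities for $f\in\mathcal P_m$, after which they extend to the completion.

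For $s=1$: one integration by parts gives $\langle L_m f,f\rangle_{L^2_{\sin}}=\int_0^\pi|f'|^2\sin\theta\,{\rm d}\theta+m^2\int_0^\pi|f|^2\,{\rm d}\theta/\sin\theta$, which equals $\|f\|_{Z_m^1}^2$ when $|m|\ge1$ and $\int_0^\pi|f'|^2\sin\theta\,{\rm d}\theta$ when $m=0$. Hence $\|f\|_{W_0^1}^2=\int_0^\pi|f'|^2\sin\theta\,{\rm d}\theta+\tfrac14\|f\|_{L^2_{\sin}}^2=\|f\|_{Z_0^1}^2$ (the first equality in \eqref{eq:01:cor:equivNorms}), and $\|f\|_{W_m^1}^2=\|f\|_{Z_m^1}^2+\tfrac14\|f\|_{L^2_{\sin}}^2$ for $|m|\ge1$. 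The remaining links are elementary pointwise estimates: $\|f\|_{Z_m^1}\le\|f\|_{W_m^1}$ is immediate; $\|f\|_{L^2_{\sin}}^2=\int_0^\pi|f|^2\sin\theta\,{\rm d}\theta\le\int_0^\pi|f|^2\,{\rm d}\theta/\sin\theta\le\|f\|_{Z_m^1}^2$ (since $\sin^2\theta\le1$ and $m^2\ge1$) gives $\|f\|_{W_m^1}^2\le\tfrac54\|f\|_{Z_m^1}^2$; and the same bound with $m^2\ge\tfrac14$ gives $\|f\|_{Z_0^1}\le\|f\|_{Z_m^1}\,(\le\|f\|_{W_m^1})$, closing \eqref{eq:01:cor:equivNorms}.

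For $s=2$ and $|m|\ge2$: expand $\|f\|_{W_m^2}^2=\|L_m f\|_{L^2_{\sin}}^2+\tfrac12\|f\|_{Z_m^1}^2+\tfrac1{16}\|f\|_{L^2_{\sin}}^2$ (using the $s=1$ identity for the cross term), and evaluate $\|L_m f\|_{L^2_{\sin}}^2$ by writing $L_m f=-f''-\cot\theta\,f'+\tfrac{m^2}{\sin^2\theta}f$, squaring, and integrating the two cross terms by parts twice; the boundary terms vanish, the order-$\sin^{|m|}\theta$ decay with $|m|\ge2$ being exactly what kills the surviving $\sin^{-1}\theta$- and $\sin^{-2}\theta$-singular endpoint contributions. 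One obtains
\[
 \|L_m f\|_{L^2_{\sin}}^2=\int_0^\pi|f''|^2\sin\theta\,{\rm d}\theta+(2m^2+1)\int_0^\pi\frac{|f'|^2}{\sin\theta}\,{\rm d}\theta+\int_0^\pi\bigl(m^4-2m^2(1+\cos^2\theta)\bigr)\frac{|f|^2}{\sin^3\theta}\,{\rm d}\theta.
\]
Since $m^2\le 2m^2+1\le 3m^2$, since $0\le m^4-4m^2\le m^4-2m^2(1+\cos^2\theta)\le m^4$ for $|m|\ge2$, and since $\sin^{-1}\theta\le\sin^{-3}\theta$ and $\sin\theta\le\sin^{-3}\theta$ let the lower-order terms $\tfrac12\|f\|_{Z_m^1}^2+\tfrac1{16}\|f\|_{L^2_{\sin}}^2$ be absorbed into the $\sin^{-3}\theta$-weighted term, a coefficient-by-coefficient comparison with \eqref{eq:Zm} gives the upper bound $\|f\|_{W_m^2}^2\le 3\|f\|_{Z_m^2}^2$ for all $|m|\ge2$. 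For the reverse bound one forms $3\|L_m f\|_{L^2_{\sin}}^2-\|f\|_{Z_m^2}^2=2\int_0^\pi|f''|^2\sin\theta\,{\rm d}\theta+(5m^2+3)\int_0^\pi\frac{|f'|^2}{\sin\theta}\,{\rm d}\theta+\int_0^\pi 2m^2\bigl(m^2-3-3\cos^2\theta\bigr)\frac{|f|^2}{\sin^3\theta}\,{\rm d}\theta$; for $|m|\ge3$ the last integrand is pointwise nonnegative, so $\|f\|_{Z_m^2}^2\le 3\|L_m f\|_{L^2_{\sin}}^2\le 3\|f\|_{W_m^2}^2$, which is \eqref{eq:02:cor:equivNorms} in that range.

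The only remaining case, and the main obstacle, is $|m|=2$ in the reverse bound: there $m^2-3-3\cos^2\theta=1-3\cos^2\theta$ is negative near the poles, equivalently the coefficient $m^4-2m^2(1+\cos^2\theta)=8\sin^2\theta$ degenerates so that $\|L_2 f\|_{L^2_{\sin}}^2$ controls only $\int_0^\pi|f|^2\,{\rm d}\theta/\sin\theta$ and not $\int_0^\pi|f|^2\,{\rm d}\theta/\sin^3\theta$. Here I would invoke a one-dimensional Hardy-type inequality $\int_0^\pi|f|^2\,{\rm d}\theta/\sin^3\theta\le C\int_0^\pi|f'|^2\,{\rm d}\theta/\sin\theta$ with a sufficiently small constant $C$ (the model half-line weighted Hardy inequality with these exponents has constant $1$, and the order-$\sin^2\theta$ vanishing of $f\in\mathcal P_2$ at both poles is precisely its hypothesis), which transfers the missing $\sin^{-3}\theta$-mass onto the over-supplied $\int_0^\pi|f'|^2\,{\rm d}\theta/\sin\theta$ term and recovers $\|f\|_{Z_2^2}^2\le 3\|f\|_{W_2^2}^2$. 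The remaining effort — carrying out the two integrations by parts in the displayed formula for $\|L_m f\|_{L^2_{\sin}}^2$ and checking that each boundary term vanishes — is routine but must be done carefully, and the sharp form of the Hardy estimate for $|m|=2$ is the one genuinely delicate ingredient.
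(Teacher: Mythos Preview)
Your approach is essentially the same as the paper's: both identify $\|f\|_{W_m^1}^2=\langle(L_m+\tfrac14)f,f\rangle_{L^2_{\sin}}$ and $\|f\|_{W_m^2}^2=\|(L_m+\tfrac14)f\|_{L^2_{\sin}}^2$, expand $\|L_mf\|_{L^2_{\sin}}^2$ by integration by parts into the same explicit weighted integrals, and compare coefficients term by term; both isolate $|m|=2$ as the only case where the $\sin^{-3}\theta$ coefficient degenerates and cure it with a Hardy-type estimate. The only substantive difference is in that last step: where you invoke the model half-line Hardy inequality $\int_0^\pi|f|^2\sin^{-3}\theta\,{\rm d}\theta\le C\int_0^\pi|f'|^2\sin^{-1}\theta\,{\rm d}\theta$, the paper instead proves the explicit mixed bound
\[
\int_0^\pi \frac{|f|^2}{\sin^3\theta}\,{\rm d}\theta\;\le\;3\int_0^\pi \frac{|f|^2}{\sin\theta}\,{\rm d}\theta\;+\;2\int_0^\pi \frac{|f'|^2}{\sin\theta}\,{\rm d}\theta
\]
via a direct integration-by-parts computation using the primitive $\log\tan(\theta/2)+\cos\theta/\sin^2\theta$ of $\cos^2\theta/\sin^3\theta$ together with the pointwise bound $\sin\theta\,|\log\tan(\theta/2)|\le 1$. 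Either route gives the norm equivalence; note, however, that neither the paper's explicit constants nor your half-line Hardy constant (once transferred to the $\sin$-weighted interval) quite deliver the sharp $\sqrt{3}$ for $|m|=2$---the paper's own arithmetic in that case actually yields $\|f\|_{Z_2^2}\le\sqrt{6}\,\|f\|_{W_2^2}$---so you should not promise the exact constant there without further work.
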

\begin{proof} See Appendix~\ref{sec:theorem3.1}.
\end{proof}

From this result, one can deduce that, for $m\ne 0$, $W_m^1\subset{\cal C}(\mathbb{R})$. Hence,  assume for simplicity that $f$ is a real valued function in $W_m^1=Z_m^1$, then it is easy to verify that $f^2, (f^2)' \in L^1_{\rm loc}(\mathbb{R})$. From the Sobolev embedding theorem one concludes that $f^2$, and therefore $f$, is a continuous function. It can  be seen next that necessarily, $f(0)=f(\pi)=0$,  since otherwise 
the first integral in the right hand side of \eqref{eq:Ym} could not be finite. This is no longer true for $m=0$ as it can easily seen by considering the counterexample 
$\left|\log |\sin\theta| \right|^{1/2}\in W_0^1$.

On the other hand,  from Theorem \ref{theo:equiv_norms} we obtain
\begin{equation}\label{eq:ineq:01}
 \|f\|_{W_m^1}\le \tfrac{\sqrt{5}}2 \|f\|_{W_{m+2n}^1},\qquad  \forall n \in \mathbb{N},
\end{equation}
and  
\begin{equation}\label{eq:ineq:02}
 \|f\|_{W_m^2}\le 3 \|f\|_{W_{m+2n}^2},\qquad   
   \forall n \in \mathbb{N}, \quad |m| \geq 2.
\end{equation}
We finish analyzing the regularity of
$W_m^s$ from a classical
Sobolev point of view. To this end, we introduce the $2\pi$-periodic Sobolev spaces
 \begin{equation}\label{eq:H_hash}
  H_{\#}^r:=\Big\{f\in H_{\rm loc}^r(\R)\ :  \ f=f(\cdot+2\pi)\Big\}
 \end{equation}
endowed with the norm
\begin{equation}
 \label{eq:SobolevNorm}
 \|f\|_{  H_{\#}^r}^2:=|\widehat{f}(0)|^2+\sum_{m\ne 0}| m|^{2r}
|\widehat{f}(m)|^2,\qquad
\widehat{f}(m)= \frac1{2\pi}\int_0^{2\pi} f(\theta)\exp(-{\rm i}m\theta)\,{\rm
d}\theta.
\end{equation} For $r=0$, $\|\cdot\|_{  H_{\#}^0}$
is, up to the factor $\sqrt{2\pi}$, the $L^2(0,2\pi)$ norm. For non-negative integer values of
$r$, an equivalent norm is
given by
\begin{equation}
 \label{eq:SobolevNorm:2}
\bigg[
\int_{0}^{2\pi}|f(\theta)|^2\,{\rm d}\theta+
\int_{0}^{2\pi}|f^{(r)}(\theta)|^2\,{\rm d}\theta\bigg]^{1/2}.
\end{equation}


 \begin{proposition} \label{prop:inclusion_in_Hr}
 For all $r>0$ there exists $C_r>0$ independent of $m$ and $f$ such that
 \begin{equation}
 \label{eq:01:prop:inclusion_in_Hr}
  \|f\|_{H^r_\#}\le C_r\|f\|_{W_{m}^{r+1/2}},\qquad \forall f\in
W_m^{r+1/2}.
 \end{equation}
Further,
 \begin{eqnarray}
 \label{eq:02:prop:inclusion_in_Hr}
  \|f\|_{W_m^0}&\le& \|f\|_{H^0_\#} 
\label{eq:03a:prop:inclusion_in_Hr},\quad 
 \|f\|_{W_m^1} \le 
C(1+|m|) \|f\|_{H^1_\#}
 \label{eq:03b:prop:inclusion_in_Hr},
 \end{eqnarray}
with $C$ independent of $f$ and $m$.
 \end{proposition}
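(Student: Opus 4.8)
The plan is to prove the two displayed statements by different means: the half‑derivative‑loss bound \eqref{eq:01:prop:inclusion_in_Hr} by a restriction (trace) argument on $\Sp$, and the two inclusions in \eqref{eq:02:prop:inclusion_in_Hr} by the equivalent integral norms of Theorem~\ref{theo:equiv_norms} combined with an elementary Hardy‑type inequality.

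For \eqref{eq:01:prop:inclusion_in_Hr} the key observation I would exploit is that $\sqrt{2\pi}\,f$ is exactly the restriction, to the great circle $C$ parametrized by arc length $\theta\mapsto\p(\theta,0)$ with $\theta\in[0,2\pi)$, of the spherical function associated with $f\otimes e_m\in{\cal H}^{r+1/2}={\cal H}^{r+1/2}(\Sp)$. Indeed, $(f\otimes e_m)(\theta,0)=f(\theta)e_m(0)=f(\theta)/\sqrt{2\pi}$ for $\theta\in[0,\pi]$, while for $\theta\in[\pi,2\pi]$ one has $\p(\theta,0)=\p(2\pi-\theta,\pi)$, at which point the value is $(f\otimes e_m)(2\pi-\theta,\pi)=f(2\pi-\theta)e_m(\pi)$, and the parity and periodicity of $f$ recorded in \eqref{eq:periodicity-conditions} reduce this to $f(\theta)/\sqrt{2\pi}$ once more; this consistency check is routine. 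Since $C$ is a smooth closed geodesic on the smooth manifold $\Sp$, the classical trace theorem for restriction to a codimension‑one submanifold (see, e.g., \cite{McLean:2000, Ne:2001}) furnishes a bounded linear map ${\cal H}^{r+1/2}(\Sp)\to H^r(C)\cong H^r_\#$ for every $r>0$. Applying it to $f\otimes e_m$ and recalling $\|f\otimes e_m\|_{{\cal H}^{r+1/2}}=\|f\|_{W_m^{r+1/2}}$ gives \eqref{eq:01:prop:inclusion_in_Hr}; and because the norm of this trace map depends only on $r$ and on the fixed geometry of $(\Sp,C)$, never on which function is being restricted, the resulting constant $C_r$ is automatically independent of $m$.

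For \eqref{eq:02:prop:inclusion_in_Hr} I would argue directly in one variable. The bound $\|f\|_{W_m^0}\le\|f\|_{H^0_\#}$ is immediate: by \eqref{eq:periodicity-conditions} the function $|f|^2$ is even and $2\pi$‑periodic, so $\|f\|_{W_m^0}^2=\int_0^\pi|f|^2\sin\theta\,{\rm d}\theta\le\int_0^\pi|f|^2\,{\rm d}\theta=\tfrac12\int_0^{2\pi}|f|^2\,{\rm d}\theta$, which is a constant multiple of $\|f\|_{H^0_\#}^2$. For $\|f\|_{W_m^1}\le C(1+|m|)\|f\|_{H^1_\#}$ I would invoke Theorem~\ref{theo:equiv_norms}: when $m=0$, $\|f\|_{W_0^1}=\|f\|_{Z_0^1}$, and bounding $\sin\theta\le1$ in \eqref{eq:Y0} together with the equivalent norm \eqref{eq:SobolevNorm:2} yields $\|f\|_{Z_0^1}^2\le\tfrac14\int_0^\pi|f|^2+\int_0^\pi|f'|^2\le C\|f\|_{H^1_\#}^2$; when $|m|\ge1$, \eqref{eq:01:cor:equivNorms} gives $\|f\|_{W_m^1}\le\tfrac{\sqrt5}2\|f\|_{Z_m^1}$, and since $f\in W_m^1$ forces $f(0)=f(\pi)=0$, the Hardy‑type estimate $\int_0^\pi|f|^2/\sin\theta\,{\rm d}\theta\le C\int_0^\pi|f'|^2\,{\rm d}\theta$ — which follows from $\sin\theta\ge\tfrac2\pi\,{\rm dist}(\theta,\{0,\pi\})$ on $[0,\pi]$ and from $|f(\theta)|^2\le\theta\int_0^\theta|f'|^2$ near $\theta=0$ (symmetrically near $\theta=\pi$) — combined with $\int_0^\pi|f'|^2\sin\theta\le\int_0^\pi|f'|^2$ turns \eqref{eq:Ym} into $\|f\|_{Z_m^1}^2\le C(1+m^2)\|f\|_{H^1_\#}^2$, and taking square roots completes the proof.

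I expect the only genuinely substantive point to be the uniformity in $m$ of the constant in \eqref{eq:01:prop:inclusion_in_Hr}; the trace‑theorem route settles it for free, because the trace operator is blind to the azimuthal index $m$. If one instead wants a self‑contained argument in the spirit of \cite{DoHeSa:2009}, one must expand $f=\sum_{n\ge|m|}\widehat f_m(n)\,Q_n^m$, use that each $Q_n^m$ is a trigonometric polynomial of degree $n$ in $\theta$, and control the resulting $H^r_\#$ norm — both the diagonal terms $\|Q_n^m\|_{H^r_\#}$ and the off‑diagonal ones — uniformly in $m$; that estimate is the delicate part and is where the bulk of the work would lie. The two inclusions in \eqref{eq:02:prop:inclusion_in_Hr}, by contrast, are entirely routine once Theorem~\ref{theo:equiv_norms} is in hand.
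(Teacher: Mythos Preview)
Your proposal is correct and follows essentially the same approach as the paper: the trace/restriction argument onto the great circle $\theta\mapsto(\sin\theta,0,\cos\theta)$ for \eqref{eq:01:prop:inclusion_in_Hr}, and the equivalent $Z_m^1$ norms of Theorem~\ref{theo:equiv_norms} together with the Hardy-type bound $\int_0^\pi|f|^2/\sin\theta\,{\rm d}\theta\le C\int_0^\pi|f'|^2\,{\rm d}\theta$ (from $f(0)=f(\pi)=0$) for \eqref{eq:02:prop:inclusion_in_Hr}. Your remark that the uniformity in $m$ of $C_r$ comes for free from the trace operator is exactly the paper's point as well.
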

 \begin{proof} See Appendix~\ref{sec:proposition3.2}.
 \end{proof}

%

\section{Error estimates  for spherical interpolatory approximations}\label{sec:err_est}
In this section we prove Theorem~\ref{theo:main} after deriving several associated one dimensional
interpolant properties. 
\subsection{Fourier analysis}
We consider the following  even and odd one dimensional interpolation problem, 
for the  data $f(\theta_j),~ j =0, \ldots N$:

\begin{eqnarray*}
\text{Find}~ \mathrm{q}_N^{\rm e} f \in \mathbb{D}_N^{\rm e}  f,\quad
\text{such that} \quad
 \mathrm{q}_N^{\rm e} f(\theta_j)&=&f(\theta_j),\quad j=0,\ldots,N, \\ \\
  \text{Find}~ \mathrm{q}_N^{\rm o} f \in \mathbb{D}_{N-2}^{\rm o} ,\quad
\text{such that} \quad
 \mathrm{q}_N^{\rm o} f(\theta_j)&=&f(\theta_j),\quad j=1,\ldots,N-1.
 \end{eqnarray*}
 For notational convenience, we introduce
\[
\mathrm{q}_N^{m}:=\begin{cases} 
 \mathrm{q}_N^{\rm e}, &\text{if $m$ is even},\\
 \mathrm{q}_N^{\rm o},  &\text{if $m$ is odd}.
 \end{cases}
\]
Then, as proved in~\cite[Section~4.1, Lemma 3]{DoGa:2012}, 
we have the following Fourier expression  connecting the matrix-free interpolant on
the sphere, defined in \eqref{eq:interpolant}, and the even and odd interpolants,  as a result of 
an aliasing process in $\phi$:  for all $F\in{\cal H}^r$ with $r>1$
\begin{equation}\label{eq:interp_new_rep}
\big({\cal Q}_N F\big)(\theta,\phi) =\sum_{-N+1\le  m\le
N} \big({\rm q}_N^{m} \rho_N^m
F\big)(\theta)e_{m}(\phi),\quad \text{with } \rho_N^m F:=\sum_{\ell=-\infty}^\infty   {\cal
F}_{m+2\ell
N}F. 
\end{equation} 
Then,  error in the spherical interpolatory approximation can be estimated 
as
\begin{equation}\label{eq:01:proof:MainTheo}
\begin{aligned}
 \|{\cal Q}_NF-F\|_{{\cal H}^s}^2&= \sum_{-N+1\le m\le N} 
 \|{\rm q}_N^{m} \rho_N^mF-{\cal F}_mF\|^2_{W_m^s}+\bigg[\sum_{m\ge  N+1}+\sum_{m\le -N} \bigg]
 \|{\cal F}_mF\|^2_{W_m^s}\\
 &\le  \sum_{-N+1\le m\le N} 
 \|{\rm q}_N^{m} {\cal F}_mF-{\cal F}_mF\|^2_{W_m^s}+\sum_{-N+1\le m\le N} 
 \|{\rm q}_N^{m} (\rho_N^mF-{\cal F}_mF)\|^2_{W_m^s} \\
 &  \qquad +\bigg[\sum_{m\ge  N+1}+\sum_{m\le -N} \bigg]
 \|{\cal F}_mF\|^2_{W_m^s} 
\end{aligned}
\end{equation}
Thus, for proving Theorem \ref{theo:main}, we have to bound three terms which depend
on the approximation properties of ${\rm q}_N^m$, the stability of this interpolant and the error introduced by ignoring the tail of the Fourier series (in $\theta$). The two first properties concerning for the  one-dimensional interpolant ${\rm q}_N^m$ will be explored in the next subsection.

\subsection{Error estimates for one dimensional interpolants}

Let ${\rm I}_N$ be the trigonometric interpolant for $2\pi$-periodic functions
defined by
\begin{equation}
 {\rm I}_N f \in {\rm span}\langle e_m\ :\ -N<m\le N\rangle,\quad
\text{such that }\quad {\rm I}_N g(\theta_j)=g(\theta_j),\quad j=-N+1,\ldots, N.
\label{eq:interTrig}
\end{equation}
Then, if we denote $g_-=g(-\:\cdot\:)$, it is easy to show that the average function
\[
 {\rm q}_Ng:=
{\textstyle\frac{1}{2}}\big[{\rm I}_N g +({\rm I}_N g_-)_-\big]\in {\rm
span}\langle e_m\ :\ -N\le m\le N\rangle \
\]
solves also \eqref{eq:interTrig} and preserves the parity of the integrand,
i.e., if $g$ is even/odd then so is ${\rm q}_Ng$. Further,
\[
  {\rm q}_Ng\in\mathbb{D}_N^{\rm e}\oplus\mathbb{D}_{N-2}^{\rm o}. 
\]
It is now straightforward to check that for $f_{\rm e}$ and $ f_{\rm o}$ $2\pi$-periodic 
even and  odd respectively
functions, we have
\[
 {\rm q}_N^{\rm e} f_{\rm e}= {\rm q}_N f_{\rm e},\quad 
 {\rm q}_N^{\rm o} f_{\rm o}= {\rm q}_N f_{\rm o}.
\]
These
relations and the well known Sobolev convergence estimates for $I_N$ (cf. \cite[Ch. 8]{SaVa:2002}), 
yield
\begin{equation}
\label{eq:qnmHs}
 \|\mathrm{q}_{N}^m f-f\|_{H^s_\#}\le C N^{s-t}\|f\|_{H^t_\#}, \qquad
 \text{for any $f\in H^t_{\#}$}\cap W_m^1, \qquad 0\le s\le t, \quad t>1/2.
\end{equation}

In the above inequality and in the reminder of this section, it is convenient to use $C$ to represent a generic positive constant that is independent of the truncation parameter $N$. 

In this subsection we will derive convergence estimates for $\mathrm{q}_{N}^m$ very similar to 
\eqref{eq:qnmHs} but with the norms $\|\cdot\|_{W_m^s}$ instead. We prove  such results for
the interpolant $\mathrm{q}_{N}^m$ in Theorem~\ref{theo:qNmf_1}, after developing ten 
auxiliary results in this subsection. To this end, we first start with inverse estimate:

\modification{
\begin{lemma}\label{lemma:inverseIneq} 
For any $s \geq 0$, there exists $C>0$ such that, for any
 $r_N\in\mathbb{D}_N^{\rm e}$, $s_N\in \mathbb{D}_{N
-2}^{\rm o}$,  the following estimate holds
\begin{equation}\label{eq:inv_ineq}
\|r_N\|_{W_0^s}\le C  N^{s}\|r_N\|_{W_0^0},\quad 
\|s_N\|_{W_1^s}\le C  N^{s}\|s_N\|_{W_1^0}.
\end{equation}
\end{lemma}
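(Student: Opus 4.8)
The plan is to reduce both inequalities to the Hilbert–scale definition \eqref{eq:normWmr} of the norms $\|\cdot\|_{W_m^s}$ by first identifying the one–dimensional polynomial spaces $\mathbb{D}_N^{\rm e}$ and $\mathbb{D}_{N-2}^{\rm o}$ with finite spans of associated Legendre functions. From \eqref{eq:Legendres}--\eqref{eq:qLeg}, $Q_n^0(\theta)$ is a polynomial of exact degree $n$ in $\cos\theta$, while $Q_n^1(\theta)=\sin\theta\,\pi_{n-1}(\cos\theta)$ with $\pi_{n-1}$ of exact degree $n-1$. A dimension count --- the spans on the right sitting inside the polynomials of degree $\le N$ in $\cos\theta$ (dimension $N+1$), respectively inside $\sin\theta$ times the polynomials of degree $\le N-2$ in $\cos\theta$ (dimension $N-1$) --- then gives
\[
\mathbb{D}_N^{\rm e}={\rm span}\,\langle Q_n^0:0\le n\le N\rangle,\qquad
\mathbb{D}_{N-2}^{\rm o}={\rm span}\,\langle Q_n^1:1\le n\le N-1\rangle .
\]

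Granting this, I would expand $r_N=\sum_{n=0}^{N}\widehat{(r_N)}_0(n)\,Q_n^0$ and apply \eqref{eq:normWmr} directly, using the crude bound $(n+\tfrac12)^{2s}\le(N+\tfrac12)^{2s}$ valid for $0\le n\le N$:
\[
\|r_N\|_{W_0^s}^2=\sum_{n=0}^{N}\big(n+\tfrac12\big)^{2s}|\widehat{(r_N)}_0(n)|^2
\le\big(N+\tfrac12\big)^{2s}\sum_{n=0}^{N}|\widehat{(r_N)}_0(n)|^2=\big(N+\tfrac12\big)^{2s}\|r_N\|_{W_0^0}^2 .
\]
The estimate for $s_N=\sum_{n=1}^{N-1}\widehat{(s_N)}_1(n)\,Q_n^1$ is identical, now with $(n+\tfrac12)\le(N-\tfrac12)$ for $1\le n\le N-1$, giving $\|s_N\|_{W_1^s}\le(N-\tfrac12)^s\|s_N\|_{W_1^0}$. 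Since $N\ge2$ one has $(N+\tfrac12)^{s}\le(\tfrac54)^{s}N^{s}$, so \eqref{eq:inv_ineq} follows with a constant $C=(5/4)^{s}$ depending only on $s$ and not on $N$.

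There is no genuinely hard step here: the only point that needs care is establishing the two span identities above, which rely on the explicit polynomial structure of $P_n^{|m|}$ in \eqref{eq:Legendres} (equivalently, on the standard fact that $\mathbb{D}_N^{\rm e}$ and $\mathbb{D}_{N-2}^{\rm o}$ are exactly the order-$0$ and order-$\pm1$ spherical-harmonic Fourier modes, in $\phi$, of polynomials of degree $\le N$ on $\Sp$). Once these are available the estimates are immediate from \eqref{eq:normWmr}. I note in passing that the same argument yields the companion inverse estimate $\|t_N\|_{W_2^s}\le C N^s\|t_N\|_{W_2^0}$ for $t_N\in\{\sin^2\theta\,q_{N-2}:q_{N-2}\in\mathbb{D}_{N-2}^{\rm e}\}={\rm span}\,\langle Q_n^2:2\le n\le N\rangle$, should such a bound be needed elsewhere in the error analysis.
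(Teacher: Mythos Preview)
Your proposal is correct and follows essentially the same approach as the paper's cited reference \cite[Lemma~6]{DoGa:2012}: identify $\mathbb{D}_N^{\rm e}={\rm span}\langle Q_n^0:0\le n\le N\rangle$ and $\mathbb{D}_{N-2}^{\rm o}={\rm span}\langle Q_n^1:1\le n\le N-1\rangle$ via the polynomial structure of the associated Legendre functions, then read the inverse inequality directly off the spectral norm~\eqref{eq:normWmr}. The paper itself gives no detailed argument here, simply noting that the $s=1$ case is in the reference and that general $s\ge0$ is similar; your write-up makes this explicit and is a faithful reconstruction.
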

\begin{proof} The above inverse inequality for the case $s=1$  was established in
\cite[Lemma 6]{DoGa:2012} and the proof is similar for $s \geq 0$.
\end{proof}
}

\begin{proposition}\label{lemma:01}
For all  $m\in\mathbb{Z}$ and $N\ge 2$, there exist projections ${\rm p}_{N}^m$ on 
$\mathbb{D}_N^{\rm e}$ for even $m$ and on $\mathbb{D}_{N-2}^{\rm o}$ for odd $m$ which satisfy the following convergence estimate
\begin{equation}\label{eq:01:lemma:01}
 \|{\rm p}_N^m f-f\|_{W_m^s}\le C_{s,t} N^{s-t}\|f\|_{W_m^t},
\end{equation}
where $0\le s\le t$ with $t>1$ and $C_{s,t}$ independent of $f$. Moreover, 
\[
{\rm p}_N^m f(0)=f(0),\quad \text{and}\quad   {\rm p}_N^m f(\pi)=f(\pi).
\]
\end{proposition}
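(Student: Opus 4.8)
The plan is to take ${\rm p}_N^m$ to be a truncated Fourier--Legendre partial sum, corrected --- only in the exceptional mode $m=0$ --- by a low-degree bump that restores the two endpoint values. Concretely, for $f=\sum_{n\ge|m|}\widehat f_m(n)Q_n^m\in W_m^t$ I would set
\[
 \Pi_N^m f:=\sum_{|m|\le n\le\nu_N^m}\widehat f_m(n)Q_n^m,\qquad \nu_N^m:=N\ \text{if $m$ is even},\qquad \nu_N^m:=N-1\ \text{if $m$ is odd}.
\]
Writing $P_n^{|m|}(\cos\theta)=(1-\cos^2\theta)^{|m|/2}$ times a polynomial of degree $n-|m|$ in $\cos\theta$, one sees that for even $m$ the prefactor is a polynomial of degree $|m|$ in $\cos\theta$, so $Q_n^m\in\mathbb{D}_n^{\rm e}$, while for odd $m$ the prefactor equals $\sin\theta$ times a polynomial of degree $|m|-1$ in $\cos\theta$, so $Q_n^m\in\mathbb{D}_{n-1}^{\rm o}$; hence $\Pi_N^m f\in\mathbb{D}_N^{\rm e}$ for even $m$ and $\Pi_N^m f\in\mathbb{D}_{N-2}^{\rm o}$ for odd $m$. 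By \eqref{eq:normWmr} the family $\{(n+\tfrac12)^{-s}Q_n^m\}_{n\ge|m|}$ is orthonormal in $W_m^s$, so $\Pi_N^m$ is the $W_m^s$-orthogonal projection onto its span, simultaneously for every $s\ge0$. I would then put ${\rm p}_N^m:=\Pi_N^m$ when $m\ne0$ and ${\rm p}_N^0 f:=\Pi_N^0 f+r_N$, with $r_N\in\mathbb{D}_N^{\rm e}$ the corrector constructed below.

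For $m\ne0$ the estimate is then immediate: orthogonality gives $\|{\rm p}_N^m f-f\|_{W_m^s}^2=\sum_{n>\nu_N^m}(n+\tfrac12)^{2s}|\widehat f_m(n)|^2\le(\nu_N^m+\tfrac12)^{2(s-t)}\|f\|_{W_m^t}^2$ for $0\le s\le t$, and if $|m|>\nu_N^m$ the sum is empty and the required bound is just \eqref{eq:inequatityWms}. For the endpoints, $t>1$ gives $f\in W_m^t\subset W_m^1\subset{\cal C}(\mathbb{R})$ with $f(0)=f(\pi)=0$ (the discussion following Theorem~\ref{theo:equiv_norms}), while $Q_n^m(0)=Q_n^m(\pi)=0$ for $m\ne0$ because $(1-x^2)^{|m|/2}$ vanishes at $x=\pm1$; hence ${\rm p}_N^m f(0)=f(0)$ and ${\rm p}_N^m f(\pi)=f(\pi)$.

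The real work is the mode $m=0$. Here $t>1$ yields $W_0^t\subset H^{t-1/2}_\#\subset{\cal C}(\mathbb{R})$ (Proposition~\ref{prop:inclusion_in_Hr} and Sobolev embedding), so $\Pi_N^0 f\to f$ uniformly and $a:=(f-\Pi_N^0 f)(0)=\sum_{n>N}\widehat f_0(n)Q_n^0(0)$, $b:=(f-\Pi_N^0 f)(\pi)=\sum_{n>N}\widehat f_0(n)Q_n^0(\pi)$, with $|Q_n^0(0)|=|Q_n^0(\pi)|=(n+\tfrac12)^{1/2}$. Cauchy--Schwarz together with $\sum_{n>N}(n+\tfrac12)^{1-2t}\le(2t-2)^{-1}(N+\tfrac12)^{2-2t}$ (finite precisely because $t>1$) then gives $|a|+|b|\le C_t\,N^{1-t}\|f\|_{W_0^t}$. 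For the corrector I would use the Fejér kernel $\mathcal F_N(\theta)=(N+1)^{-1}\bigl(\sin\tfrac{(N+1)\theta}{2}/\sin\tfrac\theta2\bigr)^2$ --- an even cosine polynomial of degree $N$, with $0\le\mathcal F_N\le\mathcal F_N(0)=N+1$ and $\mathcal F_N(\theta)\le\pi^2(N+1)^{-1}\theta^{-2}$ on $(0,\pi]$ --- and set $\psi_N:=(N+1)^{-1}\mathcal F_N\in\mathbb{D}_N^{\rm e}$, so that $\psi_N(0)=1$ and, splitting $\int_0^\pi|\psi_N|^2\sin\theta\,{\rm d}\theta$ at $\theta=1/N$ (where $\sin\theta\le\theta$ and $\psi_N\le1$, resp. $\psi_N^2\le\pi^4(N+1)^{-4}\theta^{-4}$), $\|\psi_N\|_{W_0^0}\le C\,N^{-1}$. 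With $\psi_N^*:=\psi_N(\pi-\cdot)\in\mathbb{D}_N^{\rm e}$ and $|\psi_N(\pi)|\le(N+1)^{-2}<\tfrac12$, the symmetric $2\times2$ matrix with $1$ on the diagonal and $\psi_N(\pi)$ off it is invertible with the norm of its inverse at most $2$, so there are $c_1,c_2$ with $|c_1|+|c_2|\le2(|a|+|b|)$ for which $r_N:=c_1\psi_N+c_2\psi_N^*\in\mathbb{D}_N^{\rm e}$ satisfies $r_N(0)=a$, $r_N(\pi)=b$. Thus ${\rm p}_N^0 f(0)=f(0)$, ${\rm p}_N^0 f(\pi)=f(\pi)$, and ${\rm p}_N^0$ is linear and is the identity on $\mathbb{D}_N^{\rm e}={\rm span}\langle Q_n^0:0\le n\le N\rangle$ (there $\Pi_N^0$ already acts as the identity and $a=b=0$), hence a projection.

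Finally, combining the triangle inequality, $\|\Pi_N^0 f-f\|_{W_0^s}\le(N+\tfrac12)^{s-t}\|f\|_{W_0^t}$, the inverse inequality of Lemma~\ref{lemma:inverseIneq} applied to $r_N\in\mathbb{D}_N^{\rm e}$, and $\|r_N\|_{W_0^0}\le C\,N^{-1}(|a|+|b|)\le C_t\,N^{-t}\|f\|_{W_0^t}$, I get
\[
 \|{\rm p}_N^0 f-f\|_{W_0^s}\le(N+\tfrac12)^{s-t}\|f\|_{W_0^t}+C\,N^{s}\|r_N\|_{W_0^0}\le C_{s,t}\,N^{s-t}\|f\|_{W_0^t},
\]
which is \eqref{eq:01:lemma:01} in the remaining case. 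The main obstacle is exactly this last point --- showing the corrector does not spoil the rate --- and it rests on the two sharp facts $|a|+|b|\lesssim N^{1-t}\|f\|_{W_0^t}$, which genuinely needs $t>1$ and uses that $Q_n^0(0)$ grows only like $n^{1/2}$ rather than $n$, and $\|\psi_N\|_{W_0^0}\lesssim N^{-1}$, for which the endpoint concentration of the Fejér kernel is precisely what is required (a cruder bump such as $((1+\cos\theta)/2)^N$ would give only $N^{-1/2}$ and lose a full power $N^{1/2}$).
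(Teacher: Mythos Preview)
Your proof is correct. For $m\ne 0$ you proceed exactly as the paper does, via the truncated Legendre--Fourier partial sum (the paper truncates at $N-1$ uniformly rather than your $\nu_N^m$, a harmless variant). The genuine difference is at $m=0$. There the paper takes ${\rm p}_N^0$ to be the Gauss--Lobatto interpolant in $\cos\theta$ and simply imports the $W_0^0$ estimate $\|{\rm p}_N^0 f-f\|_{W_0^0}\le C_t N^{-t}\|f\|_{W_0^t}$ from \cite[Appendix~A, Proposition~6]{DoGa:2012}, after which the bootstrap to $W_0^s$ goes through the same inverse inequality (Lemma~\ref{lemma:inverseIneq}) you invoke. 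Your route instead keeps the orthogonal partial sum $\Pi_N^0$ and appends a Fej\'er-kernel corrector $r_N$ restoring the two endpoint values. The two sharp facts you isolate---$|a|+|b|\lesssim N^{1-t}\|f\|_{W_0^t}$ (which uses $|Q_n^0(0)|=(n+\tfrac12)^{1/2}$ and is exactly where $t>1$ enters) and $\|\psi_N\|_{W_0^0}\lesssim N^{-1}$ (for which one should use $\sin\theta\le\theta$ also on $[1/N,\pi]$, giving $\int_{1/N}^\pi\psi_N^2\sin\theta\,{\rm d}\theta\lesssim N^{-4}\int_{1/N}^\pi\theta^{-3}\,{\rm d}\theta\lesssim N^{-2}$)---combine to give $\|r_N\|_{W_0^0}\lesssim N^{-t}\|f\|_{W_0^t}$, and the rest follows as you wrote.

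What each approach buys: your construction is fully self-contained---it does not lean on the Gauss--Lobatto convergence analysis from the companion paper---and the corrector is explicit and elementary. The paper's choice, by contrast, reuses a result already established elsewhere, keeping the argument here to a couple of lines at the cost of an external dependency.
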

\begin{proof}
For $m\ne 0$ we can choose ${\rm p}_N^m $ to be the truncated partial sum
\[
{\rm T}_N^m f:=\sum_{0\le n\le N-1} \widehat{f}_m(n)Q_n^m\in{\rm
span}\langle Q_n^m \ :\ n\le N-1 \rangle
\subset\left\{\begin{array}{ll}
               \mathbb{D}_{N-1}^{\rm e},\quad&\text{if $m$ is even},\\
               \mathbb{D}_{N-2}^{\rm o},\quad&\text{if $m$ is odd},\\
              \end{array}
\right.
\]
with the choice of  $Q_n^m=0$ for $n<|m|$ in~\eqref{eq:normWmr} so that the sum above is
void for $n<|m|$.  The definition of
the norms of $W_m^s$ implies
\begin{equation}\label{eq:trun_est}
 \|{\rm T}_N^m f-f\|_{W_m^s}^2=\sum_{n\geq N} \left|n+\tfrac12\right|^{2s}
|\widehat{f}_m(n)|^2\le  \left(N+\tfrac12\right)^{2s-2t}\|f\|^2_{W_m^t}.
\end{equation}
We observe that \eqref{eq:01:lemma:01} holds actually for any $t\ge s$ and 
also that for $m\ne 0$, ${\rm p}_N^m f(0)={\rm
p}_N^m f(\pi)=0$ and $f\in W_m^1$ vanishes at $\{0,\pi\}$.

For $m=0$, we cannot ensure that
${\rm
T}_N^0 f(\{0,\pi\})=f(\{0,\pi\})$ which prompts us to consider a different projection.  In this case,
we choose ${\rm p}_N^0$ to be the interpolant
\[
 {\rm p}_N^0  \in \mathbb{D}_N^{\rm e}, \quad \text{such that} \quad 
\modification{p_N^0(\eta_j)}=f(\eta_j),\quad j=0,\ldots,N
\]
where $\{\cos\eta_j\}_j$ are the Gauss-Lobatto quadrature points,  which includes the 
endpoints, that is, $\eta_0=0$, and $\eta_N=\pi$. In
\cite[Appendix A, Proposition 6]{DoGa:2012} we proved that  for all $t>1$ there exists 
$C_t>0$ such that 
\begin{equation}\label{eq:pn0}
 \|{\rm p}_N^0 f-f\|_{W_0^0}\le C_t N^{-t}\|f\|_{W_0^t},\quad \forall f\in
W_0^t. 
\end{equation}
Using \eqref{eq:inv_ineq} and \eqref{eq:trun_est}, we first obtain
\begin{eqnarray*}
\|{\rm p}_N^0 f-f\|_{W_0^s}&\le& \|{\rm p}_N^0(f-{\rm T}_N^0
f)\|_{W_0^s}+\|{\rm T}_N^0f-  f\|_{W_0^s}\\
&\le& C N^{s}\|{\rm p}_N^0(f-{\rm
T}_N^0 f)\|_{W_0^0} + \left(N+\tfrac12\right)^{s-t}\|f\|_{W_0^t}\\
&\le& C \modification{N^{s}}\big[\|{\rm p}_N^0 f-f\|_{W_0^0}+\|{\rm
T}_N^0 f -f\|_{W_0^0}\big] + \left(N+\tfrac12\right)^{s-t}\|f\|_{W_0^t}\\
&\le& C \left[\left(N+\tfrac12\right)^{s}\|{\rm p}_N^0 f-f\|_{W_0^0}+2
\left(N+\tfrac12\right)^{s-t}\|f\|_{W_0^t}\right]
\end{eqnarray*}
and hence the desired result~\eqref{eq:01:lemma:01} follows by applying~\eqref{eq:pn0}
\end{proof}

 \begin{lemma}[{\cite[Proposition 4]{DoGa:2012}}]\label{lemma:ineqY1} 
 For $f \in Z_0^1$, there exists $C>0$  such that 
   \begin{eqnarray*}
    \|f\|_{H^0_\#}\le C\big[ 
    \|f\|_{L^2_{\sin}}+ \|f\|_{L^2_{\sin}}^{1/2} \|f'\|_{L^2_{\sin}}^{1/2} \big]
     \end{eqnarray*}
   \end{lemma}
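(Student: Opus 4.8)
The plan is to reduce the left-hand side to an unweighted $L^2$-norm on $[0,\pi]$ and then to estimate that norm by integrating by parts against a primitive of the constant function $1$ that vanishes at the two poles. First, as noted after~\eqref{eq:SobolevNorm}, $\|f\|_{H^0_\#}=(2\pi)^{-1/2}\|f\|_{L^2(0,2\pi)}$; since $f\in Z_0^1=W_0^1$ is $2\pi$-periodic and even (the case $m=0$ of~\eqref{eq:periodicity-conditions}), one has $\int_0^{2\pi}|f|^2=2\int_0^\pi|f|^2$, hence $\|f\|_{H^0_\#}^2=\tfrac1\pi\int_0^\pi|f(\theta)|^2\,{\rm d}\theta$. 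So it suffices to bound $\int_0^\pi|f|^2\,{\rm d}\theta$ by $C\big[\|f\|_{L^2_{\sin}}^2+\|f\|_{L^2_{\sin}}\|f'\|_{L^2_{\sin}}\big]$. Set $w(\theta):=\theta$ on $[0,\pi/2]$ and $w(\theta):=\theta-\pi$ on $[\pi/2,\pi]$, so that $w'\equiv1$, $w(0)=w(\pi)=0$ and $|w(\theta)|=\min\{\theta,\pi-\theta\}\le\tfrac{\pi}{2}\sin\theta$ on $[0,\pi]$ (concavity of $\sin$). Splitting $\int_0^\pi$ at $\pi/2$ and integrating by parts on each half (the contributions at $0$ and $\pi$ vanish, see below; those at $\pi/2$ add up) yields the identity
\[
 \int_0^\pi|f(\theta)|^2\,{\rm d}\theta=\pi\,\big|f(\tfrac{\pi}{2})\big|^2-2\,{\rm Re}\int_0^\pi w(\theta)\,f'(\theta)\,\overline{f(\theta)}\,{\rm d}\theta .
\]

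Two terms remain. The integral is controlled immediately: $|w|\le\tfrac{\pi}{2}\sin\theta$ together with the Cauchy--Schwarz inequality for the measure $\sin\theta\,{\rm d}\theta$ give $2\big|\int_0^\pi w f'\bar f\big|\le\pi\int_0^\pi\sin\theta\,|f'||f|\,{\rm d}\theta\le\pi\|f'\|_{L^2_{\sin}}\|f\|_{L^2_{\sin}}$, which is of the desired form. The delicate point---and the one I expect to be the main obstacle---is to bound the pointwise value $|f(\pi/2)|^2$ by a constant times $\|f\|_{L^2_{\sin}}^2+\|f\|_{L^2_{\sin}}\|f'\|_{L^2_{\sin}}$ \emph{without} producing a stray $\|f'\|_{L^2_{\sin}}^2$ (which a crude use of the fundamental theorem of calculus would create and which cannot be absorbed on the right). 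I would proceed as follows: for each $\eta\in[\pi/4,\pi/2]$, $|f(\pi/2)|^2=|f(\eta)|^2+2\,{\rm Re}\int_\eta^{\pi/2}\overline{f}\,f'$; since $\sin\theta\ge1/\sqrt{2}$ on $[\pi/4,\pi/2]$, Cauchy--Schwarz for $\sin\theta\,{\rm d}\theta$ on that interval bounds the last integral by $\sqrt{2}\,\|f\|_{L^2_{\sin}}\|f'\|_{L^2_{\sin}}$; averaging the resulting inequality over $\eta\in[\pi/4,\pi/2]$ replaces $|f(\eta)|^2$ by $\tfrac{4}{\pi}\int_{\pi/4}^{\pi/2}|f|^2\,{\rm d}\eta\le\tfrac{4\sqrt{2}}{\pi}\|f\|_{L^2_{\sin}}^2$. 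Substituting both bounds into the displayed identity gives $\int_0^\pi|f|^2\le C\big[\|f\|_{L^2_{\sin}}^2+\|f\|_{L^2_{\sin}}\|f'\|_{L^2_{\sin}}\big]$, and the stated estimate follows on taking square roots and using $\sqrt{a+b}\le\sqrt a+\sqrt b$.

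It remains to justify the vanishing of the boundary contributions, i.e.\ $\lim_{\theta\to0^+}\theta|f(\theta)|^2=\lim_{\theta\to\pi^-}(\pi-\theta)|f(\theta)|^2=0$; this is where $f\in Z_0^1$ (rather than merely $\|f\|_{L^2_{\sin}}<\infty$) is genuinely used. Since $f'\in L^2_{\sin}(0,\pi)$, the function $f$ is locally absolutely continuous on $(0,\pi)$, and for $0<a\le\pi/4$, writing $f(a)-f(\pi/2)=\int_a^{\pi/2}f'$ and applying Cauchy--Schwarz with $\int_a^{\pi/2}{\rm d}s/\sin s\le C(1+\log(1/a))$ (because $\sin s\ge\tfrac{2}{\pi}s$ on $(0,\pi/2]$) gives $|f(a)|^2\le C\big(|f(\pi/2)|^2+\log(1/a)\,\|f'\|_{L^2_{\sin}}^2\big)$, whence $a|f(a)|^2\to0$; the endpoint $\pi$ is identical. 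This same estimate, via the domination $|w\,f'\overline f|\le\tfrac{\pi}{2}\sin\theta\,|f'||f|\in L^1(0,\pi)$, also makes rigorous the limiting argument behind the integration by parts. Everything else in the argument is elementary weighted Cauchy--Schwarz.
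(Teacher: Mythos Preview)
Your argument is correct. Both your proof and the one in \cite{DoGa:2012} rest on the same device---integration by parts against a primitive of $1$ that vanishes at the poles, combined with the comparison $\min\{\theta,\pi-\theta\}\le\tfrac{\pi}{2}\sin\theta$---but they differ in how the interior boundary contribution is suppressed. The original proof introduces a piecewise-linear partition of unity $\{\varphi_1,\varphi_2,\varphi_3\}$ localized near $0$, the middle, and $\pi$: on the middle piece $\sin\theta$ is bounded below and one estimates directly; on the end pieces one integrates $f^2\varphi_j$ by parts against $\theta$ (resp.\ $\theta-\pi$), and the interior boundary terms vanish because the cut-offs do, at the cost of an extra term $\int f^2\varphi_j'\,\theta\,{\rm d}\theta$ supported where $\sin\theta$ is again bounded below. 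Your version dispenses with the partition of unity and splits only at $\pi/2$; the price is the pointwise value $\pi|f(\pi/2)|^2$, which you then control by the standard averaging trick over $[\pi/4,\pi/2]$. Your treatment of the endpoint limits $\theta|f(\theta)|^2\to 0$ via the logarithmic bound on $\int_a^{\pi/2}{\rm d}s/\sin s$ is a point the original proof glosses over, so in that respect your write-up is slightly more careful. Either route yields the estimate with only elementary tools; the partition-of-unity version generalizes more mechanically to other weights, while yours is marginally shorter here. (A cosmetic remark: the bound on $2\,{\rm Re}\int_\eta^{\pi/2}\bar f f'$ picks up an extra factor of $2$ beyond the $\sqrt{2}$ you wrote, but this is absorbed into~$C$.)
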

 For the next result, we  introduce  $s_N, 
\widetilde{s}_N\in\mathbb{D}_N^{\rm e}$, the orthogonal and interpolating approximations of the $\sin(\cdot)$ function
on $[0, \pi]$. That is, 
\begin{subequations}
\label{eq:def:sn:sntilde}
\begin{eqnarray}
s_N &\in& \ \mathbb{D}_N^{\rm e}  \quad \text{such that }\quad
s_N(\theta_j)=\sin\theta_j,\quad j=0,\ldots, N,
\label{eq:def:sn}\\
\widetilde{s}_N  &\in & \mathbb{D}_N^{\rm e} \quad ~\text{such that }\quad \int_0^\pi
(\widetilde{s}_N(\theta)-\sin\theta)p_N(\theta)\,{\rm d}\theta=0,\quad  \forall
p_N \in\mathbb{D}_N^{\rm e}\label{eq:def:sntilde}.
\end{eqnarray}
\end{subequations}

\begin{lemma}\label{lemma:sin_approx} For all $N\ge 1$
\[
\|\widetilde{s}_N-\sin(\cdot)\|_{L^\infty(0,\pi)}\le \frac{2}{\pi N},\quad
\|s_N-\widetilde{s}_N\|_{L^\infty(0,\pi)} \le \frac{2}{\pi N}.
\]
\end{lemma}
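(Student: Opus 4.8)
The plan is to estimate the two quantities $\|\widetilde s_N-\sin(\cdot)\|_{L^\infty(0,\pi)}$ and $\|s_N-\widetilde s_N\|_{L^\infty(0,\pi)}$ separately, exploiting the explicit structure of $\mathbb{D}_N^{\rm e}$ as cosine polynomials of degree $N$. First I would observe that $s_N$ is the type-I DCT interpolant of $\sin\theta$ at the nodes $\theta_j=j\pi/N$, while $\widetilde s_N$ is the truncated cosine-Fourier series of $\sin\theta$ of order $N$. Both objects are classical, so I would reduce everything to the even $2\pi$-periodic extension of $\sin|\theta|$ on $[-\pi,\pi]$ and compute its cosine-Fourier coefficients $c_k$ explicitly: since $\sin|\theta|$ has Fourier series $c_0=\tfrac{2}{\pi}$, $c_k=-\tfrac{2}{\pi}\tfrac{1+(-1)^k}{k^2-1}$ for $k\ge 2$ (and $c_1=0$), the tail $\sum_{k>N}|c_k|$ is $O(N^{-1})$.

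For the first bound, I would write $\widetilde s_N(\theta)-\sin\theta = -\sum_{k>N} c_k\cos k\theta$, so that by the triangle inequality $\|\widetilde s_N-\sin(\cdot)\|_{L^\infty(0,\pi)}\le \sum_{k>N}|c_k|$. Only even $k$ contribute, and for even $k\ge N+2$ one has $|c_k|=\tfrac{4}{\pi}\tfrac{1}{k^2-1}\le \tfrac{4}{\pi k^2}$; summing $\sum_{k\ge N+2,\,k\text{ even}} \tfrac{4}{\pi k^2}$ and comparing with an integral gives a bound of the form $\tfrac{2}{\pi N}$ after a short estimate (one can be slightly generous since the claim is an inequality, not an asymptotic). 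For the second bound, I would use the aliasing formula for DCT interpolation: $s_N-\widetilde s_N$ has cosine coefficients obtained by folding the tail coefficients $c_k$ ($k>N$) back into frequencies $\le N$, with the sign/index governed by $k\mapsto$ its reflection modulo $2N$. Hence the coefficients of $s_N-\widetilde s_N$ are sums of subsets of $\{c_k:k>N\}$, and again $\|s_N-\widetilde s_N\|_{L^\infty(0,\pi)}\le \sum_{k>N}|c_k|\le \tfrac{2}{\pi N}$ by the same tail estimate.

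The one technical point that needs care is the precise form of the DCT-I aliasing relation (the folding map on frequencies for the endpoint-inclusive grid $\theta_j=j\pi/N$), which differs slightly from the standard periodic aliasing $k\equiv k'\pmod{2N}$ because of the half-weighted endpoints; I would either cite the analogous computation already used for $\mathrm{I}_N$ (the relation between $\mathrm{q}_N$ and $\mathrm{I}_N$ via even extension, together with \eqref{eq:interp_new_rep}-style aliasing) or verify it directly by testing against $\cos\ell\theta_j$ for $0\le\ell\le N$. Once the folding map is pinned down, the remaining work is the elementary tail sum above, and the main obstacle is simply bookkeeping the even-index condition so that the constant comes out as the stated $\tfrac{2}{\pi N}$ rather than something larger.
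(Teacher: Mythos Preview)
Your approach is essentially the same as the paper's: expand $\sin\theta$ in its cosine series on $[0,\pi]$, identify $\widetilde s_N$ as the partial sum and $s_N$ as the aliased interpolant, and bound both errors by the tail $\sum_{k>N}|c_k|$. The paper works out the even-$N$ case explicitly (noting the odd case is similar) and carries out the aliasing bookkeeping you flag as the ``one technical point''; your description of the folding map $k\mapsto k'\in[0,N]$ via $k\equiv\pm k'\pmod{2N}$ is correct, and each tail coefficient indeed appears exactly once.

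One slip to fix: your inequality $\tfrac{1}{k^2-1}\le\tfrac{1}{k^2}$ is backwards, so the integral comparison as written does not deliver the constant $\tfrac{2}{\pi N}$. The paper avoids this by telescoping instead: with $k=2j$ and $\tfrac{1}{4j^2-1}=\tfrac12\bigl(\tfrac{1}{2j-1}-\tfrac{1}{2j+1}\bigr)$, one gets exactly $\sum_{j>N/2}\tfrac{4}{\pi(4j^2-1)}=\tfrac{2}{\pi(N+1)}\le\tfrac{2}{\pi N}$ for even $N$, and this single tail estimate then suffices for both claimed bounds.
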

\begin{proof} We  prove the result for even $N$. The odd $N$ case follows similarly. Straightforward
calculations show that for $\theta \in [0, \pi]$
\[
 \sin \theta=\frac{2}{\pi}-\frac{4}{\pi}\sum_{j=1}^\infty
\frac{\cos2j\theta}{4j^2-1}.
\]
On the other hand, we have the aliasing effect 
\[
 {\rm q}_{N}^{\rm e}\big(\cos((2j+2\ell N)\,\cdot\,)\big)=\cos(2j\,\cdot\,)=
 {\rm q}_{N}^{\rm e}\big(\cos((-2j+2\ell N)\,\cdot\,)\big),\quad  0\le
j\le N,\quad \forall \, \ell\in\mathbb{Z}.
\]
These properties imply that
\begin{eqnarray*}
 \widetilde{s}_N\modification{(\theta)}&=&\frac{2}{\pi}-\frac{4}{\pi}\sum_{j=1}^{N/2}
\frac{\cos2j\theta}{4j^2-1}\\
{s}_N(\theta)&=&\frac{2}{\pi}\bigg[1-\sum_{\ell=1}^\infty\frac{2}{(2\ell
N)^2-1} \bigg]\\
&&-\frac { 4 } { \pi } \sum_ { j=1 } ^{N/2-1}
\bigg[\frac{1}{4j^2-1}+
\sum_{\ell=1}^\infty\bigg( \frac{1}{(2j+2\ell
N)^2-1}+\frac{1}{(-2j+2\ell
N)^2-1}\bigg) \bigg]\cos 2j\theta\\
&& -\frac{4}{\pi}\bigg[\frac{1}{N^2-1}+ \sum_{\ell=1}^\infty
\frac{1}{(N+2\ell N)^2-1} \bigg]\cos  N\theta.
\end{eqnarray*}
Then, for any $\theta\in[0,\pi]$,
\begin{equation}\label{eq:sin-sntilde}
 |\widetilde{s}_N(\theta)-\sin \theta |\le \frac{4}{\pi}\sum_{j=N/2+1}^\infty
\frac{1}{4j^2-1}=\frac{2}{\pi  (N+1)}  \leq \frac{2}{\pi  N}.
\end{equation}
On the other hand, 
\begin{eqnarray*}
|\widetilde{s}_N(\theta)-s_N(\theta)|&\le&\frac{4}{\pi}\bigg[
\sum_{\ell=2}^\infty
\frac{1}{(\ell N)^2-1}\\
&&+\sum_{j=1}^{N/2-1}\sum_{ \ell=1 }^\infty\bigg(
\frac{1}{(2j+2\ell N)^2-1}+\frac{1}{((N-2j)+(2\ell-1) N)^2-1}\bigg)\bigg]
\\
&=&\frac{4}{\pi}\bigg[
\sum_{\ell=1}^\infty
\frac{1}{(N+\ell N)^2-1}+\sum_{j=1}^{N/2-1}\sum_{ \ell=1 }^\infty
\frac{1}{(2j+\ell N)^2-1}\bigg] \\
&=&\frac{4}{\pi} \sum_{j=1}^{N/2}\sum_{ \ell=1 }^\infty
\frac{1}{(2j+\ell N)^2-1}\le\frac{2}{\pi} \sum_{j=1}^{N}\sum_{ \ell=1 }^\infty
\frac{1}{(j+\ell N)^2-1}\\
&=&\frac{2}{\pi}\sum_{j=N+1}^\infty
\frac{1}{ j^2-1}=\frac{2}{\pi N}.
\end{eqnarray*}
\end{proof}

In order to prove the next result, it is convenient to consider the quadrature rules
\begin{eqnarray*}
 {\cal L}^1_{N}g&:=&\frac{\pi}{N}
 \sum_{k=0}^{N}\dprime g\big(\tfrac{k\pi}N\big),\quad \qquad 
 {\cal L}^2_{N}g:=\frac{\pi}{N}\sum_{j=1}^{N}
g({\textstyle\frac{(j-1/2)\pi} N}). 
\end{eqnarray*}
Since
\[
  {\cal L}^1_{N}\cos(m\cdot)=\left\{\begin{array}{ll}
                                     \pi,&\text{if $m=2\ell N$},\\
                                     0,&\text{otherwise},
                                    \end{array}\right.\quad
  {\cal L}^2_{N}\cos(m\cdot)=\left\{\begin{array}{ll}
                                     (-1)^{\ell}\pi,&\text{if $m=2\ell
N$},\\
                                     0,&\text{otherwise},
                                    \end{array}\right.\quad
\]
we easily deduce the equalities
\[
 {\cal L}_N^1g_N={\cal L}_N^2g_N=\int_{0}^\pi g_N(\theta)\,{\rm d}\theta,\qquad
\forall g_N\in
\mathbb{D}_{2N-1}^{\rm e}, 
\]
and therefore
\begin{equation}\label{eq:prop01:Ln}
 {\cal L}^1_N(  |g_N|^2)=
 {\cal L}^2_N ( |g_N|^2)= \|g_N\|_{H_\#^0}^2,\quad \forall g_N\in
\mathbb{D}_{N-1}^{\rm e}\cup \mathbb{D}_{N-1}^{\rm o}. 
\end{equation}
Moreover, 
\begin{equation}\label{eq:prop01b:Ln}
 {\cal L}^1_N ( |g_N|^2)\le 2 \|g_N\|_{H_\#^0}^2,\quad 
 {\cal L}^2_N(|g_N|^2)\le  \|g_N\|_{H_\#^0}^2, \quad \forall g_N\in
\mathbb{D}_N^{\rm e}. 
\end{equation}
Besides, for $g$ sufficiently smooth 
\begin{equation}\label{eq:prop02:Ln}
 {\cal L}_N^2g- {\cal L}_N^1 g=-4\sum_{\ell=0}^\infty
\int_0^{\pi} g(\theta)\cos(2N(1+2\ell) \theta)\,{\rm d}\theta.
\end{equation}
That is, the difference in the quadrature rules is 
four times the sum of the Fourier coefficients in the cosine series of 
order $2(1+2\ell) N$. 

The well known estimate for the error of the composite
rectangular rule
\begin{equation}\label{eq:QuadError}
\bigg|
 {\cal L}_N^1g-\int_0^\pi g(\theta)\,{\rm d}\theta\bigg|\le \pi N^{-1}\int_0^\pi |g'(\theta)|\,{\rm d}\theta
 \end{equation}
 will be used repeatedly in this section. Further it is useful to note the relation 
\begin{equation}\label{eq:L2n}
{\cal
L}^1_{2N}=\tfrac{1}2\big({\cal L}^1_N+{\cal L}^2_N\big).
\end{equation}
In  the proofs below we use the fact that \modification{if} $f\in W_m^1$, with $m\ne 0$, then
$f$ is continuous with $f(0)=f(\pi)=0$.


\begin{proposition}\label{prop:01}
Suppose that Hypothesis \ref{hypo:01} holds. Let $f\in W_m^1$ with $m\ne 0$. Then
\begin{equation}\label{eq:prop4.5_bound}
 \|\mathrm{q}_{N}^{m}f\|_{L^2_{\sin}}\le C\big[
 \|f\|_{L^2_{\sin}} + N^{-1} \|f'\|_{L^2_{\sin}} +
N^{-1/2}\|\mathrm{q}_{N}^mf\|_{H^0_{\#}}
 \big]
\end{equation}
with $C$ independent of $f$ and  $m$.
\end{proposition}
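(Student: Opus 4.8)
The plan is to compute $\|\mathrm{q}_N^m f\|_{L^2_{\sin}}^2$ exactly in terms of the interpolation values via Gauss-type quadrature, isolate a single ``aliasing'' term of the form $-2\int_0^\pi|\mathrm{q}_N^m f|^2\cos(2N\theta)\sin\theta\,{\rm d}\theta$ that Hypothesis~\ref{hypo:01} allows us to move to the left-hand side, and dispatch the remaining quadrature error classically. Write $g_N:=\mathrm{q}_N^m f$. Since $m\ne 0$, $f\in W_m^1$ is continuous with $f(0)=f(\pi)=0$ (the discussion after Theorem~\ref{theo:equiv_norms}), so the interpolant satisfies $g_N(0)=g_N(\pi)=0$; hence $g_N\in\mathbb{D}_{N-2}^{\rm o}=A_N^2$ when $m$ is odd, while when $m$ is even $g_N\in\mathbb{D}_N^{\rm e}$ vanishes at $0,\pi$, so $g_N=\sin^2(\cdot)\,q_{N-2}$ with $q_{N-2}\in\mathbb{D}_{N-2}^{\rm e}$, i.e. $g_N\in A_N^3$. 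In both cases Hypothesis~\ref{hypo:01} applies to $g_N$. Moreover $|g_N|^2$ is a real, even trigonometric polynomial of degree $\le 2N$, so $|g_N|^2 s_N$ has degree $\le 3N\le 4N-1$ (with $s_N$ the interpolant of $\sin$ from Lemma~\ref{lemma:sin_approx}), and $\int_0^\pi|g_N|^2\,{\rm d}\theta\le C\|g_N\|_{H^0_\#}^2$ (by \eqref{eq:prop01b:Ln}). Since $\|s_N-\sin(\cdot)\|_{L^\infty(0,\pi)}\le 4/(\pi N)$ (Lemma~\ref{lemma:sin_approx}), for any $\chi$ with $\|\chi\|_\infty\le1$ one has $\bigl|\int_0^\pi|g_N|^2(s_N-\sin)\chi\,{\rm d}\theta\bigr|\le CN^{-1}\|g_N\|_{H^0_\#}^2$; in particular $\|g_N\|_{L^2_{\sin}}^2=\int_0^\pi|g_N|^2 s_N\,{\rm d}\theta+O(N^{-1}\|g_N\|_{H^0_\#}^2)$.

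Next I would write $\int_0^\pi|g_N|^2 s_N\,{\rm d}\theta={\cal L}_{2N}^1(|g_N|^2 s_N)=\tfrac12{\cal L}_N^1(|g_N|^2 s_N)+\tfrac12{\cal L}_N^2(|g_N|^2 s_N)$ by \eqref{eq:L2n} and the exactness of ${\cal L}_{2N}^1$ on $\mathbb{D}_{4N-1}^{\rm e}$. The rule ${\cal L}_N^1$ is based at the nodes $\theta_k$, where $s_N(\theta_k)=\sin\theta_k$ and $g_N(\theta_k)=f(\theta_k)$ (the $\theta_0,\theta_N$ contributions being irrelevant since $\sin$ vanishes there), so ${\cal L}_N^1(|g_N|^2 s_N)={\cal L}_N^1(|f|^2\sin(\cdot))=:B$. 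For the second rule, ${\cal L}_N^2(|g_N|^2 s_N)=B+({\cal L}_N^2-{\cal L}_N^1)(|g_N|^2 s_N)$, and since $|g_N|^2 s_N$ is even of degree $\le 3N<6N$ the series \eqref{eq:prop02:Ln} collapses to its $\ell=0$ term, giving $({\cal L}_N^2-{\cal L}_N^1)(|g_N|^2 s_N)=-4\int_0^\pi|g_N|^2 s_N\cos(2N\theta)\,{\rm d}\theta=-4\int_0^\pi|g_N|^2\cos(2N\theta)\sin\theta\,{\rm d}\theta+O(N^{-1}\|g_N\|_{H^0_\#}^2)$. Assembling,
\[
\|g_N\|_{L^2_{\sin}}^2=B-2\int_0^\pi|g_N|^2\cos(2N\theta)\sin\theta\,{\rm d}\theta+O(N^{-1}\|g_N\|_{H^0_\#}^2),
\]
and Hypothesis~\ref{hypo:01} (with $p_N=g_N\in A_N^2$ or $A_N^3$) bounds the middle term by $c_{\rm H}\|g_N\|_{L^2_{\sin}}^2$ with $c_{\rm H}<1$. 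Moving it to the left yields $(1-c_{\rm H})\|g_N\|_{L^2_{\sin}}^2\le B+CN^{-1}\|g_N\|_{H^0_\#}^2$.

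It remains to bound $B={\cal L}_N^1(|f|^2\sin(\cdot))$ by applying the composite rectangle-rule error \eqref{eq:QuadError} to $g=|f|^2\sin(\cdot)$: $\bigl|B-\|f\|_{L^2_{\sin}}^2\bigr|\le\pi N^{-1}\int_0^\pi\bigl|(|f|^2\sin)'\bigr|\,{\rm d}\theta$. Since $(|f|^2\sin)'=2\,{\rm Re}(\overline f f')\sin\theta+|f|^2\cos\theta$, the $L^1$ norm of the first summand is $\le 2\|f\|_{L^2_{\sin}}\|f'\|_{L^2_{\sin}}$ by Cauchy--Schwarz, and that of $|f|^2\cos\theta$ is controlled by splitting $[0,\pi]$ into neighbourhoods of the two poles (where $\cos\theta$ keeps a fixed sign: integrate by parts using that $\sin$ vanishes at $0,\pi$, and bound the resulting interior value of $|f|^2$ by a one-dimensional Sobolev embedding on a subinterval bounded away from the poles) and the complementary region (where $\sin\theta$ is bounded below). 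This gives $B\le\|f\|_{L^2_{\sin}}^2+CN^{-1}\bigl(\|f\|_{L^2_{\sin}}^2+\|f'\|_{L^2_{\sin}}^2\bigr)$; inserting this into the previous display, absorbing $CN^{-1}\|f\|_{L^2_{\sin}}^2$ into $\|f\|_{L^2_{\sin}}^2$, and taking square roots gives the stated bound, the constant being $m$-independent because $m$ enters only through the parity of $\mathrm{q}_N^m$ and the vanishing of $g_N$ at $0,\pi$, while $c_{\rm H}$ is $m$- and $N$-independent.

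The main obstacle is the middle step: forcing the aliasing correction into \emph{precisely} the form $-2\int_0^\pi|g_N|^2\cos(2N\theta)\sin\theta\,{\rm d}\theta$, with exactly that constant, so that the bound $c_{\rm H}<1$ of Hypothesis~\ref{hypo:01} produces a coercive factor $(1-c_{\rm H})$ on the left rather than being vacuous. This is the sole point where the structural hypothesis intervenes, and it is what dictates the use of the subspaces $A_N^2,A_N^3$ and the endpoint conditions $g_N(0)=g_N(\pi)=0$; all other steps are routine degree counting and quadrature bookkeeping.
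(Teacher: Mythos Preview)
Your proposal is correct and follows the paper's argument essentially step for step: replace $\sin$ by its interpolant $s_N$, use exactness of ${\cal L}^1_{2N}$ and the identity ${\cal L}^1_{2N}=\tfrac12({\cal L}^1_N+{\cal L}^2_N)$, evaluate ${\cal L}^1_N$ at the interpolation nodes to get $B={\cal L}^1_N(|f|^2\sin(\cdot))$, isolate the aliasing correction $-2\int_0^\pi|g_N|^2\cos(2N\theta)\sin\theta\,{\rm d}\theta$ via \eqref{eq:prop02:Ln}, and then invoke Hypothesis~\ref{hypo:01} to absorb it into the left-hand side with factor $1-c_{\rm H}$.

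The one point where you diverge from the paper is the estimate of $B$. After applying the rectangle-rule bound \eqref{eq:QuadError}, the paper simply uses $\int_0^\pi|f|^2|\cos\theta|\,{\rm d}\theta\le\int_0^\pi|f|^2\,{\rm d}\theta$ and then invokes Lemma~\ref{lemma:ineqY1}, which gives $\|f\|_{H^0_\#}\le C[\|f\|_{L^2_{\sin}}+\|f\|_{L^2_{\sin}}^{1/2}\|f'\|_{L^2_{\sin}}^{1/2}]$ directly; combined with $2ab\le Na^2+N^{-1}b^2$ this yields the $N^{-2}\|f'\|_{L^2_{\sin}}^2$ term cleanly. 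Your alternative (pole-neighbourhood integration by parts plus a Sobolev embedding on a compact subinterval) is valid and yields the same bound, but it is effectively an inline reproof of Lemma~\ref{lemma:ineqY1}; citing that lemma, which you already have available, is shorter. A minor remark: your reference to \eqref{eq:prop01b:Ln} for $\int_0^\pi|g_N|^2\,{\rm d}\theta\le C\|g_N\|_{H^0_\#}^2$ is unnecessary---that inequality is just Parseval (indeed equality with $C=\pi$); \eqref{eq:prop01b:Ln} concerns the discrete sums ${\cal L}^j_N(|g_N|^2)$.
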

\begin{proof}
Since
\begin{equation}
 \label{eq:00:Prop:01}
  \|\mathrm{q}_{N}^mf\|_{L^2_{\sin}}^2 =
\int_0^\pi |\mathrm{q}_{N}^mf(\theta)|^2 s_N(\theta)\,{\rm d}\theta+
\int_0^\pi |\mathrm{q}_{N}^mf(\theta)|^2 (\sin\theta -
s_N(\theta))\,{\rm d}\theta, 
\end{equation}
using  Lemma \ref{lemma:sin_approx},
\begin{eqnarray*}
   \|\mathrm{q}_{N}^mf\|_{L^2_{\sin}}^2 &\le& \int_0^\pi
g_N(\theta)\,{\rm d}\theta+
CN^{-1}\|\mathrm{q}_{N}^mf\|_{H^0_{\#}}^2= {\cal L}^1_{2N}g_N+
CN^{-1}\|\mathrm{q}_{N}^mf\|_{H^0_{\#}}^2
 \label{eq:01:Prop:01}
\end{eqnarray*}
where $g_N:= |\mathrm{q}_{N}^mf|^2s_N\in \mathbb{D}_{3N}^{\rm e}$.
Applying \eqref{eq:L2n} we deduce the bound
\begin{equation}
 \label{eq:02:Prop:01}
   \|\mathrm{q}_{N}^mf\|_{L^2_{\sin}}^2\le 
   {\cal L}^1_{N}g_N+ {\textstyle\frac{1}{2}}\big( {\cal L}^2_Ng_N-{\cal
L}_N^1g_N\big)+ CN^{-1}\|\mathrm{q}_{N}^m f\|_{H^0_\#}. 
\end{equation}
Since  $f(0)=f(\pi)=0$ and 
$\mathrm{q}_N^mf(\theta_j)=
f(\theta_j)$, for all $j=0,\ldots, N$, using Lemma \ref{lemma:sin_approx} we easily deduce 
the following bound:
\begin{eqnarray}
 {\cal L}^1_Ng_N&=&\frac{\pi}{N}\sum_{j=1}^{N-1} |f(\theta_j)|^2
\sin\theta_j
+\frac{\pi}{N}\sum_{j=1}^{N-1}
|\mathrm{q}_N^{m}f(\theta_j)|^2(s_N(\theta_j)-\sin\theta_j)\nonumber\\
&\le& \frac{1}{N} \sum_{j=1}^{N-1} |f(\theta_j)|^2\sin\theta_j
+C N^{-1} {\cal L}^1_N(|\mathrm{q}_N^m f|^2)=: E_1+
 C N^{-1} {\cal L}^1_N(|\mathrm{q}_N^m f|^2).
 \label{eq:03:Prop:01}
\end{eqnarray}
Also, using~\eqref{eq:prop01b:Ln}, 
\begin{equation} \label{eq:03b:Prop:01}
 {\cal L}^1_N(|\mathrm{q}_N^m f|^2)\le 2\|\mathrm{q}_N^m f\|^2_{H_\#^0}.
\end{equation}
On the other hand,  $E_1$  in~\eqref{eq:03:Prop:01} can be bounded using the error of the rectangular rule \eqref{eq:QuadError}:
\begin{eqnarray}\label{eq:04:Prop:01}
 E_1&=& \int_0^\pi |f(\theta)|^2\sin\theta\,{\rm d}\theta + 
\bigg[ {\cal L}_N^1(|f|^2\sin(\cdot)) - \int_{0}^{\pi} |f(\theta)|^2\sin\theta\,{\rm d}\theta\bigg]\nonumber\\
&\le&\| f\|^2_{L^2_{\rm sin}}+\frac{\pi}{N}\int_0^{\pi} |(|f(\theta)|^2\sin\theta)'|\,{\rm d}\theta\nonumber\\
&\le& \| f\|^2_{L^2_{\rm sin}}+\frac{\pi}{N}\int_0^{\pi} |f(\theta)|^2\,{\rm d}\theta
+\frac{\pi}{N}\int_0^{\pi} |2f(\theta)f'(\theta)|\sin\theta\,{\rm d}\theta\nonumber \\
&\le& \| f\|^2_{L^2_{\rm sin}} + C N^{-1}\bigg[\|f\|^2_{L^2_{\rm sin}}+\|f\|_{L^2_{\rm sin}}\|f'\|_{L^2_{\rm sin}}+\int_0^{\pi} |f(\theta)f'(\theta)|\sin\theta\,{\rm d}\theta\bigg] \nonumber\\
&\le& \| f\|^2_{L^2_{\rm sin}} + C N^{-2}\|f'\|^2_{L^2_{\rm sin}}.
\end{eqnarray}
We stress that in \eqref{eq:04:Prop:01}  we have used Lemma \ref{lemma:ineqY1} and, in the last step, the inequality
\[
 \|f\|_{L^2_{\rm sin}}\|f'\|_{L^2_{\rm sin}}+\int_0^{\pi} |f(\theta)f'(\theta)|\sin\theta\le N \|f\|_{L^2_{\rm sin}}+ N^{-1}
 \|f'\|_{L^2_{\rm sin}}.
\]
Finally,  using the fact that $ |\mathrm{q}_N^m f|^2 s_N\in\mathbb{D}_{3N}^{\rm e}$ and
applying \eqref{eq:prop02:Ln} and 
Hypothesis \ref{hypo:01},  we deduce the bound
\begin{eqnarray}
 \frac{1}2\big( {\cal L}^2_Ng_N-{\cal
L}^1_Ng_N\big)&=&-2\int_0^\pi |\mathrm{q}_N^m f|^2(\theta)s_N(\theta)
\cos(2N\theta)\,{\rm d}\theta\nonumber\\
&=&-2\int_0^\pi |\mathrm{q}_N^m f|^2(\theta)\sin\theta
\cos(2N\theta)\,{\rm d}\theta\nonumber\\
&&-2\int_0^\pi |\mathrm{q}_N^m f|^2(\theta)(\sin\theta-s_{N}(\theta))
\cos(2N\theta)\,{\rm d}\theta\nonumber\\
&\le& \max\{c_{\mathrm{H}}^{(2)}, c_{\mathrm{H}}^{(3)}\}\int_0^\pi
|\mathrm{q}_N^mf|^2(\theta)\sin  \theta \,{\rm d}\theta + C N^{-1}
\|\mathrm{q}_N^mf\|^2_{H^0_\#},\label{eq:LN2-Ln1}
\end{eqnarray}
where we have used again Lemma \ref{lemma:sin_approx}. Collecting
\eqref{eq:03:Prop:01}-\eqref{eq:LN2-Ln1} in \eqref{eq:01:Prop:01}  (with $c_H:=\max\{c_H^{(2)}, c_H^{(3)}\}$) we conclude
\[
 (1-c_{\mathrm{H}})\|\mathrm{q}_{N}^mf\|_{L^2_{\sin}}^2\le C\big[
 \|f\|^2_{L^2_{\sin}} + N^{-2} \|f'\|^2_{L^2_{\sin}} +
N^{-1}\|\mathrm{q}_{N}^mf\|^2_{H^0_{\#}}
 \big].
\]
Hence the desired result \eqref{eq:prop4.5_bound} follows.
\end{proof}
\begin{corollary}\label{cor:01}
Suppose that Hypothesis \ref{hypo:01} holds.
For all $r>1$ there exists $C_r>0$ so that for all $f\in W_m^r$ [with 
$f(0)=f(\pi)=0$ for $m = 0$]
\begin{equation}\label{eq:cor4.6-bd}
 \|\mathrm{q}_{N}^m f\|_{W_{ m}^0}\le C\big[
 \|f\|_{W_{m}^0}+ N^{-1} \|f \|_{W_{ m}^1} + N^{-r} \|f \|_{W_{ m}^r}
 \big],
 \end{equation}
 with $C_r$ independent of $N$, $m$ and $f$.
\end{corollary}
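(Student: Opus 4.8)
The plan is to derive~\eqref{eq:cor4.6-bd} from the single estimate~\eqref{eq:prop4.5_bound} of Proposition~\ref{prop:01}, by showing that the last summand $N^{-1/2}\|\mathrm{q}_N^m f\|_{H^0_\#}$ there is itself controlled by the right-hand side of~\eqref{eq:cor4.6-bd}. First, I note that~\eqref{eq:prop4.5_bound} is also valid for $m=0$ once the extra hypothesis $f(0)=f(\pi)=0$ is imposed: in that case $\mathrm{q}_N^0 f=\mathrm{q}_N^{\rm e}f\in\mathbb{D}_N^{\rm e}$ satisfies $\mathrm{q}_N^0 f(0)=f(0)=0$ and $\mathrm{q}_N^0 f(\pi)=f(\pi)=0$, hence $\mathrm{q}_N^0 f\in A_N^3$, so Hypothesis~\ref{hypo:01} with $j=3$ applies to it just as in the proof of Proposition~\ref{prop:01} (with $c_{\rm H}^{(3)}$ playing the role of $\max\{c_{\rm H}^{(2)},c_{\rm H}^{(3)}\}$), and every remaining step of that proof transfers verbatim. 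Hence, using $\|f\|_{W_m^0}=\|f\|_{L^2_{\sin}}$ and $\|f'\|_{L^2_{\sin}}\le\|f\|_{W_m^1}$ (a consequence of Theorem~\ref{theo:equiv_norms}), for all $m\in\Z$ and all admissible $f$ we have
\[
 \|\mathrm{q}_N^m f\|_{W_m^0}\le C\Big[\|f\|_{W_m^0}+N^{-1}\|f\|_{W_m^1}+N^{-1/2}\|\mathrm{q}_N^m f\|_{H^0_\#}\Big].
\]

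It remains to bound the last term, and for this I would split $\|\mathrm{q}_N^m f\|_{H^0_\#}\le\|\mathrm{q}_N^m f-f\|_{H^0_\#}+\|f\|_{H^0_\#}$. For the interpolation error I apply~\eqref{eq:qnmHs} with $s=0$ and $t=r-\tfrac12$; this is legitimate since $r>1$ makes $t>\tfrac12$, since $f\in W_m^r\subset W_m^1$, and since $f\in H^{r-1/2}_\#$ with $\|f\|_{H^{r-1/2}_\#}\le C_r\|f\|_{W_m^r}$ by Proposition~\ref{prop:inclusion_in_Hr}. This yields
\[
 N^{-1/2}\|\mathrm{q}_N^m f-f\|_{H^0_\#}\le C_r\,N^{-1/2}\,N^{-(r-1/2)}\|f\|_{H^{r-1/2}_\#}\le C_r\,N^{-r}\|f\|_{W_m^r}.
\]
For the term $N^{-1/2}\|f\|_{H^0_\#}$, note that $f\in W_m^1$ implies $f\in Z_0^1$ with $\|f\|_{Z_0^1}\le\|f\|_{W_m^1}$ (Theorem~\ref{theo:equiv_norms}), so Lemma~\ref{lemma:ineqY1} gives $\|f\|_{H^0_\#}\le C\big(\|f\|_{L^2_{\sin}}+\|f\|_{L^2_{\sin}}^{1/2}\|f'\|_{L^2_{\sin}}^{1/2}\big)$; multiplying by $N^{-1/2}$, using $N\ge1$ on the first summand, and applying the weighted arithmetic--geometric inequality $N^{-1/2}\|f\|_{L^2_{\sin}}^{1/2}\|f'\|_{L^2_{\sin}}^{1/2}\le\tfrac12\big(\|f\|_{L^2_{\sin}}+N^{-1}\|f'\|_{L^2_{\sin}}\big)$ to the second gives $N^{-1/2}\|f\|_{H^0_\#}\le C\big(\|f\|_{W_m^0}+N^{-1}\|f\|_{W_m^1}\big)$. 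Combining these estimates proves~\eqref{eq:cor4.6-bd}, with the constant depending only on $r$.

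The step I expect to be the main obstacle is the control of $N^{-1/2}\|f\|_{H^0_\#}$. Because $\|\cdot\|_{H^0_\#}$ is, up to a fixed constant, the \emph{unweighted} $L^2(0,\pi)$ norm, it is not dominated by $\|f\|_{W_m^0}=\|f\|_{L^2_{\sin}}$; a crude bound such as $\|f\|_{H^0_\#}\le C\|f\|_{W_m^1}$ (via Proposition~\ref{prop:inclusion_in_Hr} with parameter $\tfrac12$) would leave $N^{-1/2}\|f\|_{W_m^1}$, which is too large for the claimed $N^{-1}\|f\|_{W_m^1}$. The exact cancellation is recovered only by feeding the interpolation inequality of Lemma~\ref{lemma:ineqY1} into a Young inequality carrying the weight $N^{-1}$. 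A secondary point requiring care is the $m=0$ case: the endpoint-vanishing hypothesis is precisely what is needed to place $\mathrm{q}_N^0 f$ in $A_N^3$, keeping Hypothesis~\ref{hypo:01} --- and therefore the whole proof of Proposition~\ref{prop:01} --- available.
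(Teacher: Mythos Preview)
Your proof is correct and follows essentially the same route as the paper: apply Proposition~\ref{prop:01}, split $N^{-1/2}\|\mathrm{q}_N^m f\|_{H^0_\#}$ into the interpolation error (bounded via~\eqref{eq:qnmHs} and Proposition~\ref{prop:inclusion_in_Hr}) and $N^{-1/2}\|f\|_{H^0_\#}$ (handled by Lemma~\ref{lemma:ineqY1} together with a weighted Young inequality). Your explicit treatment of the $m=0$ case---observing that the endpoint hypothesis places $\mathrm{q}_N^0 f$ in $A_N^3$ so that Hypothesis~\ref{hypo:01} still fires---is in fact more careful than the paper, which invokes Proposition~\ref{prop:01} for all $m$ without spelling this out.
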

\begin{proof} 
In light of Proposition \ref{prop:01}, we just have to bound 
\begin{equation}\label{eq:01:cor:01}
 N^{-1/2}\|{\rm q}_{N}^m f\|_{H^0_\#}\le  N^{-1/2}\|{\rm
q}_N^m f-f\|_{H^0_\#}+N^{-1/2}\|f\|_{H^0_\#}. 
\end{equation}
For the second term we can apply Lemma \ref{lemma:ineqY1} and  the
inequality $2ab\le N^{1/2}a+N^{-1/2}b$, to show that 
\begin{eqnarray}
 N^{-1/2}\|f\|_{H^0_\#}&\le&
C N^{-1/2}\big(\|f\|_{L^2_{\rm sin}}+\|f\|^{1/2}_{L^2_{\rm
sin}}\|f'\|^{1/2}_{L^2_{\rm sin}} \big)\le 
C\big[\|f\|_{L^2_{\rm sin}}+ N^{-1}\|f'\|_{L^2_{\rm sin}}\big].
 \label{eq:02:cor:01}
\end{eqnarray}
On the other hand, \eqref{eq:qnmHs} implies that for all $r>1$ there exists
$C_r$ so that
\begin{equation}
 N^{-1/2}\| \mathrm{q}_N^{m}f-f\|_{H^0_\#}\le C_rN^{-r}\|f\|_{H^{r-1/2}_\#}\le
C_r'N^{-r}\|f\|_{W_{m}^r}, \label{eq:03:cor:01}
\end{equation}
where we have applied in the last step Proposition \ref{prop:inclusion_in_Hr}.
We note that $C_r$ is again independent of $m$. Applying \eqref{eq:02:cor:01} and
\eqref{eq:03:cor:01} in \eqref{eq:01:cor:01} we deduce the bound
\[
 N^{-1/2}\|{\rm q}_{N}^m f\|_{H^0_\#}\le C_r\big[
\|f\|_{H^0_\#}+N^{-1}\|f\|_{W_{ m}^{1}}+N^{-r}\|f\|_{W_{ m}^{r}}\big]
\]
and hence the desired results~\eqref{eq:cor4.6-bd} follows.
\end{proof}
%
%

To prove stability estimates in $W_m^1$, it is convenient to introduce the notation
\[
 \|f\|_{L^2_{\sin^{r}}}^2:=\int_0^{\pi}|f(\theta)|^2\sin^r\theta\,{\rm d}\theta,
\]
(in particular with $r=1, -1,-2$ or $-3$) and use the equivalence of
norms described in Theorem \ref{theo:equiv_norms} involving the two terms
 \begin{equation}\label{eq:terms}
 |m|\|{\rm q}_{N}^{m}f\|_{L^2_{\sin^{-1}}} \qquad \text{and} \qquad \|({\rm
q}_{N}^{m}f)'\|_{L^2_{\rm sin}}.
\end{equation}
The second term  is easily controlled by using inverse inequality and the results developed so far.

\begin{lemma} \label{lemma:02}
Suppose that Hypothesis \ref{hypo:01} holds. There exists $C>0$ independent of $m$, $f$ and $N$ such that 
\begin{equation}
\label{eq:01:lemma:02}
  \|({\rm q}_{N}^mf)'\|_{L^2_{\rm sin}} \le C N^{-1} \big[
 \|f\|_{W_{m}^0}+ N^{-1} \|f \|_{W_{ m}^1} + N^{-r} \|f \|_{W_{ m}^r}
 \big].
\end{equation}
\end{lemma}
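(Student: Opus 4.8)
The plan is to bound $\|({\rm q}_N^m f)'\|_{L^2_{\rm sin}}$ by pulling one power of $N$ out of the derivative via the inverse inequality of Lemma~\ref{lemma:inverseIneq} (applied in the form appropriate to the even/odd cases, after reducing the general-$m$ case to $W_0^0$ or $W_1^0$ through the norm equivalences of Theorem~\ref{theo:equiv_norms}), and then controlling the resulting $W_m^0$-norm of ${\rm q}_N^m f$ by Corollary~\ref{cor:01}. Concretely, first I would observe that $({\rm q}_N^m f)' \in \mathbb{D}_{N-1}^{\rm o}$ when $m$ is even and $({\rm q}_N^m f)' \in \mathbb{D}_{N-1}^{\rm e}$ when $m$ is odd (differentiation swaps parity and lowers the top frequency), so the inverse inequality gives $\|({\rm q}_N^m f)'\|_{L^2_{\sin}} \le C N \|{\rm q}_N^m f\|_{L^2_{\sin}}$ — but that loses a full power of $N$ in the wrong direction, so this crude route is not enough and we instead need a Bernstein-type estimate that bounds the derivative in $L^2_{\sin}$ directly by the function in $L^2_{\sin}$ with the \emph{correct} negative power once we remember what we are comparing against.

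The better approach: note that $\|({\rm q}_N^m f)'\|_{L^2_{\sin}}$ is exactly one of the two terms appearing in $\|{\rm q}_N^m f\|_{W_m^1}$ (up to the equivalences in Theorem~\ref{theo:equiv_norms}), and what Lemma~\ref{lemma:02} really asserts is that this particular term is \emph{small} — of order $N^{-1}$ times the data norms — which means ${\rm q}_N^m f$ is close to a constant in $\theta$ at leading order, i.e.\ the derivative term is a genuinely higher-order quantity. The way to see this is to write ${\rm q}_N^m f = {\rm q}_N^m({\rm p}_N^m f) + {\rm q}_N^m(f - {\rm p}_N^m f)$ using the projection ${\rm p}_N^m$ from Proposition~\ref{lemma:01}, which matches $f$ at $\{0,\pi\}$. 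Since ${\rm p}_N^m f$ already lies in $\mathbb{D}_N^{\rm e}$ or $\mathbb{D}_{N-2}^{\rm o}$, the interpolant reproduces it: ${\rm q}_N^m({\rm p}_N^m f) = {\rm p}_N^m f$, so
\[
\|({\rm q}_N^m f)'\|_{L^2_{\sin}} \le \|({\rm p}_N^m f)'\|_{L^2_{\sin}} + \|({\rm q}_N^m(f - {\rm p}_N^m f))'\|_{L^2_{\sin}}.
\]
The first term is controlled because $\|({\rm p}_N^m f)'\|_{L^2_{\sin}} \le \|{\rm p}_N^m f\|_{W_m^1}$, and here one inserts $\pm f$ and uses the approximation estimate $\|{\rm p}_N^m f - f\|_{W_m^1} \le C N^{1-r}\|f\|_{W_m^r}$ together with $\|f\|_{W_m^1} \le \|f\|_{W_m^r}$; actually one wants instead to extract the $N^{-1}$ saving, which comes from writing $\|({\rm p}_N^m f)'\|_{L^2_{\sin}}^2 \le \|{\rm p}_N^m f\|_{W_m^1}^2$ and then recognizing — via the spectral definition \eqref{eq:normWmr} restricted to the projection's frequency range $n \le N-1$ — that after subtracting off the $n=|m|$ (or constant) mode one is comparing $W_m^1$ against $W_m^0$ with an $N$-gain only on the high modes, so the honest bound is the interpolation/inverse combination $\|({\rm p}_N^m f)'\|_{L^2_{\sin}} \le C N^{-1}\bigl[\|f\|_{W_m^0} + N^{-1}\|f\|_{W_m^1} + N^{-r}\|f\|_{W_m^r}\bigr]$ obtained exactly as in the proof of Proposition~\ref{lemma:01} for the $m=0$ case. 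For the second term, apply the inverse inequality to $({\rm q}_N^m g)'$ with $g := f - {\rm p}_N^m f$: $\|({\rm q}_N^m g)'\|_{L^2_{\sin}} \le C N \|{\rm q}_N^m g\|_{L^2_{\sin}}$, and then Corollary~\ref{cor:01} (or Proposition~\ref{prop:01}) bounds $\|{\rm q}_N^m g\|_{L^2_{\sin}}$ by the data norms of $g$, which by Proposition~\ref{lemma:01} are $O(N^{-r})$, leaving a net $O(N^{1-r}) = O(N^{-1}\cdot N^{2-r})$ — absorbed into the stated right-hand side since $r > 1$ (choosing $r$ slightly larger if necessary, or noting the $N^{-r}\|f\|_{W_m^r}$ slot accommodates it).

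The main obstacle is bookkeeping the frequency ranges and parities so that the inverse inequality of Lemma~\ref{lemma:inverseIneq} applies to $({\rm q}_N^m g)'$ with the right weight $\sin\theta$ versus $\sin^{-1}\theta$, and making sure the general-$m$ reduction to the $m=0,1$ cases via Theorem~\ref{theo:equiv_norms} does not introduce $m$-dependent constants — the equivalence constants there are universal ($\sqrt{5}/2$, $\sqrt{3}$), so this is fine, but one must route the odd-$m$ case through $W_1^1 \sim Z_1^1$ and the even nonzero-$m$ case through a comparison with $W_0^1$ plus the factor $|m|\,\|\cdot\|_{L^2_{\sin^{-1}}}$ term, which the \emph{second} component of \eqref{eq:terms} (the derivative term) does not see at all; hence $\|({\rm q}_N^m f)'\|_{L^2_{\sin}}$ is genuinely $m$-independent in its bound and only the first term of \eqref{eq:terms} (handled in the subsequent lemma, not here) carries the $|m|$. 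I would therefore keep the entire argument at the level of the two primitive inverse inequalities of Lemma~\ref{lemma:inverseIneq} for $W_0$ and $W_1$, split off the polynomial part via ${\rm p}_N^m$ so that the interpolation operator acts only on a small remainder, and conclude by feeding the remainder through Corollary~\ref{cor:01}.
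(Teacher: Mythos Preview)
The stated bound with the factor $CN^{-1}$ is a typographical error in the paper: the right-hand side should carry $CN$, not $CN^{-1}$. You can see this from the paper's own two-line proof, which combines the inverse inequality $\|({\rm q}_N^m f)'\|_{L^2_{\sin}}\le CN\|{\rm q}_N^m f\|_{L^2_{\sin}}$ with Corollary~\ref{cor:01}; and you can verify it by checking how the lemma is used downstream in Theorem~\ref{theo:qNmf_1}, where it feeds into the estimate $\|{\rm q}_N^m f\|_{W_m^1}\le C[N\|f\|_{W_m^0}+\|f\|_{W_m^1}+N^{-1}\|f\|_{W_m^2}]$, consistent with a factor $N$, not $N^{-1}$. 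The bound as printed is simply false: take $f=Q_{N-1}^m$, so that ${\rm q}_N^m f=f$ and $\|({\rm q}_N^m f)'\|_{L^2_{\sin}}\sim\|f\|_{W_m^1}\sim N$, while the right-hand side with $N^{-1}$ is $O(N^{-1})$.

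Consequently, what you call the ``crude route'' --- inverse inequality followed by Corollary~\ref{cor:01} --- \emph{is} the paper's proof, and it is correct once the exponent is repaired. Your more elaborate argument via ${\rm p}_N^m$ is an attempt to establish a false inequality; in particular the claim $\|({\rm p}_N^m f)'\|_{L^2_{\sin}}\le CN^{-1}[\|f\|_{W_m^0}+\cdots]$ fails for the same example $f=Q_{N-1}^m$, since then ${\rm p}_N^m f=f$ and the left side is of order $N$. There is no mechanism here that makes the derivative term genuinely small of order $N^{-1}$; the interpolant of a generic $f$ does not behave like a constant in $\theta$.
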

\begin{proof}  Lemma \ref{lemma:inverseIneq} and the equivalent
norms presented in Theorem \ref{theo:equiv_norms} allow us to conclude, as a byproduct, the
inverse inequality
\begin{equation}\label{eq:wm1:01}
 \|({\rm q}_{N}^mf)'\|_{L^2_{\rm sin}} \le C N\|{\rm
q}_{N}^m f\|_{L^2_{\rm sin}}.
\end{equation}
Hence applying Corollary~\ref{cor:01} will lead to the derivation of the bound~\eqref{eq:01:lemma:02}.
\end{proof}
 

%
%

The analysis of the first term in  \eqref{eq:terms} is rather more delicate. Thus, before entering in the
analysis we need to prove some technical results. 

\begin{lemma}\label{lemma:aux5}
There exists $C>0$ so that for all $r_N\in\mathbb{D}_{N-2}^{\rm e}$ with $N\ge 2$, 
\begin{equation}\label{eq:lem4.8-bd}
 |r_N(0)|^2+|r_N(\pi)|^2 \le C N^2\log N\,{\cal L}_N^1(|r_N|^2\sin(\cdot) ).
\end{equation}
\end{lemma}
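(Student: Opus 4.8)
The plan is to exploit that $r_N\in\mathbb{D}_{N-2}^{\rm e}={\rm span}\langle\cos j\theta : j=0,\ldots,N-2\rangle$ so that $|r_N|^2\sin(\cdot)$ is in a space to which the trapezoidal quadrature $\mathcal{L}_N^1$ applies favourably, while $|r_N|^2$ alone sits in $\mathbb{D}_{2N-4}^{\rm e}$. First I would write $r_N(\theta)=\sum_{j=0}^{N-2}a_j\cos j\theta$ and use the Lagrange-type/discrete representation of point evaluation at $\theta=0$ and $\theta=\pi$ in terms of the grid values $r_N(\theta_k)$, $k=0,\ldots,N$; equivalently, since $r_N=\mathrm{q}_N^{\rm e}r_N$ is its own even trigonometric interpolant, $r_N(0)$ and $r_N(\pi)$ are bounded by the Lebesgue-type sum $\sum_{k=0}^N{}'' |r_N(\theta_k)|\,|\ell_k(0)|$ where $\ell_k$ are the DCT interpolation fundamental functions. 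The kernel $|\ell_k(0)|$ behaves like $\tfrac{1}{N}\cot(\theta_k/2)$ near the north pole, and its $\ell^\infty$-weighted behaviour is what produces the $\log N$: one has $\sum_{k}|\ell_k(0)|\sim \log N$ (this is essentially the $\mathcal{O}(\log N)$ Lebesgue constant at the endpoint for the cosine interpolation, and it is exactly here the logarithm enters).

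Next I would bring in the weight $\sin\theta_k$. The quantity we are allowed to use on the right is $\mathcal{L}_N^1(|r_N|^2\sin(\cdot))=\tfrac{\pi}{N}\sum_{k=0}^N{}''|r_N(\theta_k)|^2\sin\theta_k$; note the endpoint terms $k=0,N$ carry the factor $\sin\theta_k=0$, so they drop out entirely, which is consistent with the statement (the right side does not ``see'' $r_N(0),r_N(\pi)$ directly). The strategy is a weighted Cauchy--Schwarz:
\[
|r_N(0)|\le \sum_{k=1}^{N-1}|\ell_k(0)|\,|r_N(\theta_k)|
= \sum_{k=1}^{N-1}\frac{|\ell_k(0)|}{\sqrt{\sin\theta_k}}\cdot\sqrt{\sin\theta_k}\,|r_N(\theta_k)|
\le\Big(\sum_{k=1}^{N-1}\frac{|\ell_k(0)|^2}{\sin\theta_k}\Big)^{1/2}\Big(\sum_{k=1}^{N-1}\sin\theta_k|r_N(\theta_k)|^2\Big)^{1/2}.
\]
Squaring gives $|r_N(0)|^2\le \big(\sum_{k=1}^{N-1}\tfrac{|\ell_k(0)|^2}{\sin\theta_k}\big)\cdot\tfrac{N}{\pi}\,\mathcal{L}_N^1(|r_N|^2\sin(\cdot))$. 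So the lemma reduces to the purely numerical estimate
\[
\sum_{k=1}^{N-1}\frac{|\ell_k(0)|^2}{\sin\theta_k}\le C N\log N,
\]
and similarly at $\theta=\pi$. Using $|\ell_k(0)|\lesssim \tfrac1N\cot(\theta_k/2)\lesssim \tfrac1N\cdot\tfrac{1}{\theta_k}$ and $\sin\theta_k\gtrsim \theta_k$ for $\theta_k\le\pi/2$ (with a symmetric bound near $\pi$), the sum is $\lesssim \tfrac{1}{N^2}\sum_{k=1}^{N/2}\tfrac{1}{\theta_k^3}$; but $\theta_k=k\pi/N$ so $\tfrac1{\theta_k^3}\sim \tfrac{N^3}{k^3}$, giving $\sum\lesssim N\sum_{k\ge1}k^{-3}=\mathcal{O}(N)$ — which is actually better than needed. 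To get the stated $N^2\log N$ one only needs the cruder bound $|\ell_k(0)|^2/\sin\theta_k\lesssim \tfrac1{N^2}\cdot\tfrac{N^2}{k^2}\cdot\tfrac{N}{k}$ summed, or, more robustly, one argues with $\|r_N\|_\infty$ and an inverse inequality: $|r_N(0)|^2+|r_N(\pi)|^2\le\|r_N\|^2_{L^\infty(0,\pi)}\le C\log N\,\|r_N\|_{H^0_\#}^2 = C\log N\,\mathcal{L}_N^1(|r_N|^2)$ by \eqref{eq:prop01:Ln} (valid since $r_N\in\mathbb{D}_{N-2}^{\rm e}\subset\mathbb{D}_{N-1}^{\rm e}$), and then convert $\mathcal{L}_N^1(|r_N|^2)$ to $\mathcal{L}_N^1(|r_N|^2\sin(\cdot))$ at the cost of $N^2$ via the discrete weighted inequality $\tfrac{\pi}N\sum_k{}''|r_N(\theta_k)|^2\le C N\cdot\tfrac\pi N\sum_k{}''\sin\theta_k|r_N(\theta_k)|^2$, which holds because $\sin\theta_k\ge \sin(\pi/N)\ge C/N$ for the interior nodes and the endpoint nodes may be absorbed (as above) into the same log-bound. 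This second route is cleaner and I would present it.

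The main obstacle is controlling the discrete ratio $\sum_k{}''|r_N(\theta_k)|^2\big/\sum_k{}''\sin\theta_k|r_N(\theta_k)|^2$ uniformly over $\mathbb{D}_{N-2}^{\rm e}$: the naive nodal bound $\sin\theta_k\ge C/N$ for interior $k$ loses a factor $N$, but the endpoint nodes $\theta_0=0,\theta_N=\pi$ have $\sin\theta_k=0$ and cannot be handled nodally at all — they must be handled by the interpolation/Lebesgue-constant argument, which is precisely what injects the $\log N$. Reconciling these two mechanisms (a factor $N$ from the interior weight degeneration near the poles, combined with a $\log N$ from recovering the pole values from the interior data) to land exactly on $N^2\log N$ is the delicate bookkeeping; I expect the cleanest path is the $L^\infty$/inverse-inequality route of the previous paragraph, invoking \eqref{eq:prop01:Ln} and the one-dimensional $\mathcal{O}(\log N)$ endpoint estimate for even trigonometric interpolation, together with Lemma~\ref{lemma:sin_approx}-style control if needed to pass between $\sin\theta$ and its polynomial surrogate.
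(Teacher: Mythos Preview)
Your first route---Lagrange representation plus a weighted Cauchy--Schwarz splitting---is exactly the paper's argument, but two details need correction. First, you cannot use the $\mathrm{q}_N^{\rm e}$ Lagrange basis (the one for $\mathbb{D}_N^{\rm e}$ on all $N+1$ nodes): those functions satisfy $\ell_k(\theta_0)=\delta_{k0}$, so $\ell_k(0)=0$ for $k\ge 1$ and the representation is trivially $r_N(0)=r_N(0)$. The point is rather that $r_N\in\mathbb{D}_{N-2}^{\rm e}$ is already determined by its values at the \emph{interior} nodes $\theta_1,\ldots,\theta_{N-1}$, and the paper writes down the explicit Lagrange basis for that $(N-1)$-dimensional problem,
\[
L_j^N(\theta)=(-1)^{j+1}\sin^2\theta_j\,\frac{\sin N\theta}{N\sin\theta(\cos\theta-\cos\theta_j)}\in\mathbb{D}_{N-2}^{\rm e},
\]
and computes $L_j^N(0)=(-1)^{j+1}2\cos^2(\theta_j/2)$. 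So your kernel estimate $|\ell_k(0)|\lesssim\frac1N\cot(\theta_k/2)$ is wrong: the correct Lagrange values are uniformly bounded, $|L_j^N(0)|\le 2$, not decaying like $1/k$. With this, your weighted Cauchy--Schwarz gives precisely
\[
|r_N(0)|^2\le\Big(\sum_{j=1}^{N-1}\frac{|L_j^N(0)|^2}{\sin\theta_j}\Big)\cdot\frac{N}{\pi}\,\mathcal{L}_N^1(|r_N|^2\sin(\cdot))
\le 4\Big(\sum_{j=1}^{N-1}\frac{1}{\sin\theta_j}\Big)\cdot\frac{N}{\pi}\,\mathcal{L}_N^1(|r_N|^2\sin(\cdot)),
\]
and $\sum_{j=1}^{N-1}\sin^{-1}\theta_j\lesssim N\log N$ is where the logarithm actually enters. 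This is the paper's proof.

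Your second route, the one you call cleaner and would present, has a genuine gap. The claimed inequality $\|r_N\|_{L^\infty}^2\le C\log N\,\|r_N\|_{H^0_\#}^2$ is false: the sharp Nikolskii inequality for trigonometric polynomials of degree $\le N$ is $\|r_N\|_\infty\le C\sqrt{N}\,\|r_N\|_{H^0_\#}$, attained by the Dirichlet kernel, so the factor is $N$, not $\log N$. The $\mathcal O(\log N)$ you have in mind is the Lebesgue constant of equispaced interpolation, an $L^\infty\!\to\! L^\infty$ bound, not an $L^2\!\to\! L^\infty$ bound. Once you only have a factor $N$ there, the subsequent absorption of the endpoint terms $\frac{\pi}{2N}(|r_N(0)|^2+|r_N(\pi)|^2)$ coming from $\mathcal{L}_N^1(|r_N|^2)$ is circular: those terms reappear on the right with a coefficient of order one and cannot be moved to the left. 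So the second route does not close, and you should stick with the (corrected) first one.
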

\begin{proof}
For each $j=1,\ldots,N-1$ define
\[
 L_j^N:=(-1)^{j+1}\sin^2\theta_j \frac{\sin N \theta}{N \sin\theta (\cos\theta-\cos\theta_j) }\in\mathbb{D}_{N-2}^{\rm e}.
\]
It is a simple exercise to verify that
\[
 L_j^N(\theta_i)=\left\{\begin{array}{ll}
                    1,\quad&i=j,\\
                    0,\quad&\text{otherwise}. 
                   \end{array}\right.
\]
Thus, $\{L_j^N\}$ is the Lagrange basis for the interpolation problem on $\mathbb{D}_{N-2}^{\rm e}$
with grid points $\{\theta_j\}_{j=1}^{N-1}$. \modificationG{Consequently,}
\[
 r_N=\sum_{j=1}^{N-1}r_N(\theta_j)L_j^N. 
\]
Since
\[
 L_j^N(0)= (-1)^{j+1}2\cos^2(\theta_j/2),
\]
we obtain
\begin{eqnarray}
 | r_N(0)|^2&\le&\bigg[\sum_{j=1}^{N-1}  2|r_N(\theta_j)|\bigg]^2\le \frac{2 N}{\pi}
 \bigg[\sum_{j=1}^{N-1}\sin^{-1}\theta_j\bigg]\bigg[\frac{\pi}{N}\sum_{j=1}^{N-1} |r_N(\theta_j)|^2\sin\theta_j\bigg]\\
 &\le&\frac{4 N^2}{\pi}\bigg[\sum_{1\le j\le N/2} \frac{1}{j}\bigg]{\cal
L}_N^1(|r_N|^2\sin(\cdot)), 
\end{eqnarray}
where we have used the inequality
\[
  \sin\theta_{N-j}= \sin\theta_j\ge \frac{2\theta_j}{\pi}=\frac{2j}{N},\quad \forall  j=0,\ldots, \lfloor N/2\rfloor. 
\]
Hence, for $N\ge 2$, the desired result~\eqref{eq:lem4.8-bd} for $r_N(0)$  follows from the inequality
\[
 \sum_{1\le j\le N/2} \frac{1}{j}\le 2\log N.  
\]
The bound for  $r_N(\pi)$ in~\eqref{eq:lem4.8-bd}  can be established  analogously. 
\end{proof}

Next we establish bounds for ${\rm q}_N^m$, by investigating
separately the cases for even $m$ (i.e., operator ${\rm q}_N^{\rm e}$) and for odd $m$
(i.e., operator ${\rm q}_N^{\rm o}$). 

\begin{proposition}
\label{prop:02b} Suppose that Hypothesis \ref{hypo:01} holds.  There exists $C>0$ such that for any integer $m\ne 0$ and $f\in
W_{2m}^2$,  
\begin{equation}\label{eq:prop4.9-bd}
|2m|\:\|{\rm q}_{N}^{\rm e}f\|_{L^2_{\sin^{-1}}}\le
C\big[\|f\|_{W_{2m}^1}+N^{-1}\|f\|_{W_{2m}^2}\big].
\end{equation}
\end{proposition}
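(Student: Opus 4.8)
The plan is to reduce the even‑interpolant estimate to the odd‑interpolant estimate from Proposition~\ref{prop:01} by dividing through by $\sin\theta$. Since $f\in W_{2m}^1$ with $2m\ne0$, the function $f$ is continuous with $f(0)=f(\pi)=0$ (see the discussion after Theorem~\ref{theo:equiv_norms}), so ${\rm q}_N^{\rm e}f\in\mathbb{D}_N^{\rm e}$ vanishes at $\theta=0,\pi$ and hence ${\rm q}_N^{\rm e}f=\sin^2\theta\,r_{N-2}$ with $r_{N-2}\in\mathbb{D}_{N-2}^{\rm e}$ (by the identity stated below \eqref{eq:chin2}). Consequently $w:={\rm q}_N^{\rm e}f/\sin\theta=\sin\theta\,r_{N-2}\in\mathbb{D}_{N-2}^{\rm o}$ satisfies $w(\theta_j)=f(\theta_j)/\sin\theta_j$ for $j=1,\dots,N-1$, so by uniqueness of the odd interpolant $w={\rm q}_N^{\rm o}(f/\sin)$, and therefore
\[
 \|{\rm q}_N^{\rm e}f\|_{L^2_{\sin^{-1}}}=\|w\|_{L^2_{\sin}}=\|{\rm q}_N^{\rm o}(f/\sin)\|_{L^2_{\sin}}=\|{\rm q}_N^{\rm o}(f/\sin)\|_{W_{\widetilde m}^0},
\]
where $\widetilde m:=2m-\mathrm{sign}(m)$ is the adjacent odd mode ($|\widetilde m|=|2m|-1$).

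Next I would record the regularity transfer $\varphi:=f/\sin\in W_{\widetilde m}^1$ with quantitative bounds. Writing $(f/\sin)'=f'/\sin-f\cos\theta/\sin^2$ and using Theorem~\ref{theo:equiv_norms} (the equivalence $W_{2m}^2\cong Z_{2m}^2$, valid since $|2m|\ge2$, together with $W_{\widetilde m}^1\cong Z_{\widetilde m}^1$), one gets
\[
 |2m|\,\|\varphi\|_{W_{\widetilde m}^0}=|2m|\,\|f\|_{L^2_{\sin^{-1}}}\le\|f\|_{Z_{2m}^1}\le\|f\|_{W_{2m}^1},
\qquad
 |2m|\,\|\varphi\|_{W_{\widetilde m}^1}\le C\|f\|_{W_{2m}^2},
\]
the second bound because $\widetilde m^2\!\int|\varphi|^2/\sin+\int|\varphi'|^2\sin\lesssim (2m)^2\!\int|f|^2/\sin^3+\int|f'|^2/\sin$, which after multiplication by $(2m)^2$ is dominated by $\|f\|_{Z_{2m}^2}^2$ (here $(2m)^2\le(2m)^4$ is used). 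The case $|m|=1$ (where the $Z$–characterisation of $W_1^2$ fails) is handled separately, but there $|2m|=2$ is bounded so crude estimates suffice. Similarly $|2m|\,\|\varphi'\|_{L^2_{\sin}}\lesssim\|f\|_{W_{2m}^2}$.

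Then I would apply Proposition~\ref{prop:01} to ${\rm q}_N^{\rm o}\varphi$ (with the index "$m$'' there taken to be $\widetilde m\ne0$, noting ${\rm q}_N^{\widetilde m}={\rm q}_N^{\rm o}$) and multiply by $|2m|$:
\[
 |2m|\,\|{\rm q}_N^{\rm e}f\|_{L^2_{\sin^{-1}}}\le C\big[\,|2m|\,\|\varphi\|_{L^2_{\sin}}+N^{-1}|2m|\,\|\varphi'\|_{L^2_{\sin}}+|2m|N^{-1/2}\|{\rm q}_N^{\rm o}\varphi\|_{H^0_\#}\,\big].
\]
The first two terms are $\lesssim\|f\|_{W_{2m}^1}+N^{-1}\|f\|_{W_{2m}^2}$ by the bounds above. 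For the third term I would split $\|{\rm q}_N^{\rm o}\varphi\|_{H^0_\#}\le\|{\rm q}_N^{\rm o}\varphi-\varphi\|_{H^0_\#}+\|\varphi\|_{H^0_\#}$; the term $\|\varphi\|_{H^0_\#}$ is controlled by Lemma~\ref{lemma:ineqY1} ($\|\varphi\|_{H^0_\#}\le C[\|\varphi\|_{L^2_{\sin}}+\|\varphi\|_{L^2_{\sin}}^{1/2}\|\varphi'\|_{L^2_{\sin}}^{1/2}]$), and after multiplying by $|2m|N^{-1/2}$ and invoking Young's inequality $a^{1/2}b^{1/2}\le\tfrac12(a+b)$ one obtains precisely $\lesssim\|f\|_{W_{2m}^1}+N^{-1}\|f\|_{W_{2m}^2}$. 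The remaining interpolation‑error term is the main obstacle: it must be estimated through \eqref{eq:qnmHs} together with Proposition~\ref{prop:inclusion_in_Hr}, which only gives $\|{\rm q}_N^{\rm o}\varphi-\varphi\|_{H^0_\#}\le CN^{-t}\|\varphi\|_{W_{\widetilde m}^{t+1/2}}$ for $t>1/2$, and the available regularity of $\varphi=f/\sin$ is only borderline ($\varphi\in W_{\widetilde m}^{1}$, with the $\sin^{-3}$–weighted norm needed for $W_{\widetilde m}^2$ genuinely unavailable from $f\in W_{2m}^2$). Resolving this is where I expect Lemma~\ref{lemma:aux5} to enter: using ${\rm q}_N^{\rm o}\varphi=\sin\theta\,r_{N-2}$ and the exact quadrature $\mathcal{L}_N^1$ on $|r_{N-2}|^2\in\mathbb{D}_{2N-1}^{\rm e}$ to express $\|r_{N-2}\|_{H^0_\#}^2$ in terms of $|r_{N-2}(\theta_j)|^2$ plus the endpoint values $|r_{N-2}(0)|^2+|r_{N-2}(\pi)|^2$, and bounding the latter by the $N^2\log N$–weighted sum of Lemma~\ref{lemma:aux5}; one then re‑expresses everything in the grid data $f(\theta_j)$ via $r_{N-2}(\theta_j)=f(\theta_j)/\sin^2\theta_j$, applies the rectangular‑rule error \eqref{eq:QuadError} to pass back to the weighted integrals $\int|f|^2\sin^{-3}$, $\int|f'|^2\sin^{-1}$, and closes the estimate using $W_{2m}^2\cong Z_{2m}^2$. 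Collecting all contributions yields \eqref{eq:prop4.9-bd}.
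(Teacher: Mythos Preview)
Your reduction $\|{\rm q}_N^{\rm e}f\|_{L^2_{\sin^{-1}}}=\|{\rm q}_N^{\rm o}(f/\sin)\|_{L^2_{\sin}}$ is correct and is a legitimate alternative starting point, different from the paper's direct approach. But your proposed resolution of the ``main obstacle'' contains a genuine gap, and in fact would \emph{fail} to give the stated bound.

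The point you are missing is that Lemma~\ref{lemma:aux5} is \emph{not} needed here, and invoking it would introduce a $\log N$ factor that is absent from \eqref{eq:prop4.9-bd}. (Compare with Proposition~\ref{prop:02c}, which \emph{does} carry the $\sqrt{\log N}$.) You also drift to the wrong quantity: what must be controlled is $\|{\rm q}_N^{\rm o}\varphi\|_{H^0_\#}$, not $\|r_{N-2}\|_{H^0_\#}$. The key observation is the one you already made but then abandoned: since ${\rm q}_N^{\rm e}f=\sin^2\theta\,r_{N-2}$, one has ${\rm q}_N^{\rm o}\varphi=\sin\theta\,r_{N-2}\in\mathbb{D}_{N-2}^{\rm o}$, which \emph{vanishes at $0$ and $\pi$}. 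Hence $|{\rm q}_N^{\rm o}\varphi|^2\in\mathbb{D}_{2N-4}^{\rm e}$ and the exact quadrature \eqref{eq:prop01:Ln} gives
\[
\|{\rm q}_N^{\rm o}\varphi\|_{H^0_\#}^2=\mathcal{L}_N^1(|{\rm q}_N^{\rm o}\varphi|^2)=\frac{\pi}{N}\sum_{j=1}^{N-1}\frac{|f(\theta_j)|^2}{\sin^2\theta_j},
\]
with \emph{no endpoint contribution}. This sum is then bounded via \eqref{eq:QuadError} by $\|f\|^2_{L^2_{\sin^{-2}}}+CN^{-1}(\|f\|^2_{L^2_{\sin^{-3}}}+\|f'\|^2_{L^2_{\sin^{-1}}})$, and after multiplication by $(2m)^2N^{-1}$ the $Z_{2m}^1$/$Z_{2m}^2$ equivalences close the estimate without any logarithm. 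So your route via Proposition~\ref{prop:01} can be completed, but only by dropping the detour through $\|{\rm q}_N^{\rm o}\varphi-\varphi\|_{H^0_\#}$ and Lemma~\ref{lemma:aux5}.

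The paper's proof is more direct: it never passes through Proposition~\ref{prop:01}. It expands $\|{\rm q}_N^{\rm e}f\|^2_{L^2_{\sin^{-1}}}$ using $\widetilde{s}_{2N},s_{2N}$ and \eqref{eq:L2n}, applies Hypothesis~\ref{hypo:01} with $j=2$ (to ${\rm q}_N^{\rm e}f/\sin\in\mathbb{D}_{N-2}^{\rm o}$), and arrives at the analogue of your quadrature sum directly. The decisive structural point is the same one you needed: $({\rm q}_N^{\rm e}f/\sin)(0)=({\rm q}_N^{\rm e}f/\sin)(\pi)=0$, so the endpoint terms vanish and no $\log N$ enters. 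This is exactly what distinguishes the even case from the odd one in Proposition~\ref{prop:02c}, where ${\rm q}_N^{\rm o}f/\sin\in\mathbb{D}_{N-2}^{\rm e}$ need not vanish at $0,\pi$, forcing Lemma~\ref{lemma:aux5} and the $\sqrt{\log N}$.
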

\begin{proof}  We assume throughout this proof that $f$ is a real valued function. 
We consider the function  $g_{2N}(\theta):=|({\rm q}_{N}^{\rm e}
f(\theta))|^2/\sin^2(\theta)s_{2N}(\theta)\in\mathbb{D}_{4N-4}^{\rm e}$. Using the definition of
$\tilde{s}_{2N}$ in \eqref{eq:def:sn:sntilde} and Lemma  \ref{lemma:sin_approx}, 
\begin{eqnarray}
\|{\rm q}_{N}^{\rm e}f\|^2_{L^2_{\sin^{-1}}}&=&
\int_0^\pi  \Big|\frac{{\rm q}_{N}^{\rm e}f}{\sin\theta}\Big|^2
\widetilde{s}_{2N}(\theta)\,{\rm d}\theta
=\int_0^\pi g_{2N}(\theta)\,{\rm d}\theta+ \int_0^\pi  \Big|\frac{{\rm
q}_{N}^{\rm e}f}{\sin\theta}\Big|^2
(\widetilde{s}_{2N}(\theta)-{s}_{2N}(\theta))\,{\rm d}\theta\nonumber\\
&\le&{\cal L}_{2N}^1 g_{2N}+\frac{1}{\pi N}\|{\rm q}_{N}^{\rm
e}f\|^2_{L^2_{\sin^{-2}}}\nonumber\\
&=&{\cal L}_{N}^1 g_{2N}+{\textstyle\frac12}({\cal L}_{N}^2 g_{2N}-{\cal
L}_{N}^1 g_{2N})+\frac{1}{\pi N}\|{\rm q}_{N}^{\rm
e}f\|^2_{L^2_{\sin^{-2}}}.\label{eq:04:prop:02b}
\end{eqnarray}
Proceeding similarly as in \eqref{eq:LN2-Ln1},
using  Hypothesis \ref{hypo:01} (for ${\rm q}_{N}^{\rm e}
f(\theta) /{\sin \theta }\in\mathbb{D}_{N-2}^{\rm o}$) and again Lemma \ref{lemma:sin_approx}, 
we obtain
\begin{equation} \label{eq:05:prop:02b}
\begin{split}
{\textstyle\frac12}({\cal L}_{N}^2 g_{2N} & 
-{\cal L}_{N}^1 g_{2N}) = -2\int_0^\pi g_N(\theta)\cos(2N\theta)\,{\rm
d}\theta\\
&=\ -2\int_0^\pi  \bigg|\frac{{\rm q}_{N}^{\rm e}
f(\theta)}{\sin \theta }\bigg|^2 \sin\theta\cos 2N\theta\, {\rm d}\theta -2\int_0^\pi 
  \bigg|\frac{{\rm q}_{N}^{\rm e}
f(\theta)}{\sin \theta }\bigg|^2 (\widetilde{s}_{2N}(\theta)-\sin\theta) \cos(2N\theta)\,{\rm
d}\theta\\
&\le\  
c_{\rm H}^{(2)}\int_0^\pi  \bigg|\frac{{\rm q}_{N}^{\rm e}
f(\theta)}{\sin \theta }\bigg|^2 \sin\theta\,{\rm d}\theta + 
\frac{2}{\pi N}\|{\rm q}_{N}^{\rm e}f\|^2_{L^2_{\sin^{-2}}}
\end{split}
\end{equation}
Using \eqref{eq:05:prop:02b} in \eqref{eq:04:prop:02b} and  the identity
\[
 {\cal L}_{N}^1\big(|{\rm q}_{N}^{\rm e}f/\sin(\cdot)
\big|^2\big)=\|{\rm q}_{N}^{\rm e}f\|^2_{L^2_{\sin^{-2}}},
\]
we easily derive the bound
\begin{equation}\label{eq:03:ConvW1}
 \|{\rm q}_{N}^{\rm e}f\|^2_{L^2_{\sin^{-1}}}\le \frac{1}{1-c_{\rm H}^{(2)}}\Big[
 {\cal L}_{N}^1 g_{2N} 
+\frac{3}{\pi N}{\cal L}_{N}^1\big(|{\rm q}_{N}^{\rm e}f/\sin(\cdot)
\big|^2\big)
 \Big].
\end{equation}
The first term in the above bound can be estimated as follows.
Using the definition of ${s}_{2N}$, \eqref{eq:QuadError} and  the 
Cauchy-Schwarz inequality (combined with 
the inequality $2ab\le N a^2+N^{-1}b^2$), we obtain
\begin{eqnarray}
{\cal L}_{N}^1 g_{2N}&=&\frac{\pi}{N}\sum_{j=1}^{N-1}
\frac{|{\rm q}_{N}^{\rm e}
f(\theta_j)|^2}{\sin^2 \theta_j} s_{2N}(\theta_j)=
\frac{\pi}{N}\sum_{j=1}^{N-1}
\frac{|{\rm q}_{N}^{\rm e}
f(\theta_j)|^2}{\sin \theta_j}=\frac{\pi}{N}\sum_{j=1}^{N-1}
\frac{|f(\theta_j)|^2}{\sin \theta_j}\nonumber\\
&\le &\int_0^\pi |f(\theta)|^2\frac{\rm d\theta}{\sin\theta}
+\frac{\pi}{N}\bigg[\int_0^\pi |f(\theta)|^2\frac{\rm d\theta}{\sin^2\theta}
+\int_0^\pi 2|f(\theta)f'(\theta)|\frac{\rm
d\theta}{\sin\theta}\bigg]\nonumber\\
&\le& \Big(1+\frac{3\pi}2\Big)\|f\|^2_{L^2_{\sin^{-1}}} + \frac{\pi}{2N^2} \big(
\|f\|^2_{L^2_{\sin^{-3}}}+2\|f'\|^2_{L^2_{\sin^{-1}}}\big).   \label{eq:04:ConvW1}
\end{eqnarray}

\modification{On the other hand since $({\rm q}_{N}^{\rm e}f)(0)=f(0)=0=f(\pi)=({\rm
q}_{N}^{\rm e}f)(\pi)$ and using the fact that ${\rm q}_{N}^{\rm e}f\in\mathbb{D}_{N}^{\rm
e}$, we obtain
\[
({\rm q}_{N}^{\rm e}f/\sin)(0)=({\rm q}_{N}^{\rm e}f/\sin)(\pi)=0 .
\]
Hence
}
\begin{eqnarray}
{\cal L}_{N}^1\Big({\rm q}_{N}^{\rm e}f/\sin(\cdot) \Big)^2 &=&
\frac{\pi}{N}\sum_{j=1}^{N-1} \bigg|\frac{{\rm q}_{N}^{\rm e}f(\theta_j)}{\sin
\theta_j}\bigg|^2
=\frac{\pi}{N}\sum_{j=1}^{N-1} \bigg|\frac{f(\theta_j)}{\sin \theta_j}\bigg|^2\nonumber\\
&\le&\int_0^\pi \frac{|f(\theta)|^2}{\sin^2\theta} {\rm d\theta}
+\frac{\pi}{N}\bigg[2\int_0^\pi |f(\theta)|^2\frac{\rm d\theta}{\sin^3\theta}
+\int_0^\pi 2f(\theta)f'(\theta)\frac{\rm
d\theta}{\sin^2\theta}\bigg]\nonumber\\
&\le &  N\Big[\big(\pi+\tfrac12\big)\|f\|^2_{L^2_{\sin^{-1}}} + N^{-2}\big\{(3\pi+\tfrac12)
\|f\|^2_{L^2_{\sin^{-3}}}+\pi \|f'\|^2_{L^2_{\sin^{-1}}}\big\}\Big] 
\nonumber. 
\end{eqnarray}
In other words, 
\begin{equation}\label{eq:05:ConvW1}
N^{-1}{\cal L}_{N}^1\big(|{\rm q}_{N}^{\rm e}f/\sin(\cdot) \big|^2\Big)\le 
C\Big[ \|f\|^2_{L^2_{\sin^{-1}}}+N^{-2} \Big(\|f'\|^2_{L^2_{\sin^{-1}}} + \|f\|^2_{L^2_{\sin^{-3}}}\Big)\Big].
\end{equation}
Plugging \eqref{eq:04:ConvW1} and \eqref{eq:05:ConvW1} in \eqref{eq:03:ConvW1}, and taking into account the definitions of the equivalent norms $\|\cdot\|_{Z_m^1}$ and $\|\cdot\|_{Z_m^2}$ (see Theorem \ref{theo:equiv_norms}), we obtain the desired result~\eqref{eq:prop4.9-bd}.
\end{proof}

For the next result, we recall that the equivalence norms relation in~\eqref{eq:02:cor:equivNorms}  is valid only for $|m| \geq 2$. 

\begin{proposition}
\label{prop:02c} 
Suppose that Hypothesis \ref{hypo:01} holds. 
There exists $C>0$ such that for any $f\in
W_{2m+1}^2$ with $m\ne -1,0$, 
\begin{equation}\label{eq:prop4.10-bd}
|2m+1|\:\|{\rm q}_{N}^{\rm o}f\|_{L^2_{\sin^{-1}}}\le
(C+C'\sqrt{\log N})\big[\|f\|_{W_{2m+1}^1}+N^{-1}\|f\|_{W_{2m+1}^2}\big].
\end{equation}
\end{proposition}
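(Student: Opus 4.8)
The plan is to follow the proof of Proposition~\ref{prop:02b} almost verbatim, while carefully isolating the one place where the odd interpolant behaves differently from the even one --- which is exactly where the logarithmic factor is created. Assume $f$ is real-valued (the complex case follows by splitting into real and imaginary parts, as in Proposition~\ref{prop:02b}). Write $r_N:={\rm q}_N^{\rm o}f/\sin(\cdot)$. Since ${\rm q}_N^{\rm o}f\in\mathbb{D}_{N-2}^{\rm o}=\sin(\cdot)\,\mathbb{D}_{N-2}^{\rm e}$, we have $r_N\in\mathbb{D}_{N-2}^{\rm e}=A_N^1$ and $|r_N|^2\in\mathbb{D}_{2N-4}^{\rm e}$. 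Introduce $g_{2N}:=|r_N|^2\,s_{2N}\in\mathbb{D}_{4N-4}^{\rm e}$, with $s_{2N}\in\mathbb{D}_{2N}^{\rm e}$ the interpolant of $\sin$ from \eqref{eq:def:sn:sntilde}. Using the defining $L^2$-orthogonality of $\widetilde{s}_{2N}$ (applicable since $|r_N|^2\in\mathbb{D}_{2N}^{\rm e}$), Lemma~\ref{lemma:sin_approx}, the exactness ${\cal L}^1_{2N}g_{2N}=\int_0^\pi g_{2N}$ (because $g_{2N}\in\mathbb{D}_{4N-1}^{\rm e}$), and the splitting \eqref{eq:L2n}, one arrives, exactly as at \eqref{eq:04:prop:02b}, at
\[
 \|{\rm q}_N^{\rm o}f\|^2_{L^2_{\sin^{-1}}}\le{\cal L}^1_Ng_{2N}+{\textstyle\frac12}\bigl({\cal L}^2_Ng_{2N}-{\cal L}^1_Ng_{2N}\bigr)+\frac{C}{N}\|{\rm q}_N^{\rm o}f\|^2_{L^2_{\sin^{-2}}}.
\]
Since $g_{2N}\in\mathbb{D}_{4N-4}^{\rm e}$, identity \eqref{eq:prop02:Ln} retains only its cosine coefficient of order $2N$; after replacing $s_{2N}$ by $\sin(\cdot)$ via Lemma~\ref{lemma:sin_approx} and applying Hypothesis~\ref{hypo:01} with $A_N^1=\mathbb{D}_{N-2}^{\rm e}$ to $r_N$, the second term is bounded by $c_{\rm H}^{(1)}\|{\rm q}_N^{\rm o}f\|^2_{L^2_{\sin^{-1}}}+\frac{C}{N}\|{\rm q}_N^{\rm o}f\|^2_{L^2_{\sin^{-2}}}$, and the first summand here is absorbed into the left-hand side because $c_{\rm H}^{(1)}<1$.

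The step with no even-case analogue --- and the one I expect to be the crux --- is the estimate of $\|{\rm q}_N^{\rm o}f\|^2_{L^2_{\sin^{-2}}}$. Since $|r_N|^2\in\mathbb{D}_{2N-4}^{\rm e}$ is integrated exactly by ${\cal L}^1_N$,
\[
 \|{\rm q}_N^{\rm o}f\|^2_{L^2_{\sin^{-2}}}={\cal L}^1_N(|r_N|^2)=\frac{\pi}{N}\sum_{j=1}^{N-1}\frac{|f(\theta_j)|^2}{\sin^2\theta_j}+\frac{\pi}{2N}\bigl(|r_N(0)|^2+|r_N(\pi)|^2\bigr),
\]
and the endpoint contribution appears precisely because $r_N\in\mathbb{D}_{N-2}^{\rm e}$ need not vanish at $0,\pi$ --- in contrast to ${\rm q}_N^{\rm e}f/\sin(\cdot)\in\mathbb{D}_{N-2}^{\rm o}$, which does vanish there. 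This is exactly where Lemma~\ref{lemma:aux5} enters: applied to $r_N$ it gives $|r_N(0)|^2+|r_N(\pi)|^2\le CN^2\log N\,{\cal L}^1_N(|r_N|^2\sin(\cdot))$, while a direct evaluation of the quadrature sums shows ${\cal L}^1_N(|r_N|^2\sin(\cdot))=\frac{\pi}{N}\sum_{j=1}^{N-1}|f(\theta_j)|^2/\sin\theta_j={\cal L}^1_Ng_{2N}$ (the endpoint weights multiply $\sin 0=\sin\pi=0$, and $g_{2N}(0)=g_{2N}(\pi)=0$ as well, since $s_{2N}$ vanishes there; at interior nodes $s_{2N}(\theta_j)=\sin\theta_j$ and ${\rm q}_N^{\rm o}f(\theta_j)=f(\theta_j)$). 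Hence $\frac{C}{N}\|{\rm q}_N^{\rm o}f\|^2_{L^2_{\sin^{-2}}}\le\frac{C\pi}{N^2}\sum_{j=1}^{N-1}|f(\theta_j)|^2/\sin^2\theta_j+C'\log N\,{\cal L}^1_Ng_{2N}$, so the logarithmic factor is attached exactly to the leading term ${\cal L}^1_Ng_{2N}$.

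It then remains to estimate ${\cal L}^1_Ng_{2N}=\frac{\pi}{N}\sum_{j=1}^{N-1}|f(\theta_j)|^2/\sin\theta_j$ and $\frac{1}{N^2}\sum_{j=1}^{N-1}|f(\theta_j)|^2/\sin^2\theta_j$ exactly as in \eqref{eq:04:ConvW1}--\eqref{eq:05:ConvW1}: for $f\in W_{2m+1}^2$ the functions $|f|^2/\sin(\cdot)$ and $|f|^2/\sin^2(\cdot)$ extend continuously to $[0,\pi]$ with $L^1$ derivatives controlled, via the Cauchy--Schwarz inequality, by $\|f\|_{L^2_{\sin^{-1}}}$, $\|f\|_{L^2_{\sin^{-3}}}$ and $\|f'\|_{L^2_{\sin^{-1}}}$, so the rectangular-rule error \eqref{eq:QuadError} yields ${\cal L}^1_Ng_{2N}\le C\|f\|^2_{L^2_{\sin^{-1}}}+CN^{-2}\bigl(\|f'\|^2_{L^2_{\sin^{-1}}}+\|f\|^2_{L^2_{\sin^{-3}}}\bigr)$ and $N^{-2}\sum_j|f(\theta_j)|^2/\sin^2\theta_j\le C\|f\|_{L^2_{\sin^{-1}}}\|f\|_{L^2_{\sin^{-3}}}+(\text{lower order in }N)$. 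Collecting these bounds, dividing by $1-c_{\rm H}^{(1)}>0$, multiplying through by $(2m+1)^2$, and using $|2m+1|\ge 3$ together with the equivalences between $\|\cdot\|_{Z^1_{2m+1}}$ and $\|\cdot\|_{W^1_{2m+1}}$, and between $\|\cdot\|_{Z^2_{2m+1}}$ and $\|\cdot\|_{W^2_{2m+1}}$, established in Theorem~\ref{theo:equiv_norms} (the restriction $m\ne-1,0$ guaranteeing $|2m+1|\ge 2$, which the $Z^2$ equivalence requires), converts every term into a multiple of $\|f\|^2_{W^1_{2m+1}}$ or $N^{-2}\|f\|^2_{W^2_{2m+1}}$, with an extra factor $1+\log N$ on the $\|f\|^2_{W^1_{2m+1}}$ contribution. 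Taking square roots gives the asserted estimate with the constant $C+C'\sqrt{\log N}$. The only genuinely new ingredient beyond Proposition~\ref{prop:02b} is the appeal to Lemma~\ref{lemma:aux5} for the boundary values of $r_N$, and this is exactly why the odd-frequency bound carries the logarithmic loss absent from the even case.
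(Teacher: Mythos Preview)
Your proof is correct and follows essentially the same approach as the paper's own proof: the argument of Proposition~\ref{prop:02b} is repeated with $r_N={\rm q}_N^{\rm o}f/\sin(\cdot)\in\mathbb{D}_{N-2}^{\rm e}$, Hypothesis~\ref{hypo:01} is invoked with $A_N^1$, and the only new feature---the nonvanishing of $r_N$ at the endpoints---is handled by Lemma~\ref{lemma:aux5}, which injects the $\log N$ factor into the estimate of ${\cal L}_N^1 g_{2N}$. Your identification ${\cal L}_N^1(|r_N|^2\sin(\cdot))={\cal L}_N^1 g_{2N}$ and the final passage to the $W^1_{2m+1}$ and $W^2_{2m+1}$ norms via Theorem~\ref{theo:equiv_norms} (using $|2m+1|\ge 3$) match the paper exactly.
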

\begin{proof}
Following the same steps as in the proof of Proposition \ref{prop:02b},
 using Hypothesis 1 again with $j=1$ (because ${\rm q}_{N}^{\rm o}f(\theta)/\sin(\theta)\in\mathbb{D}_{N-2}^{\rm e}$),  we obtain  
\begin{equation}
 \|{\rm q}_{N}^{\rm o}f\|^2_{L^2_{\sin^{-1}}}\le \frac{1}{1-c_{\rm H}^{(1)}} \Big[
{\cal L}_{N}^1 g_{2N} 
+\frac{3}{\pi N}{\cal L}_{N}^1\Big({\rm q}_{N}^{\rm o}f/\sin(\cdot) \Big)^2 \
\Big].\label{eq:03:ConvW1b}
\end{equation}
 First term can be treated as in \eqref{eq:04:ConvW1} to get
 \begin{eqnarray}
{\cal L}_{N}^1 g_{2N}  
&\le& C\Big[\|f\|^2_{L^2_{\sin^{-1}}} + N^{-2}\big(
\|f\|^2_{L^2_{\sin^{-3}}}+\|f'\|^2_{L^2_{\sin^{-1}}}\big)\Big].
\label{eq:quad:ruleb}
 \end{eqnarray}
 The main difference compared with  the  even $2m$ case dealt in the previous Proposition 
 arises in the second term, since we now have 
 \begin{eqnarray*}
 {\cal L}_{N}^1\Big({\rm q}_{N}^{\rm o}f/\sin(\cdot) \Big)^2&=&\frac{\pi}{2N} 
 \bigg[\Big|\Big(\frac{{\rm q}_{N}^{\rm o}f}{\sin}\Big)(0)\Big|^2+
 \Big|\Big(\frac{{\rm q}_{N}^{\rm o}f}{\sin}\Big)(\pi)\Big|^2\bigg]
 +\frac{\pi}{N}\sum_{j=1}^{N-1}
\frac{|f(\theta_j)|^2}{\sin^2 \theta_j}\\
 &=:&S_1+S_2. 
 \end{eqnarray*}
The first term $S_1$ did not appear in the proof of Proposition \ref{prop:02b}, 
since ${\rm q}_{N}^{\rm o}f\in\mathbb{D}_{N-2}^{\rm o}$, or, equivalently, 
${\rm q}_{N}^{\rm o}f/\sin\in \mathbb{D}_{N-2}^{\rm e}$. Thus, we can expect
  $({\rm q}_{N}^{\rm o}f/\sin\big)(0), ({\rm q}_{N}^{\rm o}f/\sin\big)(\pi)\ne 0$.

Clearly, the second term can be bounded as in \eqref{eq:05:ConvW1}:
 \begin{equation}\label{eq:05:ConvW1b}
N^{-1} S_2\le C\Big[ \|f\|^2_{L^2_{\sin^{-1}}}+N^{-2} \left(\|f'\|^2_{L^2_{\sin^{-1}}} +\|f\|^2_{L^2_{\sin^{-3}}}\right)\Big]. 
\end{equation}
For $S_1$ we apply Lemma \ref{lemma:aux5} and we follow  arguments similar to the
derivation of  \eqref{eq:04:ConvW1} to obtain 
\begin{eqnarray}
 N^{-1}S_1&\le& C \log N\: {\cal L}_N^1 \big(|{\rm q}_{N}^{\rm
o}f|^2/\sin(\cdot)\big)=C\log N\bigg[
 \frac{\pi}{N}\sum_{j=1}^{N-1} 
 \frac{|f(\theta_j)|^2}{\sin \theta_j}\bigg]\nonumber\\
 &\le& C'\log N\Big[\|f\|^2_{L^2_{\sin^{-1}}}+N^{-2}\|f'\|^2_{L^2_{{\sin^{-1}}}}+ N^{-2}\|f\|^2_{L^2_{{\sin^{-3}}
 }}\Big].\label{eq:05:ConvW1c}
\end{eqnarray}
Thus we have proved  the inequality
\[
  \|{\rm q}_{N}^{\rm o}f\|^2_{L^2_{\sin^{-1}}}\le 
 ( C_1+C_2\log N)\Big[ \|f\|^2_{L^2_{\sin^{-1}}}+N^{-2} \|f'\|^2_{L^2_{\sin^{-1}}} +N^{-2}\|f\|^2_{L^2_{\sin^{-3}}}\Big]. 
\]
The desired result~\eqref{eq:prop4.10-bd} now follows from  Theorem~\ref{theo:equiv_norms}.
\end{proof}

Now we are ready to establish convergence estimates for $\mathrm{q}_{N}^m$  similar to 
\eqref{eq:qnmHs} in $\|\cdot\|_{W_m^s}$ norms.

\begin{theorem}\label{theo:qNmf_1} 
Suppose that Hypothesis \ref{hypo:01} holds. There exists $C>0$ so that for any $f\in W_{m}^{2}$,  
\begin{subequations}\label{eq:01:theo:qNmf_1} 
\begin{eqnarray}
  \|\mathrm{q}_{N}^m f\|_{W_{ m}^0}&\le& C\big[
 \|f\|_{W_{m}^0}+ N^{-1} \|f \|_{W_{ m}^1} + N^{-2} \|f \|_{W_{ m}^2}
 \big],\label{eq:01a:theo:qNmf_1} \\
 \|\mathrm{q}_N^{2m} f\|_{W_{2m}^1}&\le& C \Big[N\| f\|_{W_{2m}^0}+\|
f\|_{W_{2m}^1}+
 N^{-1}\| f\|_{W_{2m}^2}\Big],\label{eq:01b:theo:qNmf_1}  \\
  \|\mathrm{q}_N^{  2m+1} f\|_{W_{2m+1}^1}&\le& C(1+\sqrt{\log N}) \Big[N\|
f\|_{W_{2m+1}^0}+\| f\|_{W_{2m+1}^1}+
 N^{-1}\| f\|_{W_{2m+1}^2}\Big].\quad \label{eq:01c:theo:qNmf_1}  
\end{eqnarray}
\end{subequations}
Moreover, for all $r\ge 2$ there exists $C_r>0$ so that for all $m$
\begin{subequations}\label{eq:02:theo:qNmf_1} 
\begin{eqnarray}
\|\mathrm{q}_N^{m} f-f\|_{W_m^0}&\le& \modificationG{C_r}
N^{-r}\|f\|_{W_{m}^r},\label{eq:02a:theo:qNmf_1} \\
\|\mathrm{q}_N^{2m} f-f\|_{W_{2m}^1}&\le& \modificationG{C_r}
N^{1-r}\|f\|_{W_{2m}^r},\label{eq:02b:theo:qNmf_1}\\ 
\|{\mathrm{q}}_N^{2m+1} f-f\|_{W_{2m+1}^1}&\le& \modificationG{C_r}(1 +\sqrt{\log N})
N^{1-r}\|f\|_{W_{2m+1}^r}.\label{eq:02c:theo:qNmf_1} 
\end{eqnarray}
\end{subequations}
\end{theorem}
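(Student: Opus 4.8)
The statement separates into the three stability bounds \eqref{eq:01:theo:qNmf_1} and the three convergence bounds \eqref{eq:02:theo:qNmf_1}. The plan is to prove the former and then deduce the latter from them by the standard ``inverse estimate plus best approximation'' argument, comparing $\mathrm{q}_N^m f$ with the polynomial projections $\mathrm{p}_N^m f$ of Proposition~\ref{lemma:01}, which $\mathrm{q}_N^m$ reproduces exactly. The $W_m^0$ stability bound \eqref{eq:01a:theo:qNmf_1} requires no new argument: it is Corollary~\ref{cor:01} read with $r=2$.

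\textbf{Stability in $W_m^1$.} For \eqref{eq:01b:theo:qNmf_1}--\eqref{eq:01c:theo:qNmf_1} I would pass, for a nonzero index, through the norm equivalence between $\|\cdot\|_{W_m^1}$ and $\|\cdot\|_{Z_m^1}$ of Theorem~\ref{theo:equiv_norms}, which reduces matters to bounding the two quantities in \eqref{eq:terms}. The derivative term $\|(\mathrm{q}_N^m f)'\|_{L^2_{\sin}}$ is handled by the inverse inequality $\|(\mathrm{q}_N^m f)'\|_{L^2_{\sin}}\le CN\|\mathrm{q}_N^m f\|_{W_m^0}$ (obtained, as in the proof of Lemma~\ref{lemma:02}, from Lemma~\ref{lemma:inverseIneq} and Theorem~\ref{theo:equiv_norms}) followed by \eqref{eq:01a:theo:qNmf_1}, which yields exactly the right-hand side $C[N\|f\|_{W_m^0}+\|f\|_{W_m^1}+N^{-1}\|f\|_{W_m^2}]$. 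The singular-weight term $|m|\,\|\mathrm{q}_N^m f\|_{L^2_{\sin^{-1}}}$ is the substantive one: for an even index $2m\neq0$ it is controlled by Proposition~\ref{prop:02b}, and for an odd index $2m+1$ by Proposition~\ref{prop:02c}, the latter carrying the extra factor $1+\sqrt{\log N}$ that appears in \eqref{eq:01c:theo:qNmf_1}. The finitely many low indices not covered by those propositions (the index $0$ on the even side, the indices $\pm1$ on the odd side) are dealt with by the same scheme, using that $\|\cdot\|_{W_0^1}=\|\cdot\|_{Z_0^1}$ and that $\|\cdot\|_{W_m^1}$ is already equivalent to $\|\cdot\|_{Z_m^1}$ for $|m|\ge1$, together with the first half of the proof of Proposition~\ref{prop:02c} (which never invokes $|m|\ge2$) and, where convenient, the embedding $\|\cdot\|_{W_m^1}\le C(1+|m|)\|\cdot\|_{H^1_\#}$ of Proposition~\ref{prop:inclusion_in_Hr} combined with \eqref{eq:qnmHs}.

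\textbf{Convergence.} Fix $r\ge2$ and set $g:=f-\mathrm{p}_N^m f$, with $\mathrm{p}_N^m f$ the truncated Legendre series of Proposition~\ref{lemma:01} for $m\neq0$ and the Gauss--Lobatto interpolant for $m=0$. Since $\mathrm{p}_N^m f$ lies in $\mathbb{D}_N^{\rm e}$ for even $m$ and in $\mathbb{D}_{N-2}^{\rm o}$ for odd $m$, it is fixed by $\mathrm{q}_N^m$, hence
\[
 \mathrm{q}_N^m f-f=\mathrm{q}_N^m g-g.
\]
Moreover $g$ vanishes at $0$ and $\pi$ (for $m\neq0$ because $f$ and its truncation do, being elements of $W_m^1$; for $m=0$ because the Gauss--Lobatto nodes include the endpoints) and $g\in W_m^2$ because $r\ge2$, so the stability bounds of the previous step apply to $g$. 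Combining the triangle inequality, \eqref{eq:01a:theo:qNmf_1} (respectively \eqref{eq:01b:theo:qNmf_1}, \eqref{eq:01c:theo:qNmf_1}) applied to $\mathrm{q}_N^m g$, and the best-approximation estimates $\|g\|_{W_m^s}\le C_{s,r}N^{s-r}\|f\|_{W_m^r}$ for $s=0,1,2$ from Proposition~\ref{lemma:01}, all powers of $N$ collapse: $N^{-r}+N^{-1}N^{1-r}+N^{-2}N^{2-r}\sim N^{-r}$ gives \eqref{eq:02a:theo:qNmf_1}, and $N\cdot N^{-r}+N^{1-r}+N^{-1}N^{2-r}\sim N^{1-r}$ gives \eqref{eq:02b:theo:qNmf_1}--\eqref{eq:02c:theo:qNmf_1}, the $\sqrt{\log N}$ being carried along in the odd case.

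\textbf{Where the difficulty lies.} With Corollary~\ref{cor:01}, Lemma~\ref{lemma:02}, Lemma~\ref{lemma:aux5}, Lemma~\ref{lemma:inverseIneq} and Propositions~\ref{prop:02b}--\ref{prop:02c} already available (all conditional, as throughout, on Hypothesis~\ref{hypo:01}), Theorem~\ref{theo:qNmf_1} is largely an assembly step and a careful accounting of powers of $N$. The genuine obstruction — already isolated inside Proposition~\ref{prop:02c} — is the odd-mode estimate for $\|\mathrm{q}_N^{\rm o}f\|_{L^2_{\sin^{-1}}}$: testing the interpolant against the singular weight $1/\sin\theta$ exposes the boundary values $(\mathrm{q}_N^{\rm o}f/\sin)(0)$ and $(\mathrm{q}_N^{\rm o}f/\sin)(\pi)$, which the Lagrange-basis bound of Lemma~\ref{lemma:aux5} can only control at the cost of a logarithmic factor; this is precisely the source of the $(\log N)^{s/2}$ loss in the main Theorem~\ref{theo:main}. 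The only other delicate point is the mild degeneration of the clean $Z_m^s$--$W_m^s$ norm equivalences of Theorem~\ref{theo:equiv_norms} at the low modes $|m|\le2$, absorbed by the ad hoc arguments indicated above.
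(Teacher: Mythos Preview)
Your proposal is correct and follows essentially the same route as the paper: stability via Corollary~\ref{cor:01}, Lemma~\ref{lemma:02}, and Propositions~\ref{prop:02b}--\ref{prop:02c}, then convergence by applying the stability bounds to $g=f-\mathrm{p}_N^m f$ and invoking Proposition~\ref{lemma:01}. Two small points where the paper is tidier than your sketch: for the exceptional odd indices $2m+1=\pm1$ the paper simply applies the inverse inequality of Lemma~\ref{lemma:inverseIneq} to \eqref{eq:01a:theo:qNmf_1} rather than revisiting the proof of Proposition~\ref{prop:02c} (whose final step does need $|2m+1|\ge3$ through the $Z_m^2$--$W_m^2$ equivalence of Theorem~\ref{theo:equiv_norms}); and for $m=0$ the paper first proves \eqref{eq:01a:theo:qNmf_1}--\eqref{eq:01b:theo:qNmf_1} only under the extra hypothesis $f(0)=f(\pi)=0$ (which is all Corollary~\ref{cor:01} gives), uses that restricted version on $g$ to obtain \eqref{eq:02a:theo:qNmf_1}--\eqref{eq:02b:theo:qNmf_1}, and then reads the general-$f$ stability back off from the convergence.
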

\begin{proof} 
\modificationG{We recall that for $m\ne 0$,  if $f\in W_m^2$, then $f$ vanishes at $0,\pi$.  
We first consider the case $m\ne 0$: Corollary \ref{cor:01}  yields the bounds  \eqref{eq:01a:theo:qNmf_1};  the bound 
 \eqref{eq:01b:theo:qNmf_1} follows from Lemma~\ref{lemma:02} and  Proposition \ref{prop:02b}; and \eqref{eq:01c:theo:qNmf_1} is a consequence of  Lemma~\ref{lemma:02}  and Proposition  \ref{prop:02c}. The latter conclusion applies also for the case $m \ne -1$.
For $m= 0,-1$, \eqref{eq:01c:theo:qNmf_1} can be deduced similarly, via the inverse inequalities applied to estimate \eqref{eq:01a:theo:qNmf_1}. 
For $m=0$, and under the additional assumption that $f(0)=f(\pi)=0$, 
the  bound \eqref{eq:01a:theo:qNmf_1} was also established in Corollary \ref{cor:01}.
The estimate \eqref{eq:01b:theo:qNmf_1} follows from combining \eqref{eq:01a:theo:qNmf_1} and the inverse inequalities stated in Lemma \ref{lemma:inverseIneq}. 
}

With ${\mathrm{p}}_N^m$ being  the projection introduced in Proposition~\ref{lemma:01}, we 
observe that 
\[
\|\mathrm{q}_{N}^{m}f-f\|_{L^2_{\rm sin}}\le
\|\mathrm{q}_{N}^{m}(f-\mathrm{p}_{N}^mf)\|_{L^2_{\rm sin}}+
\|\mathrm{p}_{N}^{m}f-f\|_{L^2_{\rm sin}}.
\] 
Further, $\big(f-\mathrm{p}_N^{m}f\big)(0)=\big(f-\mathrm{p}_N^{m}f\big)(\pi)=0$ even
for $m=0$. Thus Corollary \ref{cor:01} (or \eqref{eq:01:theo:qNmf_1} in the cases proven
up to now) can be applied to derive the bound  
\begin{eqnarray*}
 \|\mathrm{q}_{N}^{m}f-f\|_{L^2_{\rm sin}}&\le& \modification{C}
\big[ \|f-\mathrm{p}_N^{m}f\|_{L^2_{\rm
sin}}+N^{-1}\|f-\mathrm{p}_N^{ m}f\|_{W_m^1}+
N^{-2}\|f-\mathrm{p}_N^{ m}f\|_{W_m^2}\big],
\end{eqnarray*}
where $C$ is independent of $m$ and $f$. Now~\eqref{eq:02a:theo:qNmf_1} 
follows from  Proposition \ref{lemma:01}. 

To prove \eqref{eq:02b:theo:qNmf_1},  we proceed as before,
using Lemma~\ref{lemma:02}  and Proposition \ref{prop:02b},   to  obtain the inequality
\[
  \|\mathrm{q}_{N}^{2m}f-f\|_{W_{2m}^1}\le C\Big[N
\|f-\mathrm{p}_N^{2m}f\|_{W_{2m}^0}+
   \|f-\mathrm{p}_N^{2m}f\|_{W_{2m}^1}+N^{-1}
\|f-\mathrm{p}_N^{2m}f\|_{W_m^2}\Big].
\]
Hence Proposition \ref{lemma:01} yields the estimate~\eqref{eq:02b:theo:qNmf_1}.

The procedure for proving estimate \eqref{eq:02c:theo:qNmf_1} is completely analogous. 
We note that \eqref{eq:01a:theo:qNmf_1}-\eqref{eq:01b:theo:qNmf_1} for $m=0$ in the general case  (i.e., for functions not vanishing at $\{0,\pi\}$)
can be now deduced from \eqref{eq:02a:theo:qNmf_1}-\eqref{eq:02b:theo:qNmf_1}.
\end{proof}

Next we are ready conclude  Section~\ref{sec:err_est}  by proving the main 
spectrally accurate convergence result of this  article, namely,  Theorem \ref{theo:main}. 

\subsection{Proof of Theorem \ref{theo:main}}
The proof starts from \eqref{eq:01:proof:MainTheo} where we 
have derived  
\[
  \|{\cal Q}_NF-F\|_{{\cal H}^s}^2\le E_1+E_2+E_3 
\]
with
\begin{eqnarray}
 E_1&:=&\sum_{-N+1\le m\le N} 
 \|{\rm q}_N^{m} {\cal F}_mF-{\cal F}_mF\|^2_{W_m^s}\\
 E_2&:=&\sum_{-N+1\le m\le N} 
 \|{\rm q}_N^{m} (\rho_N^mF-{\cal F}_mF)\|^2_{W_m^s} \\
E_3&=&\bigg[\sum_{m\ge  N+1}+\sum_{m\le -N} \bigg]
 \|{\cal F}_mF\|^2_{W_m^s} 
\end{eqnarray}
For the sake of simplicity we can restrict ourselves to consider only $F\in {\rm span}\:\{Y_n^m\}$ which makes the sums above to be finite. The general case can be deduced by a density argument. Moreover, we can 
take $s\in\{0,1\}$ since the result for intermediate values of $s$ follows from the theory of 
interpolation of Sobolev spaces~\cite{McLean:2000}.

The third term can be estimated with the help of \eqref{eq:inequatityWms} and \eqref{eq:Norm_decomposition}, as follows: For $t\ge s$,  
\begin{equation}\label{eq:E3}
 E_3 \le \sum_{|m|\ge  N} \left(|m|+\tfrac{1}2\right)^{2s -2t}
 \|{\cal F}_mF\|^2_{W_m^t} \le \sum_{|m|\ge  N}  N^{2s-2t}
 \|{\cal F}_mF\|^2_{W_m^t}\le N^{2(s-t)}\|F\|_{{\cal H}^t}^2.
\end{equation}
For $E_1$, we \modification{apply} \eqref{eq:02:theo:qNmf_1} to obtain
\begin{eqnarray}
 E_1\le C(1+\sqrt{\log N})^{s} N^{2s-2t} \sum_{-N+1\le m\le N} \|{\cal F}_m F\|_{W_m^t}^2\le
 C(1+\sqrt{\log N})^{s} N^{2s-2t}\|F\|_{{\cal H}^t}^2. \label{eq:E1}
\end{eqnarray}
Regarding $E_2$, we apply  \modification{the definition of $\rho_N^m$ in \eqref{eq:interp_new_rep} and estimates \eqref{eq:01:theo:qNmf_1} of Theorem \ref{theo:qNmf_1} to obtain first
\begin{eqnarray}
E_2&\le&  \sum_{-N+1\le m\le N} \bigg[\sum_{\ell\ne 0}
 \|{\rm q}_N^{m}{\cal F}_{m+2\ell N} F\|_{W_{m}^s}\bigg]^2\nonumber\\
 &\le& C(1+\log N)^s N^{2s}\sum_{-N+1\le m\le N} \sum_{j=0}^2 N^{-2j}\bigg[\sum_{\ell\ne 0}  
 \| {\cal F}_{m+2\ell N} F\|_{W_{m}^j}
 \bigg]^2. \label{eq:new00}
 \end{eqnarray}
Let us study now the three terms in the last sum above. Cauchy-Schwarz inequality and \eqref{eq:ineq:01}-\eqref{eq:ineq:02} leads to
\begin{eqnarray}
\bigg[\sum_{\ell\ne 0}  
 \| {\cal F}_{m+2\ell N} F\|_{W_{m}^j}
 \bigg]^2&\le& 9\bigg[\sum_{\ell\ne 0}
 \frac{1}{|m+2\ell N|^{2t-2j}}\bigg]\bigg[\sum_{\ell\ne 0} |m+2\ell N|^{2t-2j}
 \|{\cal F}_{m+2\ell N} F\|_{W_{m+2\ell N}^j}^2\bigg]\qquad 
 \label{eq:new01}
\end{eqnarray}
for $j=0,1,2$.  Since
\[
 \sum_{\ell \ne 0}\frac{1}{|x+\ell|^{r}}\le C_r,\quad \forall x\in[-1/2,1/2]
\]
with $C_r$ depending only $r>1$, we can bound \eqref{eq:new01} (recall that we have assumed that $t>5/2$) as follows
\begin{eqnarray}
\bigg[\sum_{\ell\ne 0} 
 \| {\cal F}_{m+2\ell N} F\|_{W_{m}^j}
 \bigg]^2&\le& C_t N^{2j-2t} 
 \bigg[\sum_{\ell\ne 0} |m+2\ell N|^{2t-2j}
 \|{\cal F}_{m+2\ell N} F\|^2_{W_{m+2\ell N}^j} 
 \bigg]\nonumber\\
 &\le& C_t N^{2j-2t} \bigg[\sum_{\ell\ne 0} 
 \|{\cal F}_{m+2\ell N} F\|^2_{W_{m+2\ell N}^t} 
 \bigg]\label{eq:new02}
\end{eqnarray}
where in the last step we have applied inequality \eqref{eq:inequatityWms}. Plugging
\eqref{eq:new02} in \eqref{eq:new00}, and using \eqref{eq:Norm_decomposition}, we deduce finally 
\begin{eqnarray}
E_2\!\!&\le&\!\!  C(1+\log N)^s N^{2s-2t} \!\!\!\!\!\!
\sum_{-N+1\le m\le N}  \sum_{\ell\ne 0} \|{\cal F}_{m+2\ell N} F\|^2_{W_{m+2\ell N}^t} 
\le C(1+\log N)^s N^{2s-2t} \|F\|_{{\cal H}^t}^2.\qquad
  \label{eq:E2}
\end{eqnarray} 
}

Gathering bounds \modification{\eqref{eq:E3}, \eqref{eq:E1} and \eqref{eq:E2}},  we obtain the spectrally accurate
convergence estimate~\eqref{eq:main_res} in Theorem \ref{theo:main}.

\section{A FFT-based interpolatory cubature on the sphere}\label{sec:quad}

 As described in the introduction, interpolatory cubature rules on the sphere are
 important in several applications, including the radiative transfer and wave
 propagation models. Using the FFT-based spherical interpolatory operator,
 for a (wavenumber) parameter $\kappa$, 
 we develop a cubature rule to approximate the  following (non--, mild--, and highly--oscillatory) integral on the sphere:
 \begin{eqnarray}
  {\cal I}_{\kappa}F&:=&\int_{0}^\pi \! \int_{0}^{2\pi}\!F(\theta,\phi)\exp({\rm i}\kappa \cos\theta)
  \,\sin\theta\,{\rm d}\phi\,{\rm d}\theta=\iint_{\Sp} F^\circ({\bf x})\exp({\rm i}\kappa{\bf x}\cdot[0,0,1]) {\rm d}S({\bf x}).\label{eq:integral}
 \end{eqnarray}
 In the integral above the parameter $\kappa$ is a real  number. 
 Therefore, \eqref{eq:integral} includes
 standard integrals as well as a class of highly--oscillatory integrals for large
 values of $\kappa$. In wave propagation applications, the $[0, 0, 1]$ 
 corresponds to the direction of the incident wave. The rotationally invariant
 property of the sphere facilitates fixing such an incident direction. The above
 integral occurs, for example, in developing efficient computer models to 
 simulate scattered wave (and its far-field) from an acoustically/electromagnetically
 small, medium, and large closed obstacles~\cite{GH:09, GH:11, GH:13, GH:14} with
 compact  simply connected surface, leading to  surface integral reformulations of the model on the sphere.
 The integral for the $\kappa = 0$ case occurs in potential theory and radiative transport models. 
 
For the FFT-based efficient cubature approximation of the integral, we first consider a Filon-type
product integration interpolatory  approximation  
 \begin{eqnarray}
  {\cal I}_{N,\kappa}F&:=&\int_{0}^\pi \! \int_{0}^{2\pi}\! \big({\cal Q}_NF\big)(\theta,\phi)\exp({\rm i}\kappa \cos\theta)
 \sin\theta \, {\rm d}\phi\,{\rm d}\theta.
  \end{eqnarray}
  Using the representation  
  \[
   ({\cal Q}_NF)( \theta,\phi)=p_0(\theta)+\!\!\sum_{ -N/2  <m \le N/2\atop  \text{odd }m\ne0}\!\!\!\!
\sin\theta\, p_m(\theta)\exp({\rm i}m\phi)+\!\!\sum_{ -N/2  <m \le N/2\atop  \text{even }m\ne0}\!\!\!\!
\sin^2\theta\, p_m(\theta)\exp({\rm i}m\phi) 
  \]
  ($p_m\in\mathbb{D}_{N-2}^{\rm e}$ if $m\ne 0$), 
we obtain 
\[
 {\cal I}_{N,\kappa}F  =  {2\pi} \int_0^\pi p_0(\theta)\exp({\rm i}\kappa\cos\theta)  \sin\theta\,{\rm d\theta}=\sqrt{2\pi} \int_0^\pi \big({\cal F}_0 {\cal Q}_NF\big)(\theta)\exp({\rm i}\kappa\cos\theta)  \sin\theta\,{\rm d}\theta.
\]
From this property and Proposition \ref{prop:prop01} we  see how this cubature  rule
can efficiently implemented:
\begin{itemize}
 \item \label{item:fj0}Compute 
 \[
  f_{j,0}:=\frac{1}{2N}\sum_{k=0}^{2N-1} F(\theta_j,\phi_k)
 \]
 \item Construct
 \[
  \left(\alpha^{0}_\ell \right)_{\ell=0}^N :=  {\bf DCT}_N((f_{j,0})_{j=0,\ldots,N}), 
 \]
\item Return
\[
 {\cal I}_{N,\kappa}F=2\pi \sum_{\ell =0}^{N} \alpha_\ell\omega_\ell(\kappa)
\]
where
\begin{eqnarray}
 \omega_\ell(\kappa)&:=&2\pi \int_{0}^\pi \cos \ell\theta \exp({\rm i}\kappa\cos\theta)\sin\theta\,{\rm d}\theta=
 2\pi \int_{-1}^1 T_{\ell}(x)\exp({\rm i}\kappa x){\rm d}x.\label{eq:theweights}
\end{eqnarray}
\end{itemize}
The cost of computing $(\alpha_\ell)_{\ell=0}^N$ is about ${\cal O}(N^2)$, and is dominated by the 
first step of the algorithm. The weights \eqref{eq:theweights} ($T_\ell$ denotes the Chebyshev polynomial of degree $\ell$) can be computed 
in a stable and fast way in ${\cal O} (N)$ operations \cite{DoGrSm:2010}. 

For the error analysis  of the rule, based on~\eqref{eq:interp_new_rep}-\eqref{eq:01:proof:MainTheo}, we first arrive at the  following formula:
\begin{eqnarray}
 {\cal I}_{N,\kappa}F -{\cal I}_{\modification{\kappa}}F &=&\sqrt{2\pi} \int_0^\pi \Big[\big({\cal F}_0 {\cal Q}_NF\big)(\theta)-\big({\cal F}_0 F\big)(\theta)\Big] \sin\theta\,{\rm d}\theta\\
 &=&\sqrt{2\pi} \int_0^\pi \Big[\big({\rm q}_N^{\rm e}\rho_N^0F\big)(\theta)-\big({\cal F}_0 F\big)(\theta)\Big] \sin\theta\,{\rm d}\theta.
\end{eqnarray}
Thus, with  $e_N:={\rm q}_N^{\rm e}\rho_N^0F-{\cal F}_0F$, the cubature approximation error is bounded by  $\sqrt{2\pi}\|e_N\|_{L^2_{\rm sin}}$,  so 
that we first ensure the convergence should be independent of $\kappa$. Actually this estimate
can be improved  by performing integration by parts, to obtain high-order decay in the error
for large values of $\kappa$.  Using $e_N(0)=e_N(\pi)=0$, we obtain
\begin{eqnarray}
  {\cal I}_{N,\kappa}F -{\cal I}_{\modification{\kappa}}F &=& -\frac{\sqrt{2\pi}\, {\rm i}}{\kappa}\int_0^{\pi} e_N'(\theta)\exp({\rm i}\kappa\cos\theta)\,{\rm d}\theta
  \\ &=& 
  \frac{\sqrt{2\pi}}{\kappa^2}\bigg[
 \frac{1}{\sin\theta} e_N'(\theta) \exp({\rm i}\kappa\cos\theta)\bigg|_{\theta=0}^{\theta=\pi}
 \nonumber \\
  & & 
 -\int_0^{\pi}  \Big(\frac{1}{\sin\theta} e_N'(\theta)\Big)'\exp({\rm i}\kappa\cos\theta)\,{\rm d}\theta
  \bigg].
\end{eqnarray}
We observe that for sufficiently smooth $F$, $e_N'(0)=e_N'(\pi)=0$ and therefore the pointwise value of 
$\frac{1}{\sin (\cdot) } e_N'(\cdot)$ at these points as well as the last integral are well defined. 

Below we  present the error estimate and convergence result for the cubature rule.
We omit a detailed  analysis of the estimate since 
it can be proved 
using  arguments similar to that  we developed (for a similar rule)  and analyzed in~\cite[Section 5]{DoGa:2012}.

\begin{theorem}\label{theo:convQuadRule}
Let $F \in {{\cal H}^r}$. For $\ell=0,1$ and $r>3/2$ or $\ell=2$ and
$r>4$  
\begin{equation}\label{eq:02:theo:convQuadRule}
  | I_\kappa (F)- I_{\kappa,N}  (F)|   \le C_r \kappa^{-\ell}
N^{\eta(\ell)-r}\|F\|_{{\cal H}^r},
\end{equation}
 where  $C_r$  independent of $N$ and  
\[
 \eta(\ell):=\left\{
\begin{array}{ll}
0,\quad &\text{if $\ell=0$},\\
 3/2,\quad&\text{if $\ell=1$},\\
4,\quad&\text{if $\ell=2$}.
\end{array}
\right.
\]
\end{theorem}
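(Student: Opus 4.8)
The plan is to reduce the cubature error to the one--dimensional quantity $e_N:={\rm q}_N^{\rm e}\rho_N^0 F-{\cal F}_0 F$ and then estimate it, keeping careful track of how many $\theta$--derivatives can be extracted and how many integrations by parts are allowed. First I would record the identity (already derived in the excerpt)
\[
  {\cal I}_{N,\kappa}F-{\cal I}_\kappa F=\sqrt{2\pi}\int_0^\pi e_N(\theta)\sin\theta\,{\rm d}\theta,
\]
which by Cauchy--Schwarz immediately gives the $\ell=0$ estimate $|{\cal I}_{N,\kappa}F-{\cal I}_\kappa F|\le\sqrt{2\pi}\,\|e_N\|_{L^2_{\sin}}$. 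The task then splits into (i) an estimate for $\|e_N\|_{L^2_{\sin}}$ and, for $\ell=1,2$, estimates for $\|e_N'/\sqrt{\sin}\|_{L^2_{\sin^{-1}}}$--type quantities arising after one and two integrations by parts.

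For step (i) I would mimic the decomposition used in the proof of Theorem~\ref{theo:main}, but only for the single Fourier mode $m=0$. Writing $e_N=({\rm q}_N^{\rm e}{\cal F}_0F-{\cal F}_0F)+{\rm q}_N^{\rm e}(\rho_N^0F-{\cal F}_0F)$, the first summand is controlled by \eqref{eq:02a:theo:qNmf_1} (the $m=0$ case, noting ${\cal F}_0F$ need not vanish at $0,\pi$, which is exactly the case covered by Proposition~\ref{lemma:01}/Corollary~\ref{cor:01}), giving $C_r N^{-r}\|{\cal F}_0F\|_{W_0^r}\le C_r N^{-r}\|F\|_{{\cal H}^r}$. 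The aliasing tail $\rho_N^0F-{\cal F}_0F=\sum_{\ell\ne0}{\cal F}_{2\ell N}F$ is handled exactly as the term $E_2$ in the proof of Theorem~\ref{theo:main}: apply \eqref{eq:01a:theo:qNmf_1} to bound $\|{\rm q}_N^0(\cdot)\|_{W_0^0}$, then Cauchy--Schwarz in $\ell$ together with \eqref{eq:ineq:01} and \eqref{eq:inequatityWms}, using $r>3/2$ so that $\sum_{\ell\ne0}|2\ell N|^{-2r}$ converges. This yields $\|e_N\|_{L^2_{\sin}}\le C_r N^{-r}\|F\|_{{\cal H}^r}$, i.e.\ $\eta(0)=0$.

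For $\ell=1$ and $\ell=2$ I would use the two integration-by-parts formulas displayed in the excerpt, which are legitimate because $e_N(0)=e_N(\pi)=0$ always and $e_N'(0)=e_N'(\pi)=0$ for sufficiently smooth $F$ (this is where the stronger regularity thresholds $r>3/2$, resp.\ $r>4$, enter). After one integration by parts the error is $\tfrac{\sqrt{2\pi}}{\kappa}$ times $\int_0^\pi e_N'(\theta)\exp({\rm i}\kappa\cos\theta)\,{\rm d}\theta$, which I would bound by $\|e_N'\|_{L^1(0,\pi)}\le\pi^{1/2}\|e_N'\|_{L^2_{\sin}}^{1/2}\|e_N'\|_{L^2_{\sin^{-1}}}^{1/2}$ or, more simply, estimate $\|e_N'\|_{L^2_{\sin}}$ via the $W_0^1$--bound \eqref{eq:02b:theo:qNmf_1} (with the same aliasing argument for the tail) and absorb the extra half power of $N$ into $\eta(1)=3/2$; the half-power loss is the familiar price of converting an $L^2_{\sin}$ bound on $e_N'$ into the $L^1$ bound the oscillatory integral needs, using an inverse-type inequality on $\mathbb{D}_N^{\rm e}$. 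The $\ell=2$ case is analogous but one derivative further: after the second integration by parts one must control $\bigl\|(e_N'/\sin)'\bigr\|_{L^1(0,\pi)}$ and the boundary term $(e_N'/\sin)\exp({\rm i}\kappa\cos\theta)\big|_0^\pi$, both of which require $e_N''$ information and hence the $W_0^2$--type estimates; combining with inverse inequalities produces the loss $\eta(2)=4$.

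The main obstacle is the $\ell=2$ step: making sense of $e_N'/\sin$ and its derivative near the poles, and bounding them in $L^1$ by Sobolev-type norms of $F$ with only the loss $N^{4}$. This requires the full strength of the weighted-norm machinery of Theorem~\ref{theo:equiv_norms} applied to the mode $m=0$ (where the weight $\sin^{-1}$ is not automatically finite, unlike $m\ne0$), careful use of $e_N'(0)=e_N'(\pi)=0$, and the inverse inequalities of Lemma~\ref{lemma:inverseIneq}. Since all the ingredients are already in place and the argument runs parallel to \cite[Section~5]{DoGa:2012}, I would, as the authors do, state the estimate and refer to that analysis rather than reproduce the weighted-norm bookkeeping in full.
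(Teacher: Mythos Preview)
Your proposal is correct and matches the paper's own treatment: the paper sets up exactly the same error identity for $e_N={\rm q}_N^{\rm e}\rho_N^0F-{\cal F}_0F$, records the same two integrations by parts, and then explicitly omits the remaining analysis, referring instead to \cite[Section~5]{DoGa:2012}. Your decomposition of $e_N$ into interpolation error plus aliasing tail, handled via Theorem~\ref{theo:qNmf_1} and the $E_2$--type argument from the proof of Theorem~\ref{theo:main}, is precisely the $m=0$ specialization needed, and your decision to defer the $\ell=2$ bookkeeping to \cite{DoGa:2012} mirrors the authors' choice.

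One small sharpening: the ``half-power loss'' you mention for $\ell=1$ (and the analogous $N^4$ for $\ell=2$) is, in the \cite{DoGa:2012} argument, not obtained by a direct inverse inequality on $e_N'$ but via an intermediate lemma that bounds $\|e_N\|_{H^s_\#}$ in terms of $N^{s+1/2}\|e_N\|_{W_0^0}+N^{s-1/2}\|e_N\|_{W_0^1}+N^{s-r+1/2}\|{\cal F}_0F\|_{W_0^r}$ (obtained by inserting the $H^0_\#$-orthogonal projection onto $\mathbb{D}_N^{\rm e}$ and using Proposition~\ref{prop:inclusion_in_Hr}). Feeding in the $W_0^0$ and $W_0^1$ estimates you already have then gives $\|e_N\|_{H^s_\#}\le C_rN^{s+1/2-r}\|F\|_{{\cal H}^r}$, from which $\eta(1)=3/2$ (take $s=1$) and $\eta(2)=4$ (take $s=0,3,$ and $s'\in(7/2,r-1/2)$ after the second integration by parts) drop out cleanly. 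Your sketch is compatible with this; I mention it only so you see exactly where the exponents come from.
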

\section{Numerical experiments}\label{sec:num_exp}

In this section we demonstrate the main interpolatory spectrally accurate approximation
result~\eqref{eq:main_res} and  the high-order  cubature approximation result~\eqref{eq:02:theo:convQuadRule}
for functions with various order of smoothness. We  also demonstrate that the \modificationG{construction} of full FFT-based interpolatory
approximation developed in this article using  the uniform-grid and the ${\cal Q}_N$ operator  is faster, 
even for small to medium sized data locations, than another efficient 
similarly accurate interpolation operator ${\cal Q}_N^{\rm gl}$.  We developed 
the operator ${\cal Q}_N^{\rm gl}$ in~\cite{DoGa:2012}, using Gauss-Lobatto points in latitudinal angle, 
that facilitates the use of the standard FFT only in the azimuthal variable. 

For calculation of the ${\cal H}^t$ norms, for $t = 0, 1$, used
in~\eqref{eq:main_res}, 
we  apply the following integral based formulas:
\begin{eqnarray} 
\hspace{-0.3in} \|F\|_{{\cal H}^0}^2&:=& \int_0^\pi\!\!
\int_0^{2\pi}|F(\theta,\phi)|^2\sin\theta\,{\rm d}\phi\,{\rm d}\theta \label{eq:H0norm} \\
\hspace{-0.3in} \|F\|_{{\cal H}^1}^2&:=& \frac{1}{4} \|F\|_{{\cal H}^0}^2 \int_{0}^\pi\!\int_{0}^{2\pi}
\bigg|\frac{\partial
F}{\partial \phi}(\theta,\phi)\bigg|^2\frac{1}{\sin\theta}\,{\rm d}\phi{\rm
d\theta}+\int_{0}^\pi\!\int_{0}^{2\pi}
\bigg|\frac{\partial
F}{\partial \theta}(\theta,\phi)\bigg|^2\ {\sin\theta}\,{\rm
d}\phi\,{\rm d\theta}.    \label{eq:H1norm}
\end{eqnarray}
Except for some trivial cases, the above norms cannot be evaluated exactly. We computed the above
norms for tabulated results in this section using over $150,000$ quadrature points on the sphere,
taking into account that some of the  functions considered in this section have only limited smoothness properties and hence require fine grids  to compute with sufficiently high
accuracy.

\subsection*{Experiment \#1 (Approximation of smooth and limited smooth functions)}

For the  first set of  experiments we consider  interpolatory approximation of test functions:
\[
 F_1^\circ(x,y,z):=\frac{1}{4+x+y+z},\qquad  
 F_j^\circ (x,y,z):=(1-x^2)^{5/2-j}yz, \quad \text{for $j=2,3$} \quad \text{and}\quad  (x,y,z) \in \Sp.
\]
Recalling~\eqref{eq:x}-\eqref{eq:01}, the corresponding equivalent functions are
\begin{eqnarray*}
  F_1(\theta,\phi)&:=&\frac{1}{4+\sin\theta\cos\phi+ \sin\theta\sin\phi  +\cos\theta} \\
  F_j(\theta,\phi)&:=&(1-\sin^2\theta \cos^2\phi)^{5/2-j}\sin\theta\sin\phi\cos\theta,\quad
  \text{$j=2,3$}. 
\end{eqnarray*}
Clearly $F^\circ_1$ is a smooth function, and hence our theoretical result~\eqref{eq:main_res} 
suggests superalgebraic convergence  ${\cal Q}_NF^\circ_1$  to $F^\circ_1$   in both the 
${\cal H}^0$ and ${\cal H}^1$ norms. Computational results in Table~\ref{Tab:01} validate the theoretical result
and demonstrate the power of obtaining machine precision accurate approximation of the smooth
function with $N = 32$. 

\begin{table}[ht]
\centerline{
\begin{tabular}{ |c|c|c|c|c| }
\hline
$N$& $\|{\cal Q}_NF_1-F_1\|_{{\cal H}^0}$ &EoC &$\|{\cal
Q}_NF_1-F_1\|_{{\cal H}^1}$&EoC\\
\hline
008& 4.86{\rm E}-06 & & 4.40{\rm E}-05&\\
016& 2.02{\rm E}-11 & 17.9& 3.37{\rm E}-10 &17.0\\
032& 5.78{\rm E}-15 & 11.8&  7.82{\rm E}-15 &15.4\\
\hline
\end{tabular}}
\caption{Approximation  of $F_1$ by ${\cal Q}_NF_1$: Errors and estimate order of convergence  (EoC) 
\label{Tab:01}}
 \end{table} 

The functions $F_2^\circ$ and $F_3^\circ$ have only limited regularity.
It can be shown that, for any $\varepsilon > 0$,  $F_2^\circ\in{\cal H}^{4-\varepsilon}$ and
$F_3\in {\cal H}^{2-\varepsilon}$. Indeed, using an atlas with local charts around 
the singularities [points $(\pm 1,0,0) \in \Sp$] one can easily see that the Sobolev regularity of $F_j^\circ$ 
are the same as the functions $\tilde{F}^\circ_j(y,z):= (y^2+z^2)^{5/2-j}y$. 
Hence according to our theoretical result~\eqref{eq:main_res}, the estimated order
of convergence (EoC) in approximating $F_2$ by  ${\cal Q}_NF_2$  in  the ${\cal H}^0, {\cal H}^1$
norms are respectively {\em almost}  $4$ and $3$ and
that for  $F_3$ by ${\cal Q}_NF_3$  are respectively {\em almost} $2$ and $1$. Computational results in
Table~\ref{Tab:02}  validate the theoretical result~\eqref{eq:main_res}.

\begin{table}[h]
\centerline{
\begin{tabular}{ |c|c|c|c|c| }
\hline
$N$& $\|{\cal Q}_NF_2-F_2\|_{{\cal H}^0}$ &EoC &$\|{\cal
Q}_NF_2-F_2\|_{{\cal H}^1}$&EoC \\
\hline 
008&  1.43{\rm E}-03  &             &  1.41{\rm E}-02  &           \\ 
016&  8.14{\rm E}-05 &  4.14 &  1.55{\rm E}-03  & 3.18\\
032&  5.01{\rm E}-06 &  4.02 &  1.90{\rm E}-04  & 3.03\\
064&  3.12{\rm E}-07 &  4.01 &  2.36{\rm E}-05  & 3.00\\
128&  1.87{\rm E}-08 &  4.06 &  3.07{\rm E}-06  & 2.95\\
\hline
\end{tabular}}

\ \\

\centerline{
\begin{tabular}{ |c|c|c|c|c| }
\hline
$N$& $\|{\cal Q}_NF_3-F_3\|_{{\cal H}^0}$ &EoC &$\|{\cal
Q}_NF_3-F_3\|_{{\cal H}^1}$&EoC \\
\hline
008&  2.76{\rm E}-02 &     &    3.21{\rm E}-01  &     \\
016& 6.92{\rm E}-03 &  2.00 &  1.58{\rm E}-01  & 1.02\\
032& 1.73{\rm E}-03 &   2.00 &  7.93{\rm E}-02 &  1.00\\
064&4.33{\rm E}-04 &  2.00 &  3.97{\rm E}-02  & 1.00\\
128&1.08{\rm E}-04 &  2.00 &  1.99{\rm E}-02 &  0.99 \\
\hline
\end{tabular}}
\caption{Approximation of $F_2$ and $F_3$ by ${\cal Q}_NF_2$ and  ${\cal Q}_NF_3$: Errors and EoC
\label{Tab:02}}
\end{table}

\subsection*{Experiment \#2 (Accuracy and fast evaluation comparison with a recent  work)}

For this experiment we compare the performance \modificationG{in construction}, in terms of error and computation time, of the FFT-based interpolant  developed in this article  with the interpolant considered 
in~\cite{DoGa:2012} (and first proposed, not analyzed, in \cite{GaMha:2006}). This interpolant
shares the same discrete space, $\chi_N$, and the nodes in the azimuthal angle $\{\phi_j\}$. The difference 
is  on the nodes in the latitudinal angle which were chosen in~\cite{DoGa:2012} 
to be the non-uniform grid points $\{\theta_i = \arccos{\eta}_i\}_{i=0}^N$ where
$\{\eta_i\}_{i=0}^N$ are the Gauss-Lobatto points of the quadrature rule for approximating integrals in $[-1,1]$. In other 
words, $\eta_0=-1$, $\eta_N=1$ and $\eta_i$ for $i=1,\ldots,N-1$ are the roots of $P_N'(x)$ where
$P_N$ is the Lagrange polynomial of degree $N$. We recall that this 
non-uniform Gauss-Lobatto points based interpolant is denoted as  ${\cal Q}_N^{\rm gl}$.

In~\cite{DoGa:2012} we proved that for $F \in  {\cal H}^t$ and $s = 0, 1$, 
\[
 \|{\cal Q}^{\rm gl}_N F-F\|_{{\cal H}^s}\le CN^{s-t}
 \|F\|_{{\cal H}^t},
\]
which is, up to the $(\log N)^{s/2}$ term,  identical to the error estimate in~\eqref{eq:main_res} that we proved for the FFT-based operator  ${\cal Q}_N$. 
For our comparison testing purpose, we have chosen the function
\[
 F_4(\theta,\phi)=\left[\left(\frac{1}{\sqrt{3}}-\sin\theta\cos\phi \right)^{2}+
 \left(\frac{1}{\sqrt{3}}-\sin\theta\sin\phi \right)^{2}+
 \left(\frac{1}{\sqrt{3}}-\cos\theta  \right)^{2}\right]^{3/2}\in{\cal H}^{4-\varepsilon}
\]
which   corresponds to the  function $F_4^\circ(\xh)=\left|\xh - \x^*\right|^3,~  \xh\in\Sp$ with
 $\x^* = [1/\sqrt{3},1/\sqrt{3},1/\sqrt{3}]$.
The error in ${\cal H}^0$ and ${\cal H}^1$ norms and associated EoC  are depicted in Table \ref{Tab:04} for  ${\cal Q}_N$ 
and ${\cal Q}_N^{\rm gl}$. Similar to our established
theoretical results, we  observe from Table~\ref{Tab:04} 
that although ${\cal Q}^{\rm gl}_N$ performs slightly better, the difference is not significant
and the estimated orders of convergence are roughly the same.

\begin{table}[h]
\centerline{
\begin{tabular}{ |c|c|c|c|c|c| }
\hline
$N$& $\|{\cal Q}_NF_4-F_4\|_{{\cal H}^0}$ &EoC&$\|{\cal
Q}_N^{\rm gl}F_4-F_4\|_{{\cal H}^0}$&EoC\\
\hline 
008& 1.48e-03 &       & 1.41e-03 &       \\ 
016& 1.00e-04 &  3.89 & 9.03e-05 &  3.97 \\ 
032& 6.16e-06 &  4.02 & 5.68e-06 &  3.99 \\ 
064& 3.66e-07 &  4.07 & 3.61e-07 &  3.98 \\ 
128& 2.63e-08 &  3.80 & 2.47e-08 &  3.87   \\
\hline
\end{tabular}}

\ \\[1ex]

\centerline{
\begin{tabular}{ |c|c|c|c|c| }
\hline
$N$& $\|{\cal Q}_NF_4-F_4\|_{{\cal H}^1}$ &EoC &$\|{\cal
Q}_N^{\rm gl}F_4-F_4\|_{{\cal H}^1}$&EoC\\
\hline 
008& 1.43e-02 &       & 1.50e-02 &       \\  
016& 1.88e-03 &  2.93 & 1.90e-03 &  2.99 \\   
032& 2.27e-04 &  3.05 & 2.20e-04 &  3.11 \\  
064& 2.69e-05 &  3.08 & 2.83e-05 &  2.96 \\  
128& 3.82e-06 &  2.82 & 4.26e-06 &  2.73  \\
\hline  
\end{tabular}}

\caption{Approximation of $F_4$ by ${\cal Q}_NF_4$  and 
${\cal Q}^{\rm gl}F_4$:  ${\cal H}^0$  case (top) and
 ${\cal H}^1$ case (bottom). \label{Tab:04}}
\end{table} 

\modificationG{It is important to observe the difference
between the {\em construction} and {\em evaluation} of the interpolation operators in the current article and that in~\cite{DoGa:2012}. After construction
of these two interpolation operators (see Figure~\ref{fig:low-high} for construction CPU time), computing   our two  interpolatory approximations
at various observation points on $\Sp$ requires same basis function evaluations at the points, as they share the same approximation space $\chi_N$. Unlike    standard spherical harmonics based polynomial approximations,  construction and evaluation of our interpolation operators
do not involve Legendre polynomials. Hence  we do not require use of techniques such as fast Legendre transforms for our non-polynomial approximations.
Our basis functions are trigonometric polynomials and hence for evaluation of both the interpolation operators,  standard FFT or NFFT~\cite{nfft}
techniques can be used, depending on whether the observation points are equally
spaced or not. }

The computational  effort required for \modificationG{construction of both the interpolants}  are however important.
In Figure~\ref{fig:low-high}, we show 
the low computational cost of the spherical interpolant constructed in this article 
compared to even the efficient matrix-free interpolant developed in~\cite{DoGa:2012}.
The difference in the performance and computational complexity \modificationG{for construction} of the interpolants  
can be easily explained just by examining both interpolants.   For construction of the  ${\cal Q}_{N} $ based
approximation, we proceed  as follows (see Proposition \ref{prop:01}): first, we apply $N-1$ 
FFT transforms of $2N$ elements, corresponding to odd node indices and next, for even cases, apply  $N+1$ DCT/DST (discrete sine/cosine transform) . 
Thus the overall computational complexity for the ${\cal Q}_{N}$ operator 
approximation  is 
${\cal O}({N}^2\log N)$ operations. For construction of the
 ${\cal Q}_{N}^{\rm gl}$ based approximation, the first step is similar, with  $N-1$ FFT
transforms of $2N$ elements, but the second step (in the non-uniform grid latitudinal angle) is different:  two different (polynomial) interpolation problems of $N+1$ (${\rm{p}}_N^{2m}$) and $N-1$ (${\rm{p}}_N^{2m-1}$) have to be solved  $N$ times which amounts about ${\cal O}(N^3)$ 
operations.  Moreover, the  interpolant ${\cal Q}_N$ developed in this article can 
be used in a natural way with nested grids which can be easily exploited to construct  error estimates almost for free, or, if the data node points are doubled,  previous function evaluations can be reused to construct the associated updated interpolatory approximation.

\begin{center}
\begin{figure}[h]
\includegraphics[width=0.9\textwidth]{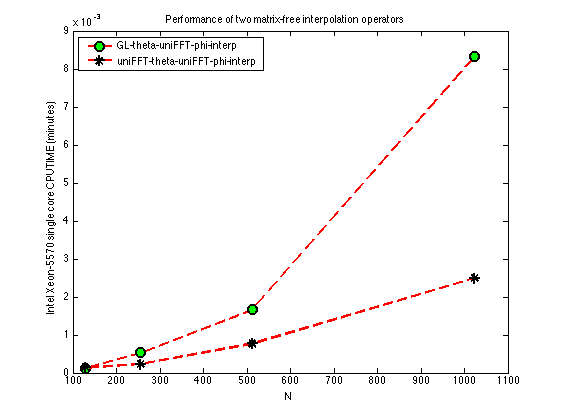} 

\vspace{-0.2in}
\includegraphics[width=0.9\textwidth]{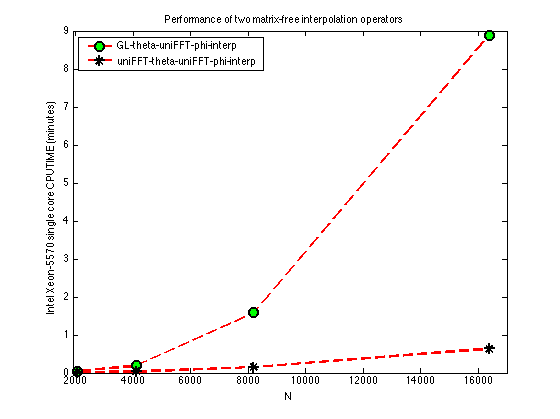} 
\vspace{-0.4in}
\caption{\label{fig:low-high} Performance of the \modificationG{construction} of  operator ${\cal Q}_N$ (*-values) compared
to that of ${\cal Q}_N^{\rm gl}$ (o--values) for various data parameter $N = 2^j$, 
[top: $j = 7, 8, 9, 10$ and bottom:  $j = 11, 12, 13, 14$] applied to the
data obtained using the function $F_4$. The ordinate values are the CPUTIME (in minutes) obtained
using a single core of a Intel Xeon-5570 2.93GHz processor.  The ${\cal Q}_N$ operator facilitates application
of  the standard FFT
in both the azimuthal and latitudinal angles while the  ${\cal Q}_N^{\rm gl}$  with non-uniform 
Gauss-Lobatto based points in the latitudinal angles provides the standard FFT evaluation only
in the azimuthal variable.
}
\end{figure}
\end{center}
 
\clearpage

\subsection*{Experiment \#3 (Performance of the FFT-based cubature)}

Using the smooth integrand  function $F_2$ and for the limited smooth integrand 
$F_4$, we have tested the convergence of the cubature rule developed in Section~\ref{sec:quad} 
to approximate the integral~\eqref{eq:integral}, for $\kappa=0,10,10^2,\ldots,10^6$,
corresponding to the total  
non-, mildly-, and highly-oscillatory integrands. The results  for all these
cases are given in Tables \ref{Tab:06} and
\ref{Tab:07}. 

Each row of Tables \ref{Tab:06} and \ref{Tab:07}  corresponds to a fixed $N$.
We clearly observe the $\kappa^{-2}$ decay of the error, for fixed $N$. Reading along
 the columns corresponds to varying values of $N$. For  the smooth 
 function $F_1$,  based on the column results in Table \ref{Tab:06}, 
 we see the superalgebraic convergence in $N$, as proved in Section~1.
 For a fixed $\kappa$, for the case of the integrand function $F_4$ with
 limited regularity,  we observe from Table~\ref{Tab:07} the convergence 
 in general is better than 
 the estimated theoretical result, suggesting that our estimated error result and
 analysis  could be improved in some cases.

\begin{table}[ht]
 \begin{tabular}{ |c|c|c|c|c|c|c|c|c|}
 \hline 
 $N\setminus\kappa$ & 0         & 1         & 10      & 100       & $10^3$      & $10^4$      & $10^5 $     & $10^6$\\     
 \hline
 004 & 5.36{\rm E-}05  & 1.03{\rm E-}04  & 5.01{\rm E-}05  & 6.77{\rm E-}07  & 4.55{\rm E-}09  & 6.93{\rm E-}11  & 7.22{\rm E-}13  & 6.83{\rm E-}15\\
 008 & 1.17{\rm E-}08  & 1.28{\rm E-}08  & 9.14{\rm E-}07  & 4.28{\rm E-}09  & 5.04{\rm E-}11  & 4.35{\rm E-}13  & 4.24{\rm E-}15  & 4.38{\rm E-}17\\
 016 & 4.44{\rm E-}16  & 1.28{\rm E-}14  & 2.15{\rm E-}13  & 9.89{\rm E-}14  & 7.01{\rm E-}16  & 2.23{\rm E-}18  & 1.79{\rm E-}19  & 6.07{\rm E-}20\\  
\hline
 \end{tabular}
\caption{$|{\cal I}_\kappa(F_1)-{\cal I}_{\kappa,N}(F_1)|$ for various parameters  $\kappa$ and $N$\label{Tab:06}}
\end{table}

\begin{table}[ht]
 \begin{tabular}{ |c|c|c|c|c|c|c|c|c| }
 \hline
$ N\setminus\kappa$ & 0         & 1         & 10      & 100       & $10^3$      & $10^4$      & $10^5 $     & $10^6$\\     
 \hline 
 004 & 1.04{\rm E-}04  & 2.67{\rm E-}03  & 1.72{\rm E-}03  & 2.40{\rm E-}05  & 1.64{\rm E-}07  & 2.48{\rm E-}09  & 2.58{\rm E-}11  & 2.44{\rm E-}13  \\ 
 008 & 7.92{\rm E-}05  & 8.62{\rm E-}05  & 5.75{\rm E-}04  & 3.15{\rm E-}06  & 2.95{\rm E-}08  & 3.08{\rm E-}10  & 3.10{\rm E-}12  & 3.08{\rm E-}14  \\ 
 016 & 4.14{\rm E-}06  & 4.20{\rm E-}06  & 9.47{\rm E-}06  & 1.31{\rm E-}07  & 1.02{\rm E-}09  & 2.01{\rm E-}11  & 2.11{\rm E-}13  & 1.99{\rm E-}15  \\ 
 032 & 2.22{\rm E-}08  & 2.33{\rm E-}08  & 8.44{\rm E-}08  & 2.26{\rm E-}08  & 3.67{\rm E-}11  & 1.00{\rm E-}12  & 1.07{\rm E-}14  & 1.01{\rm E-}16  \\ 
 064 & 2.51{\rm E-}09  & 2.51{\rm E-}09  & 2.73{\rm E-}09  & 1.83{\rm E-}08  & 4.98{\rm E-}12  & 3.78{\rm E-}14  & 4.02{\rm E-}16  & 3.93{\rm E-}18  \\ 
 128 & 1.54{\rm E-}10  & 1.54{\rm E-}10  & 1.56{\rm E-}10  & 4.20{\rm E-}10  & 2.21{\rm E-}12  & 5.97{\rm E-}15  & 1.03{\rm E-}16  & 9.62{\rm E-}19  \\
 256 & 3.33{\rm E-}12  & 3.31{\rm E-}12  & 3.34{\rm E-}12  & 4.93{\rm E-}12  & 4.26{\rm E-}13  & 7.06{\rm E-}16  & 1.11{\rm E-}17  & 2.58{\rm E-}19  \\ 
 \hline
 \end{tabular}
\caption{$ |{\cal I}_\kappa(F_4)-{\cal I}_{\kappa,N}(F_4)| $  for various parameters  $\kappa$ and $N$\label{Tab:07}}
\end{table}

\appendix 
\section{Discussion on  and verification of Hypothesis \ref{hypo:01}}\label{app:hyp_app}

In this section, we present details required to computationally verify the 
inequalities~\eqref{eq:H:00} in 
Hypothesis~\ref{hypo:01} and demonstrate that the hypothesis holds for
almost all practical values of $N$. To this end, we first
rewrite Hypothesis \ref{hypo:01} in a computationally 
convenient form. Using the cosine change of variables $x=\cos\theta$ 
and the Chebyshev polynomial $T_{2N}$,  we rewrite inequality 
~\eqref{eq:H:00} as
\begin{equation}
 \label{eq:A:01} 
 -2 \int_{-1}^1 
| p_{N-2}(x)|^2
(1-x^2)^\alpha T_{2N}(x)\,{\rm d}x\le c_{\rm H}^{\alpha} 
\int_{-1}^1 |p_{N-2}(x)|^2 (1-x^2)^{\alpha}\,{\rm d}x,\quad \forall p_{N-2}\in\mathbb{P}_{N-2},
\end{equation}
 for $\alpha=0,1,2$ where  $c_{\rm H}^{\alpha}  <1$.

Next we consider a polynomial of degree $N-2$ represented using the Chebyshev basis:
\[
 p_{N-2}=\sum_{j=0}^{N-2}\beta_{j+1} T_j.
\]
Then \eqref{eq:A:01}  is equivalent to the coefficient based inequality 
\begin{equation}
 -2\sum_{i,j=1}^{N-1}\beta_i\beta_j b_{ij}(\alpha,N)\le c_{\rm H}^{\alpha}
 \sum_{i,j=1}^{N-1}\beta_i\beta_j a_{ij}(\alpha)
\end{equation}
where
\begin{eqnarray*}
a_{ij}(\alpha)&:=&\int_{-1}^1 T_{i-1}(x) T_{j-1}(x)(1-x^2)^\alpha\,{\rm d}x,\quad \\
b_{ij}(\alpha,N)&:=& \int_{-1}^1 T_{i-1}(x) T_{j-1}(x) T_{2N}(x) (1-x^2)^\alpha\,{\rm d}x. 
\end{eqnarray*}
Observe that these quantities are easily computable using the identities
\begin{subequations}
 \label{eq:A:02} 
\begin{eqnarray}
 T_i T_j&=&\frac{1}2 \big(T_{i+j}+T_{|i-j|}\big)\label{eq:a:A:02} \\
 T_{2N}&=&2T_{N}^2-1\label{eq:b:A:02} \\
 (1-x^2)&=& -\frac{1}2T_2(x)+\frac12,\quad (1-x^2)^2=\frac{1}4T_2^2(x)-\frac{1}2
T_2(x)+\frac14\label{eq:c:A:02} \\
 \int_{-1}^1 T_i(x) \,{\rm d}x&=&\left\{\begin{array}{ll}
                                  -\frac{2}{i^2-1},\ &\text{for even $i$,}      
\\
                                  0,&\text{otherwise.}                         
             
                                         \end{array}
\right. \label{eq:d:A:02}
\end{eqnarray}
\end{subequations}
Thus, for numerical verification of the  Hypothesis~\ref{hypo:01}, we used 
the follow algorithm:
\begin{itemize}
 \item Construct an auxiliary matrix $C$ sufficiently large  with $C_{i+1,j+1}=\int_{-1}^1 T_i T_j$ using
 \eqref{eq:a:A:02} and  \eqref{eq:d:A:02} . 
 \item From $C$, construct $A(\alpha,N):=(a_{ij}(\alpha))_{i,j=1}^{N-1}$ 
 and $B(\alpha,N):=(b_{ij}(\alpha,N))_{i,j=1}^{N-1}$ by  applying
 \eqref{eq:a:A:02}--\eqref{eq:d:A:02}. Hence
 \begin{eqnarray*}
  a_{ij}(0)&=&c_{ij}\\
  a_{ij}(1)&=& -\frac{1}{4} \big[c_{i+2,j}+c_{|i-2|+1,j}\big]+\frac{1}2 c_{ij}\\
  a_{ij}(2)&=&  \frac{1}{16} \big[c_{i+2,j+2}+c_{|i-2|+1,j+2}+c_{i+2,|j-2|+1}+c_{|i-2|+1,|j-2|+1}\big]\\
  &&- 
  \frac{1}4 \big[c_{i+2,j}+c_{|i-2|+1,j}\big]+\frac14c_{ij}\\
    b_{ij}(\alpha,N)&=&  \frac{1}{2}
\big[a_{i+N,j+N}(\alpha)+a_{|i-N|+1,j}(\alpha)+a_{i,|j-N|+1}(\alpha)
    +a_{|i-N|+1,|j-N|+1}(\alpha)\big]
    -  a_{i,j}(\alpha).\\  
 \end{eqnarray*}

 \item Compute the minimum of the generalized Rayleigh quotient for $A$ and $B$ 
 \begin{equation}
 \label{eq:A:03} 
 c_{\rm H}(N;\alpha):=
  -2\min_{\mathbf{b}\in\mathbb{R}^{N-1}} 
  \frac{\mathbf{b}^\top B(\alpha,N)\mathbf{b}}{\mathbf{b}^\top A(\alpha,N) \mathbf{b}}
 \end{equation}

\end{itemize}
Both $A(\alpha,N)$ and $B(\alpha,N)$ are symmetric. Moreover,  $A(\alpha,N)$ is positive definite. Thus, we can compute 
the Cholesky factorization $A=R^\top R$ so that  \eqref{eq:A:03} is equivalent to compute the 
smallest algebraic eigenvalue
of the matrix $R^{-\top} B(\alpha,N)R^{-1}$.

We have implemented the above algorithm by  precomputing $C$ that facilities
acceleration of the algorithm for several values of $N$.  The graphs of 
$c_{\rm H}(\cdot;\alpha)$ for  $2 \leq N \leq 2^{14}$ are depicted in Figure \ref{fig:hyp-low-high}
demonstrating the validity of the hypothesis for most practically useful values of $N$.


\begin{center}
\begin{figure}[h]
\includegraphics[width=0.9\textwidth]{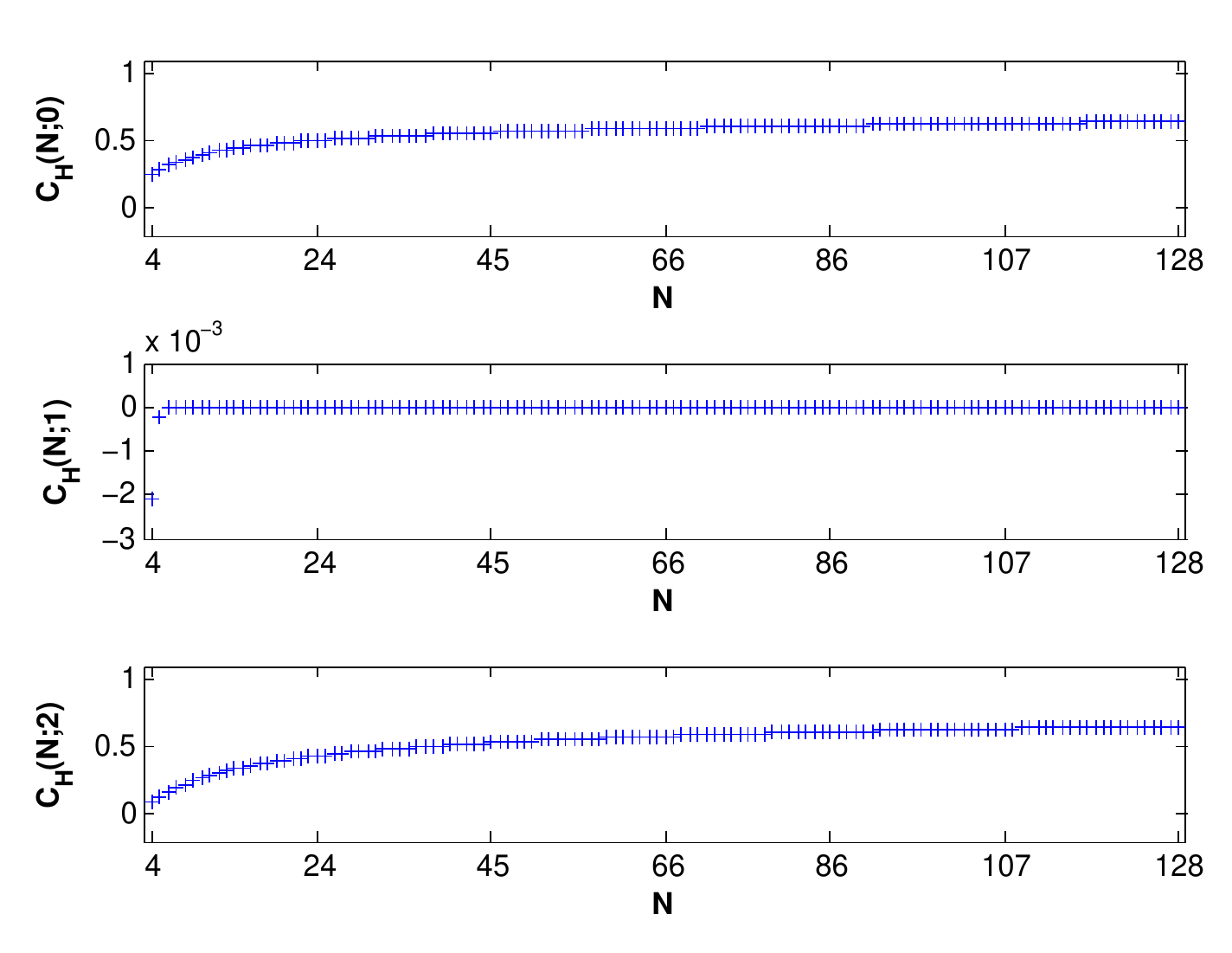} 

\vspace{-0.2in}
\includegraphics[width=0.9\textwidth]{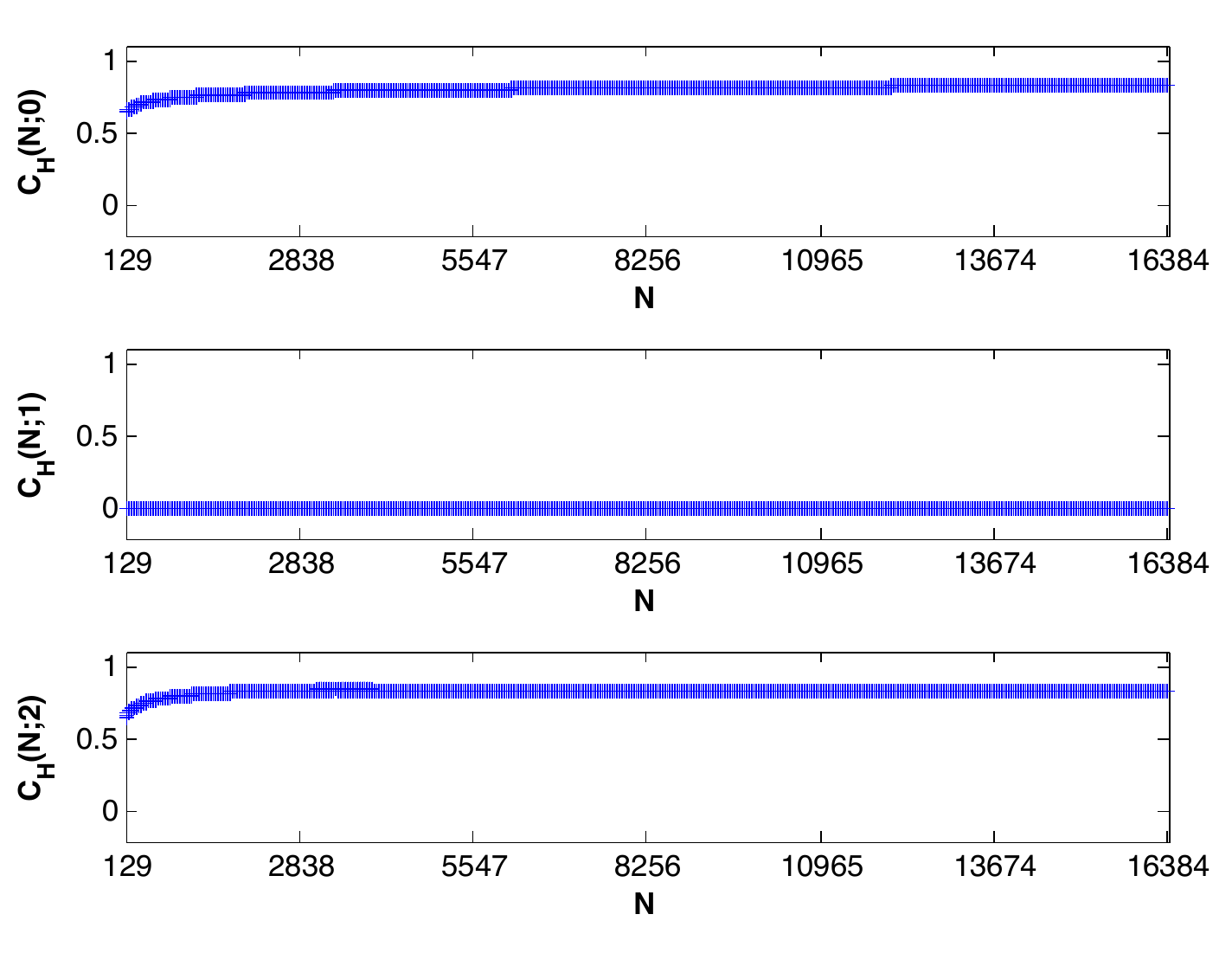} 
\vspace{-0.4in}
\caption{\label{fig:hyp-low-high} 
Plots of computed values of  $c_{\rm H}( N;\alpha)$ defined in~\eqref{eq:A:03} for 
$\alpha=0, 1, 2$, respectively in the first three subplots, for $2 \leq N \leq 2^7$;  and in the 
last three subplots for  $2^7  <  N \leq 2^{14}$. These plots, demonstrating 
$c_{\rm H}( N;\alpha) < 1$, numerically validate (for almost all practical cases)  
that in~\eqref {eq:H:00}, we have $c_{\rm H}^{(j)} <1$, for $j = 1, 2, 3$.
}
\end{figure}
\end{center}
 
\clearpage 

\section{Proofs results in Section~\ref{sec:3}}\label{sec:app_proof_sec_3}
In this section we provide proofs of 
Theorem~\ref{theo:equiv_norms} and Proposition~\ref{prop:inclusion_in_Hr}.

\subsection{Proof of Theorem~\ref{theo:equiv_norms}}
\label{sec:theorem3.1}
\begin{proof} 
First we recall that the space
\[
 {\cal H}^1=\big\{F:\R^2\to\mathbb{C}\ : \ F\in {\cal H}^{0}, \nabla_{\Sp}
F\in
{\cal H}^{0}\times {\cal H}^{0} \big\}
\]
is equipped with the norm
\begin{eqnarray}
 \|F\|_{{\cal H}^1}^2&:=&\frac{1}{4}\|F\|_{{\cal
H}^0}^2+\|\nabla_{\Sp}F\|_{{\cal  H}^0\times {\cal  H}^0}^2\nonumber\\
&=&\frac{1}{4}\int_{0}^\pi\!\int_{0}^{2\pi} |F(\theta,
\phi)|^2\sin\theta\,{\rm d}\theta\nonumber\\
&&+\int_{0}^\pi\!\int_{0}^{2\pi}
\bigg|\frac{\partial
F}{\partial \phi}(\theta,\phi)\bigg|^2\frac{1}{\sin\theta}{\rm
d\theta}\,{\rm d}\phi+\int_{0}^\pi\!\int_{0}^{2\pi}
\bigg|\frac{\partial
F}{\partial \theta}(\theta,\phi)\bigg|^2\ {\sin\theta}\,{\rm d\theta}\,{\rm
d}\phi.  \label{eq:normH1}
\end{eqnarray}
Since, 
\[
|\nabla_{\Sp}
(f\otimes e_m)(\theta,\phi)|^2 =\frac{1}{2\pi}\Big[ \frac{m^2}{\sin^2\theta}
 |f(\theta)|^2  +|f'(\theta)|^2\Big],
\]
we obtain for all $m\in\Z$,
\begin{eqnarray}
  \|f\|_{W^1_m}^2&=& \frac{1}{4}\int_0^{\pi}
|f(\theta)|^2\sin\theta\,{\rm d}\theta+  m^2\int_0^\pi |f(\theta)|^2\frac{{\rm
d}\theta}{\sin\theta} +\int_0^\pi
|f'(\theta)|^2\,\sin\theta\,{\rm d}\theta\label{eq:01:theo:eqWm1Y1}.
\end{eqnarray}
For $|m|\ge 1$, \eqref{eq:01:cor:equivNorms} follows from
\eqref{eq:01:theo:eqWm1Y1}.

Next we prove that if $|m|\ge 2$,
\begin{eqnarray}
  \|f\|_{W^2_m}^2&=&\frac{1}{16}\int_0^\pi
|f(\theta)|^2\sin\theta\,{\rm d}\theta+
\frac{9m^2}{4}\int_0^\pi
|f(\theta)|^2\frac{{\rm d}\theta}{\sin\theta}+
(m^4-4m^2)\int_0^\pi
|f(\theta)|^2\frac{{\rm d}\theta}{\sin^3\theta}\nonumber\\
&&+\frac{1}{4}\int_0^\pi |f'(\theta)|^2\sin\theta\,{\rm
d}\theta+
(1+2m^2)\int_0^\pi |f'(\theta)|^2\frac{{\rm d}\theta}{\sin\theta}+\int_0^\pi
|f''(\theta)|^2\,{\sin \theta}\,{\rm d}\theta. \nonumber \\
\label{eq:02:theo:eqWm1Y1}
\end{eqnarray}
It is convenient to recall that
\[
\|F\|_{{\cal H}^2}^2:=\|\Delta_{\Sp} F\|_{{\cal
H}^0}^2+\frac{1}{2}\|\nabla_{\Sp}
F\|_{{\cal H}^0\times {\cal
H}^0}^2+\frac{1}{16}\|F\|_{{\cal H}^0}^2,\label{eq:normH2}
\]
\modificationG{ Where $\Delta_{\Sp}$ is the Laplace-Beltrami operator on the sphere~\cite{Ne:2001}.}

Without loss of
generality, we  assume $f$ to be a real valued function.
The proof of \eqref{eq:02:theo:eqWm1Y1} requires more calculations and 
application of integration by parts several times to take care of some
cross products appearing in the integral form of the norm. Without
loss of generality,  for a fixed $m \in \Z$, we can assume $f\in {\rm
span}\left\{ Q_n^m\ :  n\ge |m|\right\}$ because this subspace
is dense in $W_m^2$.
Observe that
\begin{equation}\label{eq:f:cancellation}
f(0)=f'(0)=f(\pi)=f'(\pi)=0.
\end{equation}
Since  
\begin{eqnarray*}
\frac{1}{4}\|\nabla_{\Sp} f
\otimes e_{m}\|_{{\cal H}^0\times {\cal
H}^0}^2+\frac{1}{16}\|f\otimes e_{m}\|_{{\cal H}^0}^2&=&
\frac{m^2}{4}\int_0^\pi
f^2(\theta)\frac{{\rm d}\theta}{\sin\theta}+\frac{1}{4}\int_0^\pi
|f'(\theta)|^2{\sin\theta}  \,{\rm d}\theta\\
&&+
 \frac{1}{16}\int_0^\pi
f^2(\theta)\sin\theta\,{\rm d}\theta, 
\end{eqnarray*} it is sufficient  to analyze the  term containing the
Laplace-Beltrami operator:
\begin{eqnarray}
 \|\Delta_{\Sp} (f\otimes e_{m})\|_{{\cal H}^0}^2&=& \int_0^{\pi}
 \int_0^{2\pi}  |\Delta_{\Sp} (f\otimes e_m)(\theta,\phi)|^2\sin\theta\,{\rm
d}\theta\,{\rm d}\phi\nonumber\\
&=& 
\int_0^{\pi}
   \bigg|-\frac{m^2}{\sin^2\theta} f(\theta)+
\frac{1}{\sin\theta}\big|\big(\sin\theta
f'(\theta)\big)'\bigg|^2 \sin\theta\,{\rm
d} \theta\nonumber\\
&=&
 {m^4}\int_0^\pi f^2(\theta)\,\frac{{\rm d}\theta}{\sin^3\theta}+\int_0^\pi
 \frac{1}{\sin\theta}\big|\big(\sin\theta
f'(\theta)\big)'\big|^2\,{\rm d}\theta\nonumber\\
&& -
2m^2 \int_0^\pi f(\theta) \big(\sin\theta
f'(\theta)\big)'  \frac{{\rm
 d}\theta}{\sin^2\theta}  =:m^4 I_1+I_2-2m^2I_3.\
\quad\label{eq:03:theo:eqWm1Y1}
\end{eqnarray} 
Using~\eqref{eq:f:cancellation} and integration by parts, cubature
\begin{eqnarray}
 I_2&=&\int_0^\pi  \Big [ \frac{\cos^2\theta}{\sin\theta} |f'(\theta)|^2\
+\sin\theta|f''(\theta)|^2+\cos\theta
\big(|f'(\theta)|^2\big)'\Big]  \,{\rm d}\theta \nonumber\\
&=&
\int_0^\pi \frac{\cos^2\theta}{\sin\theta} |f'(\theta)|^2\,{\rm
d}\theta+\int_0^\pi \sin\theta|f''(\theta)|^2\,{\rm
d}\theta+\int_0^\pi
 |f'(\theta)|^2\sin\theta \,{\rm d}\theta\nonumber
\\&=&
\int_0^\pi  |f'(\theta)|^2\,\frac{{\rm
d}\theta}{\sin\theta} +\int_0^\pi \sin\theta|f''(\theta)|^2\,{\rm
d}\theta.  \label{eq:04:theo:eqWm1Y1}
\end{eqnarray}
 
Proceeding similarly, we  derive
\begin{eqnarray}
 I_3&=& -\int_0^\pi \Big(\frac{1}{\sin^2\theta} f(\theta)\Big)'
f'(\theta)\sin\theta\,{\rm d}\theta\nonumber\\
&=&-\int_0^\pi |f'(\theta)|^2\frac{\rm d\theta}{\sin\theta}+
\int_0^\pi \big(f^2(\theta)\big)'\frac{\cos\theta}{\sin^2\theta}\,{\rm
d}\theta\nonumber\\
&=&-\int_0^\pi |f'(\theta)|^2\frac{\rm d\theta}{\sin\theta}+
\int_0^\pi f^2(\theta)
\big(\frac{2\cos^2\theta}{\sin^3\theta}+\frac{1}{\sin\theta}\big)\,{\rm
d}\theta\nonumber\\
&=&-\int_0^\pi |f'(\theta)|^2\frac{\rm d\theta}{\sin\theta}- \int_0^\pi
f^2(\theta) \frac{{\rm
d}\theta}{\sin\theta} +
 2\int_0^\pi f^2(\theta) \frac{{\rm
d}\theta}{\sin^3\theta}. \label{eq:05:theo:eqWm1Y1}
\end{eqnarray}
Inserting \eqref{eq:04:theo:eqWm1Y1}-\eqref{eq:05:theo:eqWm1Y1} in
\eqref{eq:03:theo:eqWm1Y1}, we obtain~\eqref{eq:02:theo:eqWm1Y1}.

The inequalities
\[
\frac{1}{16}+\frac{9m^2}{4}+(m^4-4m^2)\le m^4< 3m^4,\quad \frac14+(1+2m^2)\le
3
m^2,\qquad \forall |m|\ge 2
\]
with \eqref{eq:02:theo:eqWm1Y1}  imply the first
inequality of \eqref{eq:02:cor:equivNorms}.
For $|m|\ge 3$ the second inequality of \eqref{eq:02:cor:equivNorms} is simply
a consequence of the inequalities 
\[
m^4-4m^2> m^4/2> \frac{m^4}{6},\quad 1+2m^2> m^2\ge
\frac{m^2}{6}.
\]
The case
$|m|=2$, has to be analyzed 
separately since one of the crucial terms, the third term in
\eqref{eq:02:theo:eqWm1Y1},   vanishes: Using \eqref{eq:02:theo:eqWm1Y1},
we obtain 
\begin{equation}\label{eq:03.5:cor:equivNormns}
\|f\|_{W_{m}^2}^2\ge 9\int_0^\pi |f(\theta)|^2\,\frac{\rm
d\theta}{\sin\theta}+9\int_0^\pi |f'(\theta)|^2\,\frac{\rm d\theta}{\sin\theta}+
\int_0^\pi |f''(\theta)|^2{\sin\theta}\, {\rm d\theta}. 
\end{equation}
 As before it suffices  to consider $f$ to be real valued and
that  $f\in {\rm span}\left\{ Q_n^2 : n \geq 2 \right\}$.
Note that
\begin{equation}\label{eq:04:cor:equivNormns}
 \int_0^\pi f^2(\theta)\frac{\rm d\theta}{\sin^3\theta}=
 \int_0^\pi f^2(\theta)\frac{\rm d\theta}{\sin\theta}+ \int_0^\pi
f^2(\theta)\frac{\cos^2\theta}{\sin^3\theta}{\rm d\theta}.
\end{equation}
Applying integration by parts to the second term and 
using~\eqref{eq:f:cancellation} 
we obtain
\begin{eqnarray} \label{eq:int_parts}
 \int_0^\pi
f^2(\theta)\frac{\cos^2\theta}{\sin^3\theta}{\rm d\theta}&=&
  \int_0^\pi \big(f(\theta)f'(\theta)\big)
\Big(\log(\tan(\theta/2))+\frac{\cos \theta}{\sin^2\theta}\Big)\,{\rm
d}\theta.
\end{eqnarray}
Notice that for $\theta\in(0,\pi/2]$,
\begin{equation}\label{eq:bound:log}
\sin\theta |\log \tan(\theta/2)|\le 2\tan(\theta/2) |\log \tan(\theta/2) |
 \le 2 e^{-1}\le 1.
\end{equation}
By symmetry, we can extend this bound for any $\theta\in(0,\pi)$.
With the help of \eqref{eq:bound:log} and the inequality  $2ab\le a^2+b^2$, 
from~\eqref{eq:int_parts}  we obtain
\begin{eqnarray}
 \int_0^\pi
f^2(\theta)\frac{\cos^2\theta}{\sin^3\theta}{\rm d\theta}&\le &
  \int_0^\pi   \left| f(\theta)f'(\theta)\right|  \,\frac{{\rm
d}\theta}{\sin\theta}+   \int_0^\pi   \left| f(\theta)f'(\theta)\right| \frac{{\rm
d}\theta}{\sin^2\theta}\nonumber\\
&\le& \frac12\bigg[\int_0^\pi   f^2 (\theta)  \,\frac{{\rm
d}\theta}{\sin\theta} +2\int_0^\pi  |f'(\theta)|^2  \frac{{\rm
d}\theta}{\sin\theta}\bigg] +     \frac{1}2\int_0^\pi   f^2(\theta) \frac{{\rm
d}\theta}{\sin^3\theta}. \qquad\label{eq:05:cor:equivNormns}
\end{eqnarray}
Inserting \eqref{eq:05:cor:equivNormns} in \eqref{eq:04:cor:equivNormns} we
easily  derive
\[
 \int_0^\pi
f^2(\theta)\frac{\rm d\theta}{\sin^3\theta}\le \frac{3}2\int_0^\pi  
f^2(\theta) \,\frac{{\rm
d}\theta}{\sin\theta}+\int_0^\pi  
|f'(\theta)|^2 \,\frac{{\rm
d}\theta}{\sin\theta}+\frac{1}2\int_0^\pi 
f^2(\theta)\frac{\rm d\theta}{\sin^3\theta}
\]
and therefore
\begin{eqnarray}
 \int_0^\pi f^2(\theta)\frac{\rm d\theta}{\sin^3\theta}&\le& 3
\int_0^\pi
f^2(\theta)\frac{\rm d\theta}{\sin\theta}+2\int_0^\pi
|f'(\theta)|^2\frac{{\rm
d}\theta}{\sin\theta}.\label{eq:06:cor:equivNormns}
\end{eqnarray}
From \eqref{eq:03.5:cor:equivNormns} and
\eqref{eq:06:cor:equivNormns}, we   obtain 
\begin{eqnarray*}
 6\|f\|^2_{W_{m}^2}&\ge& 54\int_0^\pi f^2(\theta)\,\frac{\rm
d\theta}{\sin\theta}+54\int_0^\pi |f'(\theta)|^2\,\frac{\rm
d\theta}{\sin\theta}+6
\int_0^\pi |f''(\theta)|^2{\sin\theta}\, {\rm d\theta}\\
&\ge&16\bigg( 3 \int_0^\pi
f^2(\theta)\frac{\rm d\theta}{\sin\theta}+2\int_0^\pi
|f'(\theta)|^2\frac{{\rm
d}\theta}{\sin\theta}\bigg)\\
&&+4\int_0^\pi
|f'(\theta)|^2\frac{{\rm
d}\theta}{\sin\theta}+
\int_0^\pi |f''(\theta)|^2{\sin\theta}\, {\rm d\theta}\\
&\ge&16\int_0^\pi
|f (\theta)|^2\frac{{\rm
d}\theta}{\sin^3\theta}+4\int_0^\pi
|f'(\theta)|^2\frac{{\rm
d}\theta}{\sin\theta}+
\int_0^\pi |f''(\theta)|^2{\sin\theta}\, {\rm d\theta}.
\end{eqnarray*}

Hence  the inequalities in~\eqref{eq:02:cor:equivNorms} hold.
\end{proof}

\subsection{Proof of Proposition~\ref{prop:inclusion_in_Hr}}
\label{sec:proposition3.2}
\begin{proof} 
Denote by $\Gamma$ the maximum circle in $\Sp$, parametrized by
 \begin{equation}
 \label{eq:chi}
\q(\theta):=(\sin\theta, 0 ,\cos\theta).
 \end{equation}
Given $f^\circ:\Gamma\to \mathbb{C}$   we denote   $f=f^\circ\circ 
\q:\R\to\mathbb{C}$.   The norm  in the Sobolev space  $H^r(\Gamma)$  can 
be then
defined with the help of $\q$ and~\eqref{eq:SobolevNorm}: 
\[
\|f^\circ\|_{H^r(\Gamma)}:=\|f\|_{H_{\#}^r}.
\]

The second ingredient we will use in this proof is the trace
operator $\gamma_\Gamma$ which can be shown to be  continuous from ${\cal
H}^{r+1/2}(\Sp)$ onto $H^{r}(\Gamma)$ for all
$r>0$ (see \cite{MR937473, McLean:2000} for a proof of this result in $\R^n$;
the proof can be easily extended by using local charts of the unit sphere and
the equivalent definitions of the Sobolev spaces involved).

Given $f\in W_m^r$, consider the mapping
\[
 {\cal P}_m f:=\sqrt{2\pi}(\gamma_\Gamma F^\circ )\circ \q, \qquad
F^\circ:=(f\otimes e_m) \circ \p^{-1}.
\]
Observe that $F^\circ\in {\cal H}^r(\Sp)$ and that actually $f= {\cal P}_m f$,
that is ${\cal P}_m$ is simply the identity operator. 
Moreover,
\[
\| {\cal P}_m f\|_{H_\#^r} \le
{\sqrt{2\pi}}\|\gamma_\Gamma\|_{{\cal H}^{r+1/2}(\Sp)\to
H^r(\Gamma)}\|F^\circ\|_{{\cal
H}^{r+1/2}(\Sp)}={\sqrt{2\pi}}\|\gamma_\Gamma\|_{{\cal H}^{r+1/2}(\Sp)\to
H^r(\Gamma)} \|f\|_{W_m^{r+1/2}},
\]
where $\|\gamma_\Gamma\|_{{\cal H}^{r+1/2}(\Sp)\to
H^r(\Gamma)}$ is the continuity constant of $\gamma_\Gamma$ as a linear  
operator from
${\cal  H}^{r+1/2}(\Sp)$ onto ${H^{r}(\Gamma)}$. 
Hence we obtain \eqref{eq:01:prop:inclusion_in_Hr}.

Since $W_m^0\cong
L^2_{\sin}$,  the first equation in \eqref{eq:03a:prop:inclusion_in_Hr} is
clear whereas  the second equation in \eqref{eq:03b:prop:inclusion_in_Hr}  for $m=0$ follows directly
from
 \eqref{eq:01:theo:eqWm1Y1} and \eqref{eq:SobolevNorm:2}.
Finally, if $m\ne 0$   using $f(0)=0$ 
we observe that
\begin{eqnarray*}
 \int_0^{\pi/2}|f(\theta)|^2\frac{\rm d\theta}{\rm \sin\theta}&=&
 \int_0^{\pi/2}\frac1{\sin \theta}\bigg|\int_0^\theta f'(\xi)\,{\rm d}\xi
\bigg|^2 {\rm d \theta}\le \int_0^{\pi/2} \frac{\sqrt{\theta}}{\sin
\theta}\bigg[
\int_0^\theta |f'(\xi)|^2\,{\rm d}\xi\bigg]\,{\rm d}\theta\\
&\le& C \int_0^{\pi/2} |f'(\xi)|^2\,{\rm
d}\xi.
\end{eqnarray*} 
Proceeding similarly, but using now that
$f(\pi)=0$,
we can bound   the integral in $(\pi/2,\pi)$ and  hence  conclude that 
\[
  \int_0^{\pi}|f(\theta)|^2\frac{\rm d\theta}{\rm \sin\theta}\le C
\int_0^{\pi} |f'(\xi)|^2\,{\rm
d}\xi. 
\]
Equation \eqref{eq:01:cor:equivNorms}  now yields that
\[
 \|f\|_{W_m^1}\le\frac{\sqrt{5}}2\|f\|_{{Z_m^1}}\le C(1+|m|)\|f\|_{H_\#^1}. 
\]

\end{proof}

\section*{Acknowledgment}

The research of the first author was supported, in part, by grant DMS-1216889
from the National Science Foundation and by Ministerio de Econom\'{\i}a y Competitividad through the grant MTM2014-52859.  Support of the Colorado Golden Energy
Computing Organization (GECO) is gratefully acknowledged.

\bibliography{biblio}

\end{document}